\newtheorem{theorem}{Theorem}
\newtheorem{corollary}{Corollary}
\newtheorem{definition}{Definition}
\newtheorem{lemma}{Lemma}
\newtheorem{remark}{Remark}
\newcommand{\eps}{\varepsilon}
\DeclareMathOperator{\RE}{Re}
\DeclareMathOperator{\Limsup}{Lim\;sup}
\DeclareMathOperator{\Graph}{Gr}
\DeclareMathOperator{\fix}{Fix}
\DeclareMathOperator{\F}{F}
\DeclareMathOperator{\uu}{U}
\DeclareMathOperator{\A}{A}
\DeclareMathOperator{\co}{co}
\DeclareMathOperator{\dist}{dist}
\DeclareMathOperator{\spn}{span}
\newcommand{\w}{\tilde}
\newcommand{\map}{\multimap}
\newcommand{\<}{\leqslant}
\newcommand{\n}{{n\geqslant 1}}
\newcommand{\K}{{k\geqslant 1}}
\newcommand{\z}[1]{(#1)_{n=1}^\infty}
\newcommand{\x}[1]{\{#1\}_{n=1}^\infty}
\newcommand{\R}[1]{\varmathbb{R}^{#1}}
\newcommand{\f}{\left}
\newcommand{\g}{\right}
\newcommand{\res}[2]{#1\:\rule[-1.5mm]{0.45pt}{4mm}\,\rule[-1mm]{0mm}{4mm}_{#2}}
\begin{document}
\title[Integrated solutions of non-densely defined integro-differential inclusions]{Integrated solutions of non-densely defined semilinear integro-differential inclusions: existence, topology and applications}
\author{Rados\l aw Pietkun}
\subjclass[2010]{34A12, 34A60, 47D62, 47H04, 47H08, 47H10}
\keywords{acyclic set; admissible map; convergence theorem; De Blasi measure of noncompactness; fixed point theorem; integrated semigroup; integrated solution; $R_\delta$-set; semilinear integro-differential inclusion}
\address{Toru\'n, Poland}
\email{rpietkun@pm.me}

\begin{abstract}
Given a linear closed but not necessarily densely defined operator $A$ on a Banach space $E$ with nonempty resolvent set and a multivalued map $F\colon I\times E\map E$ with weakly sequentially closed graph, we consider the integro-differential inclusion 
\begin{center}
$\dot{u}\in Au+F(t,\int u)\;\;\text{on }I,\;\;u(0)=x_0.$
\end{center}
We focus on the case when $A$ generates an integrated semigroup and obtain existence of integrated solutions in the sense of \cite[Def.6.4.]{thieme} if $E$ is weakly compactly generated and $F$ satisfies \[\beta(F(t,\Omega))\<\eta(t)\beta(\Omega)\;\;\text{for all bounded }\Omega\subset E,\] where $\eta\in L^1(I)$ and $\beta$ denotes the De Blasi measure of noncompactness. When $E$ is separable, we are able to show that the set of all integrated solutions is a compact $R_\delta$-subset of the space $C(I,E)$ endowed with the weak topology. We use this result to investigate a nonlocal Cauchy problem described by means of a nonconvex-valued boundary condition operator. Some applications to partial differential equations with multivalued terms are also included.
\end{abstract}

\maketitle

\section{Introduction and Notation}
The aim of this paper is the study of the following integro-differential inclusion in the Banach space~$E$:
\begin{equation}\label{P}
\begin{dcases*}
\dot{u}(t)\in Au(t)+F\f(t,\int_0^tu(s)\,ds\g)&on $I:=[0,T]$,\\
u(0)=x_0,
\end{dcases*}
\end{equation}
where $A\colon D(A)\subset E\to E$ is a linear closed operator, $F\colon I\times E\map E$ is a multivalued perturbation and $x_0\in E$ is given.\par The generic type of the semilinear differential inclusion, given by
\begin{equation}\label{2P}
\begin{dcases*}
\dot{u}(t)\in Au(t)+F(t,u(t))&on $I$,\\
u(0)=x_0,
\end{dcases*}
\end{equation}
with $A$ being the infinitesimal generator of a $0$-times integrated semigroup is thoroughly examined in the literature. The basic theory of methods with applications to semilinear differential inclusions of the type \eqref{2P} and methods of measures of non-compactness can be found in the monograph \cite{zecca}.\par As one knows, the domain $D(A)$ of a strongly continuous semigroup generator must be dense in $E$. However, a concept introduced in the eighties by Arendt (\cite{arendt}) allow to extend the theory to the case of abstract Cauchy problems with operators which do not satisfy the Hille-Yosida conditions. The main idea behind this notion can be summarized as follows: Let $\{U_t\}_{t\geqslant 0}$ be a $C_0$-semigroup on $E$. Then $S(t):=\int_0^tU(s)\,ds$ defines a family $\{S(t)\}_{t\geqslant 0}$ of bounded operators having the following three properties:
\begin{itemize}
\item[$($i$)$] $S(0)=0$,
\item[$($ii$)$] $t\mapsto S(t)$ is strongly continuous,
\item[$($iii$)$] $S(s)S(t)=\int_0^s(S(r+t)-S(t))\,dr$.
\end{itemize}
We call an {\it integrated semigroup} an operator family satisfying (i)-(iii) (for more information about defined notion, please refer to \cite{hieber, neu, thieme}). The generator $A$ of an integrated semigroup $\{S(t)\}_{t\geqslant 0}$ poses an example of such a linear operator that does not meet the Hille-Yosida conditions.\par In order to find a solution $u\in C(I,D(A))$, which is differentiable and satisfies 
\begin{equation}\label{P2}
\begin{dcases*}
\dot{u}(t)=Au(t)+f(t)&on $I$,\\
u(0)=x_0,
\end{dcases*}
\end{equation}
one usually must impose a lot of smoothness both on $x_0$ ($x_0\in D(A)$, $x_0\in D(A^2)$) and on $f$, either in the form of temporal regularity (i.e. $f\in W^{1,p}(I,E)$) or spatial regularity (i.e. $f(t)$ is supposed to belong to $D(A)$ a.e. on $I$). Without this additional regularity assumptions one considers problem \eqref{P2} in a generalized sense, which is suggested by the formal integration of both sides of \eqref{P2}. In this case we are dealing with {\it integral solutions} in the sense of Da Prato and Sinestrari (\cite{prato}): \[u(t)=x_0+A\int_0^tu(s)\,ds+\int_0^tf(s)\,ds,\;\;t\in I,\] which means in particular that $\int_0^tu(s)\,ds\in D(A)$. The authors of \cite{obuh} obtained solutions of the initial value problem \eqref{2P} in the latter sense under the following assumptions:
\begin{itemize}
\item[(i)] $A$ is the generator of a locally Lipschitz continuous non-degenerate exponentially bounded integrated semigroup $\{S(t)\}_{t\geqslant 0}$, whose derivative $\{S'(t)\}_{t\geqslant 0}$ forms an equicontinuous semigroup,
\item[(ii)] the set-valued perturbation term $F$ is a convex compact valued upper-Carath\'eodory multimap possessing the usual sublinear growth and condensing with respect to the Hausdorff measure of non-compactness.
\end{itemize}
One easily deduces that if an integral solution of \eqref{P2} exists then necessarily $x_0\in\overline{D(A)}$. If we want to relax smoothness condition for $x_0$ even more, we can integrate \eqref{P2} twice. The latter approach motivates the following definition (compare \cite[Def.6.4.]{thieme}): 
\begin{definition}
By an integrated solution of the problem \eqref{P} we mean a continuous function $u\colon I\to E$ such that 
\begin{equation}
\begin{dcases*}
\int_0^tu(s)\,ds\in D(A),\\
u(t)\in tx_0+A\!\!\int_0^tu(s)\,ds+\int_0^t(t-s)F(s,u(s))\,ds&for $t\in I$,
\end{dcases*}
\end{equation}
where the last integral on the right is understood in the sense of Aumann.
\end{definition}
\par The main results of our paper are theorems regarding the existence of integrated solutions of the problem \eqref{P} and the topological characterization of their set, in the situation where operator A is a generator of a non-degenerate exponentially bounded integrated semigroup and the multivalued perturbation term has weakly sequentially closed graph. To avoid compactness assumptions, the weak topology and the notion of the De Blasi measure of noncompactness is employed. Exploitation of Theorem 2.8. from \cite{kunze}, on the behaviour of the measure $\beta$ with respect to integration, allowed us to formulate the results also in the context of non-reflexive Banach spaces. \par In Section 2. we present some important, from the technical point of view, generalizations of the result known in the literature as the Convergence Theorem. Section 3. contains the aforesaid main results of the paper (Theorem \ref{existence}. and Theorem \ref{solset}.). Consequences of the previously described geometric structure of the set of integrated solutions to the Cauchy problem \eqref{P} has been collected in Section 4. in the form of theorems and examples illustrating the use of Theorem \ref{solset}.\par Let us introduce some notations which will be used in this paper.\par Let $(E,|\cdot|)$ be a Banach space, $E^*$ its normed dual and $\sigma(E,E^*)$ its weak topology. If $M$ is a subset of a Banach space $E$, by $(M,w)$ we denote the topological space $M$ furnished with the relative weak topology of $E$. \par The normed space of bounded linear operators $S\colon E\to E$ is denoted by $\mathcal{L}(E)$. Given $S\in\mathcal{L}(E)$, $||S||_{{\mathcal L}}$ is the norm of $S$. For any $\eps>0$ and $A\subset E$, $B(A,\eps)$ ($D(A,\eps)$) stands for an open (closed) $\eps$-neighbourhood of the set $A$ ($D_C(0,R)$ represents the ball in the space of continuous functions). If $x\in E$ we put $\dist(x,A):=\inf\{|x-y|\colon y\in A\}$. Besides, for two nonempty closed bounded subsets $A$, $B$ of $E$ the symbol $h(A,B)$ stands for the Hausdorff distance from $A$ to $B$, i.e. $h(A,B):=\max\{\sup\{\dist(x,B)\colon x\in A\},\sup\{\dist(y,A)\colon y\in B\}\}$.\par We use symbols of functional spaces, such as $C(I,E)$, $L^p(I,E)$, $L^\infty(I,E)$, $H^2(\R{n})$, $L^2(\R{n})$, in their commonly accepted meaning. Symbols $||\cdot||$, $||\cdot||_p$ represent norms in the space $C(I,E)$ and $L^p(I,E)$, respectively.\par Given metric space X, a set-valued map $F\colon X\map E$ assigns to any $x \in X$ a nonempty subset $F(x)\subset E$. $F$ is (weakly) upper semicontinuous, if the small inverse image $F^{-1}(A)=\{x\in X\colon F(x)\subset A\}$ is open in $X$ whenever $A$ is (weakly) open in $E$. We say that $F\colon X\map E$ is upper hemicontinuous if for each $x^*\in E^*$, the function $\sigma(x^*,F(\cdot))\colon X\to\R{}\cup\{+\infty\}$ is upper semicontinuous (as an extended real function), where $\sigma(x^*,F(x))=\sup\limits_{y\in F(x)}\langle x^*,y\rangle$. We have the following characterization: a map $F\colon X\map E$ with convex values is weakly upper semicontinues and has weakly compact values iff given a sequence $(x_n,y_n)$ in the graph $\Graph(F)$ of map $F$ with $x_n\xrightarrow[n\to\infty]{X}x$, there is a subsequence $y_{k_n}\xrightharpoonup[n\to\infty]{E}y\in F(x)$ ($\rightharpoonup$ denotes the weak convergence).\par Let $H^\ast(\cdot)$ denote the Alexander-Spanier cohomology functor with coefficients in the field of rational numbers ${\mathbb Q}$ (see \cite{spanier}). We say that a nonempty topological space $X$ is acyclic if the reduced cohomology $\w{H}^q(X)$ is $0$ for any $q\geqslant 0$. A nonempty compact metric space $X$ is an $R_\delta$-set if it is the intersection of a decreasing sequence of compact contractible metric spaces. In particular, $R_\delta$-sets are acyclic.\par An upper semicontinuous map $F\colon E\map E$ is called acyclic if it has compact acyclic values. A set-valued map $F\colon E\map E$ is admissible (in the sense of \cite[Def.40.1]{gorn}) if there is a Hausdorff topological space $\Gamma$ and two continuous functions $p,q\colon\Gamma\to E$ such that $F(x)=q(p^{-1}(x))$ for every $x\in E$ with $p$ being a surjective perfect map with acyclic fibers. Clearly, every acyclic map is admissible. Moreover, the composition of admissible maps is admissible (\cite[Th.40.6]{gorn}).
\par A real function $\beta$ defined on the family of bounded subsets $\Omega$ of $E$ defined by the formulae \[\beta(\Omega):=\inf\f\{\eps>0\colon\Omega\text{ has a weakly compact }\eps\text{-net in }E\vphantom{\int_0^t}\g\}\] is called the De Blasi measure of noncompactness. Recall that $\beta$ is a measure of noncompactness in the sense of general definition provided $E$ is endowed with the weak topology. One can readily verify that the MNC $\beta$ is regular, monotone, nonsingular, semi-additive, algebraically semi-additive and invariant under translation (see \cite{blasi}). \par We recall the reader following results on account of their practical importance. The first is a weak compactness criterion in $L^p(\Omega,E)$, which originates from \cite{ulger}.

\begin{theorem}[\protect{\cite[Cor.9]{ulger}}]\label{ulger}
Let $(\Omega,\Sigma,\mu)$ be a finite measure space with $\mu$ being a nonatomic measure on $\Sigma$. Let $A$ be a uniformly $p$-integrable subset of $L^p(\Omega,E)$ with $p\in[1,\infty)$. Assume that for a.a. $\omega\in\Omega$, the set $\{f(\omega)\colon f\in A\}$ is relatively weakly compact in $E$. Then $A$ is relatively weakly compact.
\end{theorem}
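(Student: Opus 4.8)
I would prove the theorem in its sequential form: by the Eberlein–Šmulian theorem it is enough to show that every sequence $(f_n)$ in $A$ has a subsequence converging weakly in $L^p(\Omega,E)$, and I would produce the weak limit by marrying the scalar Dunford–Pettis theorem to the pointwise hypothesis, the latter exploited through a Young-measure (disintegration) construction. The first step is a routine reduction to separable range: each $f_n$, being strongly measurable, is essentially separably valued, so the closed linear span $E_0$ of the (countable) union of the essential ranges of the $f_n$ is a separable closed subspace of $E$ with $(f_n)\subset L^p(\Omega,E_0)$; since the canonical inclusion $L^p(\Omega,E_0)\hookrightarrow L^p(\Omega,E)$ is norm-to-norm, hence weak-to-weak, continuous, relative weak compactness in $L^p(\Omega,E_0)$ implies it in $L^p(\Omega,E)$, while uniform $p$-integrability and the pointwise relative weak compactness of $\{f_n(\omega)\}$ are intrinsic and descend to the subspace. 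Hence one may assume $E$ separable.

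Next, uniform $p$-integrability says exactly that $\{|f_n|^p:n\geqslant1\}$ is bounded and uniformly integrable in $L^1(\Omega)$, hence relatively weakly compact there by the Dunford–Pettis theorem; passing to a subsequence I arrange $|f_n|^p\rightharpoonup\psi$ in $L^1(\Omega)$ for some $\psi\geqslant0$. By hypothesis there is a $\mu$-null set off which $K(\omega):=\overline{\{f_n(\omega):n\geqslant1\}}^{\,w}$ is weakly compact; as $E$ is separable, the weak topology on $\bigcup_\omega K(\omega)$ is governed by a single countable subset of $E^*$ and is metrizable on each $K(\omega)$. The fundamental theorem on Young measures — applicable precisely because, for a.e.\ $\omega$, the values $f_n(\omega)$ lie in the weakly compact (hence tight) set $K(\omega)$ — then yields, after a further subsequence, a $\Sigma$-measurable family $(\nu_\omega)_{\omega\in\Omega}$ of Borel probability measures with $\nu_\omega$ carried by $K(\omega)$, such that for every $x^*\in E^*$ and every $\varphi\in L^\infty(\Omega)$, as $n\to\infty$,
\[
\int_\Omega\varphi(\omega)\langle x^*,f_n(\omega)\rangle\,d\mu(\omega)\longrightarrow\int_\Omega\varphi(\omega)\left(\int_E\langle x^*,x\rangle\,d\nu_\omega(x)\right)d\mu(\omega).
\]
I then set $f(\omega):=\int_E x\,d\nu_\omega(x)$, the barycentre of $\nu_\omega$, which lies in the weakly compact convex set $\overline{\co}^{\,w}K(\omega)\subset E_0$. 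Being weakly measurable and essentially separably valued, $f$ is strongly measurable by the Pettis measurability theorem; Jensen's inequality gives $|f(\omega)|^p\leqslant\int_E|x|^p\,d\nu_\omega(x)$ pointwise, and the Young-measure lower semicontinuity bound $\int_\Omega\int_E|x|^p\,d\nu_\omega(x)\,d\mu\leqslant\liminf_n\|f_n\|_p^p<\infty$ shows $f\in L^p(\Omega,E)$.

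It remains to upgrade the convergence above to $f_n\rightharpoonup f$ in $L^p(\Omega,E)$, i.e.\ $\int_\Omega\langle g(\omega),f_n(\omega)\rangle\,d\mu\to\int_\Omega\langle g(\omega),f(\omega)\rangle\,d\mu$ for every $g$ in the dual $L^p(\Omega,E)^*\cong L^{p'}_{w^*}(\Omega,E^*)$ of weak$^*$-measurable $E^*$-valued functions with $\|g(\cdot)\|\in L^{p'}(\Omega)$, where $p'$ is conjugate to $p$. For $g$ a simple $E^*$-valued function this is exactly the convergence just established. For general $g$ I would localize via Vitali's theorem: using the uniform $p$-integrability of $\{f_n\}\cup\{f\}$ and the $L^{p'}$-integrability of $\|g(\cdot)\|$, the contribution of the sets $\{\|g\|>R\}\cup\{|f_n|>R\}\cup\{|f|>R\}$ to both integrals is uniformly small, while on the complement everything is uniformly bounded and there $g$ — of which only the restriction to the separable space $E_0$ is relevant — may be approximated, in the integrated weak$^*$ sense and using the weak$^*$-metrizability of bounded subsets of $E_0^*$, by simple functions. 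Passing to the limit in the approximations gives the desired convergence, completing the proof.

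I expect the main obstacle to be this last step. Since the dual of $L^p(\Omega,E)$ is $L^{p'}_{w^*}(\Omega,E^*)$ and $E^*$ need not be separable, simple $E^*$-valued functions are in general not norm-dense in it, so the weak limit cannot be identified by a crude density argument against a dense family of functionals; the way around is to exploit that the entire sequence is confined to the fixed separable subspace $E_0$, which lets one replace $g$ by a weak$^*$-measurable $E_0^*$-valued map and then use weak$^*$-metrizability on bounded sets, with the Vitali/uniform-integrability control — coming directly from the hypothesis of uniform $p$-integrability — supplying the tightness that makes the truncation and approximation rigorous. By comparison, the construction and $\Sigma$-measurability of the Young measure and the verification that its barycentre lies in $L^p(\Omega,E)$, while demanding some care with measurable selections, are essentially routine.
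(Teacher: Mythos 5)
First, a point of order: the paper offers no proof of this statement. It is quoted verbatim as Corollary~9 of \"Ulger's article \cite{ulger} and used as a black box, so there is no internal argument to compare yours against; what follows is an assessment of your proposal on its own terms.

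Your overall strategy (reduce to separable range, extract a Young measure, take barycentres) is a legitimate known route to this theorem --- it is essentially Balder's Young-measure proof of the \"Ulger/Diestel--Ruess--Schachermayer compactness criterion --- and the reduction, the extraction of $(\nu_\omega)$, and the verification that the barycentre $f$ lies in $L^p(\Omega,E)$ are all sound. The genuine gap is in the step you yourself flag as the main obstacle, and your proposed fix does not close it. After truncation you must show that $\sup_n\bigl|\int_\Omega\langle g(\omega)-g_k(\omega),f_n(\omega)-f(\omega)\rangle\,d\mu\bigr|\to 0$ as $k\to\infty$, where the $g_k$ are simple, uniformly bounded, and $g_k(\omega)\to g(\omega)$ weak$^*$ in $E_0^*$ a.e. For each \emph{fixed} $n$ this integral tends to $0$ by dominated convergence, but you need smallness uniformly in $n$, which would require $\sup_{x\in K(\omega)}|\langle g(\omega)-g_k(\omega),x\rangle|\to 0$; and weak$^*$ convergence of a bounded sequence of functionals is \emph{not} uniform on weakly compact sets (in $\ell^2$ the coordinate functionals satisfy $e_k\rightharpoonup 0$, yet $\sup_{|x|\leqslant 1}|\langle e_k,x\rangle|=1$). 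So the error term $\limsup_n|\int\langle g-g_k,f_n-f\rangle\,d\mu|$ is simply not controlled by your argument; norm-density of simple functions in $L^{p'}_{w^*}(\Omega,E^*)$ fails and your pointwise weak$^*$ approximation is not an adequate substitute. To repair the proof you must either (a) invoke the fundamental theorem of Young measures in its full Suslin-space form, which gives $\int h(\omega,f_n(\omega))\,d\mu\to\int\!\!\int h(\omega,x)\,d\nu_\omega(x)\,d\mu$ for every Carath\'eodory integrand $h$ with uniformly integrable growth, applied directly to $h(\omega,x)=\langle g(\omega),x\rangle$ --- but then essentially the whole difficulty has been outsourced to that theorem, whose proof in the weak-topology setting is of comparable depth to the statement being proved; or (b) upgrade to a.e.\ weak convergence $f_n(\omega)\rightharpoonup f(\omega)$, after which Vitali's theorem finishes immediately --- but the Young-measure limit yields this only when the $\nu_\omega$ are Dirac measures; or (c) fall back on \"Ulger's convex-combination criterion, which is the actual content of the cited source. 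As written, the final step fails.
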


The next theorems are two well-known results from the scope of topological fixed point theory, our proofs could not do without.
\begin{theorem}[\protect{\cite[Th.7.4]{gorn2}}]\label{Lefschetz}
Let $X$ be an absolute extensor for the class of compact metrizable spaces and $F\colon X\map X$ be an admissible map such that $F(X)$ is contained in a compact metrizable subset of $X$. Then $F$ has a fixed point.
\end{theorem}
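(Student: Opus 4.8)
The plan is to conjugate $F$ to an admissible self-map of the Hilbert cube, where the required fixed point theory is classical, and then transport the fixed point back to $X$. First I would use the hypothesis to choose a compact metrizable set $K\subset X$ with $F(X)\subset K$, and fix an embedding $e\colon K\to Q$ into the Hilbert cube $Q$; then $e(K)$ is a closed subset of $Q$ and $e^{-1}\colon e(K)\to X$ is continuous. Since $X$ is an absolute extensor for the class of compact metrizable spaces and $Q$ belongs to that class, $e^{-1}$ extends to a continuous map $g\colon Q\to X$.

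Next I would form the composition $\Phi:=e\circ F\circ g\colon Q\map Q$, which is well defined because $F(g(Q))\subset F(X)\subset K=\mathrm{dom}\,e$, and whose image lies in the compact set $e(K)$. The maps $e$ and $g$, being single-valued and continuous, are acyclic and hence admissible; since $F$ is admissible, $\Phi$ is admissible by the composition property quoted above (\cite[Th.40.6]{gorn}). As $Q$ is a compact absolute retract, its reduced Alexander--Spanier cohomology vanishes, so for any representation $\Phi=q\circ p^{-1}$ as in the definition of admissibility the endomorphism $q^{\ast}\circ(p^{\ast})^{-1}$ induced on $H^{\ast}(Q)$ is the identity on $H^{0}(Q)\cong{\mathbb Q}$ and is trivial in positive degrees; consequently the generalized Lefschetz number of $\Phi$ equals $1\neq0$. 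The Lefschetz fixed point theorem for admissible compact-valued self-maps of a compact ANR then yields a point $y\in\Phi(y)=e\bigl(F(g(y))\bigr)$. Writing $y=e(z)$ with $z\in F(g(y))\subset K$, I obtain $g(y)=g(e(z))=e^{-1}(e(z))=z\in F(g(y))$, so $x:=g(y)$ is the desired fixed point of $F$.

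The step carrying all the weight is this last invocation. The generalized Lefschetz number of an admissible map only makes sense once the Vietoris--Begle mapping theorem has been used to invert $p^{\ast}$, and extracting a fixed point from its non-vanishing rests on a fixed point index for admissible maps on ANRs, built by approximation on polyhedral nerves together with the normalization and multiplicativity of the single-valued fixed point index. This is precisely the body of results supplied by the cited reference; a self-contained argument would have to reproduce it, while every other step above is elementary.
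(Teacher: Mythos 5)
The paper states this result as a quoted theorem from Fournier--G\'orniewicz and gives no proof of its own, so there is nothing to compare against; judged on its own terms, your argument is correct and is essentially the standard proof of this statement. The reduction is sound at every step: the extension $g\colon Q\to X$ of $e^{-1}$ exists precisely because $e(K)$ is closed in the compact metrizable space $Q$ and $X$ is an absolute extensor for that class; $\Phi=e\circ F\circ g$ is admissible by the composition theorem; acyclicity of $Q$ forces $(p^*)^{-1}\circ q^*$ to be the identity on $H^0$ and zero in positive degrees for any selected pair, so the Lefschetz number is $1$; and the transport of the fixed point back via $g\circ e=\mathrm{id}_K$ is airtight. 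You are right that the only non-elementary ingredient is the Lefschetz fixed point theorem for admissible compact maps of compact ANRs (well-definedness of the generalized Lefschetz number via Vietoris--Begle plus the index machinery), and you have correctly isolated that as the imported black box rather than claiming to reprove it.
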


\begin{theorem}[\protect{\cite[Th.5.2.18]{hu}}]\label{KyFan}
If $M$ is a nonempty compact and convex subset of a locally convex space $E$ and $F\colon M\map M$ is a convex compact valued upper semicontinuous set-valued map, then $F$ has a fixed point.
\end{theorem}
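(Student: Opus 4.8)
The plan is to run the classical proof of the Kakutani--Fan--Glicksberg theorem: approximate the multimap $F$ by single-valued continuous maps, solve the fixed point problem for those in finite dimensions, and pass to the limit by compactness of $M$ together with the closed graph of $F$. Before that I would record one remark covering the cases actually needed in this paper: a compact convex subset of a locally convex space is an absolute retract, and an upper semicontinuous multimap with compact convex (hence acyclic) values is admissible, so as soon as $M$ is metrizable -- as it is, for instance, for weakly compact convex subsets of a separable Banach space carrying the weak topology -- Theorem \ref{Lefschetz} applies to $F\colon M\map M$ directly (with $F(M)\subseteq M$ compact metrizable) and there is nothing more to do. It remains to handle a possibly non-metrizable $M$.

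For the general case let $\mathcal V$ be the downward-directed family of convex symmetric open neighbourhoods of $0$ in $E$, fix $V\in\mathcal V$, and build a continuous approximate selection. Upper semicontinuity of $F$ provides, for each $x\in M$, a neighbourhood $x+W_x$ with $W_x\in\mathcal V$ and $F((x+W_x)\cap M)\subseteq F(x)+V$; compactness of $M$ gives a finite subcover, and with a subordinate partition of unity $p_1,\dots,p_n$ and points $c_i\in F(x_i)\subseteq M$ I would put $f_V(x):=\sum_{i=1}^n p_i(x)c_i$. Convexity of $M$ keeps $f_V$ valued in $M$; its range lies in the finite-dimensional compact convex set $M_V:=\co\{c_1,\dots,c_n\}\subseteq M$; and the crux is to check that the graph of $f_V$ sits inside a $V$-controlled neighbourhood of $\Graph(F)$, i.e. for each $x\in M$ there are $x'\in(x+V)\cap M$ and $w\in F(x')$ with $f_V(x)-w$ small. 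This is Cellina's approximate selection theorem, and proving it -- in particular making the graph estimate point in the right direction -- is the step I expect to be the main obstacle: it forces one to use the upper semicontinuity of $F$ and the convexity of its values simultaneously, and to arrange the neighbourhoods $W_{x_i}$ of the indices active at a given point so that one set $F(x_i)+V$ absorbs all the relevant $c_j$ by convexity.

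Granting the approximate selection, the rest is routine. The set $M_V$ is a finite-dimensional compact convex set, hence an absolute retract and so an absolute extensor for compact metrizable spaces, and $f_V\colon M_V\to M_V$ is a single-valued (hence admissible) continuous map, so Theorem \ref{Lefschetz} yields $x_V\in M_V\subseteq M$ with $x_V=f_V(x_V)$; the graph estimate then gives $x_V'\in(x_V+V)\cap M$ and $w_V\in F(x_V')$ with $x_V-w_V$ in a neighbourhood of $0$ that shrinks with $V$. Since $M$ is compact, the net $(x_V)_{V\in\mathcal V}$ has a subnet converging to some $x^*\in M$, along which $x_V'\to x^*$ and $w_V\to x^*$ as well, because $x_V'-x_V\in V$ and $x_V-w_V$ both tend to $0$. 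Finally, an upper semicontinuous multimap with closed values into a locally convex (in particular regular) space has closed graph, so passing to the limit in $(x_V',w_V)\in\Graph(F)$ gives $(x^*,x^*)\in\Graph(F)$, that is, $x^*\in F(x^*)$.
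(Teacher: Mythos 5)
The paper offers no proof of Theorem~\ref{KyFan} at all --- it is imported verbatim from Hu--Papageorgiou --- so there is nothing internal to compare against; what you propose is the classical Kakutani--Fan--Glicksberg argument, and it is correct in outline. Your preliminary remark is also accurate and already disposes of the metrizable case via Theorem~\ref{Lefschetz} (compact convex metrizable subsets of locally convex spaces are absolute extensors by Dugundji's theorem, and a usc map with compact convex, hence acyclic, values is admissible). In the general case the three stages --- approximate selection with range in the finite-dimensional compact convex set $M_V=\co\{c_1,\dots,c_n\}$, a fixed point $x_V=f_V(x_V)$ from Brouwer (or Theorem~\ref{Lefschetz}), and the limit passage along the neighbourhood filter using compactness of $M$ and the fact that a usc map with closed values into a regular space has closed graph --- are all sound. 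The one step you leave open is the one you flag yourself, and your instinct about where the difficulty lies is right: with the neighbourhoods $W_{x}$ chosen pointwise, the finite family $\{W_{x_i}\}$ attached to the active indices at a given point need not contain a largest member, so no single $F(x_i)+V$ is forced to absorb all the $c_j$. The standard repair is a two-stage cover: after extracting the finite subcover, pass to the single neighbourhood $W:=\bigcap_i W_{x_i}$, re-cover $M$ by translates $y+\tfrac12W$ of this \emph{one} set, and take the partition of unity subordinate to the second cover; then for any $x$ all active centres $y_j$ lie in $y_{j_0}+W$ for any fixed active $j_0$, this set sits inside some $x_i+W_{x_i}$, hence every chosen value lies in the single convex set $F(x_i)+V$ with $x_i\in x+V$, and convexity closes the graph estimate. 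Alternatively you could bypass the approximate selection entirely: for each closed convex symmetric $V$ show that $D_V:=\{x\in M\colon x\in F(x)+V\}$ is nonempty (apply Kakutani's finite-dimensional theorem to $x\map(F(x)+V)\cap\co\{x_1,\dots,x_n\}$ for a finite $V$-net of $M$) and closed, observe that the family $\{D_V\}$ has the finite intersection property, and intersect; this route trades the delicate neighbourhood bookkeeping for a closed-graph verification in finite dimensions.
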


\section{The Convergence Theorem}
In the case of upper hemicontinuous maps the following theorem is an analogon of a relation binding usc set-valued maps and semi limits (cf. \cite[Prop.1.4.7]{aubin}).
\begin{theorem}\label{Plis}
Let $F\colon E\map E$ be a closed convex valued upper hemicontinuous multimap. Then
\begin{equation}\label{Plis:1}
\f(y\in\bigcap_{\eps>0}\bigcap_{\delta>0}\overline{\co}\bigcup_{x'\in B(x,\delta)}B(F(x'),\eps)\g)\Longleftrightarrow (x,y)\in\Graph(F).
\end{equation}
\end{theorem}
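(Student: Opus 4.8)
The plan is to prove the two implications in \eqref{Plis:1} separately, the implication ``$\Leftarrow$'' being essentially trivial. Indeed, if $(x,y)\in\Graph(F)$, then for every $\eps>0$ and $\delta>0$ the point $x$ belongs to $B(x,\delta)$ and $y\in F(x)\subset B(F(x),\eps)$, so $y$ lies in $\bigcup_{x'\in B(x,\delta)}B(F(x'),\eps)$ and a fortiori in its closed convex hull; intersecting over all $\eps,\delta>0$ yields the left-hand side of \eqref{Plis:1}. Hence the whole content of the statement is the forward implication, which I would establish by contraposition.

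Assume then that $(x,y)\notin\Graph(F)$, i.e. $y\notin F(x)$. Since $F$ has nonempty closed convex values and $\{y\}$ is compact and convex, the strong separation form of the Hahn--Banach theorem supplies a functional $x^*\in E^*$ (necessarily nonzero, so after normalisation $\|x^*\|_{\mathcal{L}}=1$) and reals $\alpha_1<\alpha_2$ with $\sigma(x^*,F(x))\leqslant\alpha_1<\alpha_2\leqslant\langle x^*,y\rangle$. Fix some $\alpha\in(\alpha_1,\alpha_2)$. Upper hemicontinuity of $F$ means precisely that the extended-real function $x'\mapsto\sigma(x^*,F(x'))$ is upper semicontinuous; as its value at $x$ is $\leqslant\alpha_1<\alpha$, there is $\delta>0$ such that $\sigma(x^*,F(x'))<\alpha$ for every $x'\in B(x,\delta)$. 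Now fix any $\eps\in(0,\alpha_2-\alpha)$. For $x'\in B(x,\delta)$ and $z\in B(F(x'),\eps)$ choose $w\in F(x')$ with $|z-w|<\eps$; then $\langle x^*,z\rangle=\langle x^*,w\rangle+\langle x^*,z-w\rangle<\sigma(x^*,F(x'))+\eps<\alpha+\eps$. Consequently $\bigcup_{x'\in B(x,\delta)}B(F(x'),\eps)$ is contained in the closed convex half-space $\{z\in E\colon\langle x^*,z\rangle\leqslant\alpha+\eps\}$, and therefore so is $\overline{\co}\bigcup_{x'\in B(x,\delta)}B(F(x'),\eps)$. But $\langle x^*,y\rangle\geqslant\alpha_2>\alpha+\eps$, so $y$ does not belong to this closed convex hull, contradicting membership of $y$ in the left-hand side of \eqref{Plis:1}. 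This proves ``$\Rightarrow$''.

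The argument is thus a routine combination of Hahn--Banach separation with the very definition of upper hemicontinuity, and I do not anticipate a serious obstacle. The only point requiring a little care is the passage from the union $\bigcup_{x'\in B(x,\delta)}B(F(x'),\eps)$ to its closed convex hull: this works because the relevant set is a half-space determined by a continuous linear functional, hence closed and convex, and because the $\eps$-enlargements $B(F(x'),\eps)$ only perturb values of $x^*$ by at most $\|x^*\|_{\mathcal{L}}\eps$, a quantity one controls by shrinking $\eps$ below $\alpha_2-\alpha$.
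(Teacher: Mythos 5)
Your proof is correct and is essentially the paper's argument in contrapositive form: both reduce the forward implication to the inequality $\langle x^*,y\rangle\leqslant\sigma(x^*,F(x))$ via the upper semicontinuity of $x'\mapsto\sigma(x^*,F(x'))$ and the support-function calculus for $\overline{\co}$ of unions and $\eps$-enlargements, and both conclude with Hahn--Banach separation from the closed convex set $F(x)$. The only (harmless) quibble is notational: the normalisation should read $\|x^*\|_{E^*}=1$ rather than $\|x^*\|_{\mathcal L}=1$.
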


\begin{proof}
The ''only if'' part is basically obvious. It follows from the fact that $(x,y)\in\overline{\Graph(F)}$ if and only if $y\in\underset{x'\to x}{\Limsup}\,F(x')$, where the latter is the upper limit in the sense of Painlev\'e-Kuratowski. This limit is evidently contained in $\bigcap\limits_{\eps>0}\bigcap\limits_{\delta>0}\overline{\co}\bigcup\limits_{x'\in B(x,\delta)}B(F(x'),\eps)$.\par Fix $y\in\bigcap\limits_{\eps>0}\bigcap\limits_{\delta>0}\overline{\co}\bigcup\limits_{x'\in B(x,\delta)}B(F(x'),\eps)$. Let $x^*\in E^*$. By the definition of upper hemicontinuity \[\forall\,\eps>0\;\exists\,\delta>0\;\;\;\sigma(x^*,F(B(x,\delta)))<\sigma(x^*,F(x))+\eps.\] Thus, \[\inf_{\eps>0}\inf_{\delta>0}\sigma(x^*,F(B(x,\delta)))\<\inf_{\eps>0}\f(\sigma(x^*,F(x))+\eps\g).\] The latter property implies
\begin{align*}
\langle x^*,y\rangle&\<\sigma\f(x^*,\bigcap_{\eps>0}\bigcap_{\delta>0}\overline{\co}\bigcup_{x'\in B(x,\delta)}B(F(x'),\eps)\g)\<\inf_{\eps>0}\inf_{\delta>0}\sigma(x^*,\overline{\co}F(B(x,\delta)))\\&=\inf_{\eps>0}\inf_{\delta>0}\sigma(x^*,F(B(x,\delta)))\<\inf_{\eps>0}\f(\sigma(x^*,F(x))+\eps\g)\<\sigma(x^*,F(x))+|x^*|\inf_{\eps>0}\eps\\&=\sigma(x^*,F(x)).
\end{align*}
Since $F$ has closed convex values, it means that $y\in F(x)$, i.e. $(x,y)\in\Graph(F)$.
\end{proof}

Let $(I,{\mathscr L}(I),\ell)$ denote the Lebesgue measure space. The following property of upper hemicontinuous multimaps with closed and convex values is a key, although strictly technical, tool used in the proofs of results regarding differential inclusions.
\begin{corollary}[{\bf Pli\'s Convergence Theorem}]\label{convth}
Let $F\colon E\map E$ be a closed convex valued upper hemicontinuous multimap. Assume that functions $f_n,f\colon I\to E$ and $g_n,g\colon I\to E$ are such that 
\begin{equation}\label{Plis:2}
g_n(t)\xrightarrow[n\to\infty]{E}g(t)\;\text{ a.e. on }I
\end{equation}
and
\begin{equation}\label{Plis:3}
f_n(t)\in\overline{\co}B(F(B(g_n(t),\eps_n)),\eps_n)\;\text{ a.e. on }I, \text{where }\eps_n\to 0^+\text{ as }n\to\infty.
\end{equation}
If one of the following conditions holds
\begin{itemize}
\item[(i)] $f_n\xrightharpoonup[n\to\infty]{L^1(I,E)}f$,
\item[(ii)] $f_n$ and $f$ are weakly $\ell$-measurable and \[\forall\,J\in{\mathscr L}(I)\;\;\;(\mathrm{D})\int_Jf_n\,d\ell\xrightharpoonup[n\to\infty]{*}(\mathrm{D})\int_Jfd\ell,\] where $(\mathrm{D})\int_Jfd\ell$ is the Dunford integral of $f$ over $J$,
\item[(iii)] $f_n(t)\xrightharpoonup[n\to\infty]{E}f(t)$ a.e. on $I$,
\item[(iv)] $f(t)\in\overline{\co}\,$\mbox{w}-$\limsup\limits_{n\to\infty}\,\{f_n(t)\}$ a.e. on $I$, where \[\mbox{w}\text{-}\limsup\limits_{n\to\infty}\,A_n:=\f\{x\in E\colon x=\mbox{w}\text{-}\lim_{n\to\infty}x_{k_n}, x_{k_n}\in A_{k_n}, k_1<k_2<\ldots\g\}\,\text{ and }\,\x{A_n}\subset 2^E\setminus\{\varnothing\},\]
\item[(v)] $f(t)\in\bigcap\limits_{n=1}^\infty\overline{\co}\bigcup\limits_{m=n}^\infty\{f_m(t)\}$ a.e. on $I$,
\end{itemize}
then $f(t)\in F(g(t))$ a.e. on $I$.
\end{corollary}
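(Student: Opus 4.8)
The plan is to run everything through Theorem~\ref{Plis}: for a closed convex valued upper hemicontinuous map that theorem says precisely that, for each $x\in E$,
\[F(x)=\bigcap_{\eps>0}\bigcap_{\delta>0}\overline{\co}\bigcup_{x'\in B(x,\delta)}B(F(x'),\eps),\]
so it is enough to prove that for a.e.\ $t\in I$ the vector $f(t)$ lies in the right-hand side with $x=g(t)$. I would organise the five hypotheses as follows: (iii)$\Rightarrow$(iv)$\Rightarrow$(v) and (i)$\Rightarrow$(v), after which (v) is disposed of by a direct set-theoretic argument that needs no measurability of the $f_n$; condition (ii), being genuinely weaker (there the $f_n$ need not even be Bochner integrable), is treated separately by scalarisation.

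The reductions are routine. (iii)$\Rightarrow$(iv): a weak a.e.\ limit of $\{f_n(t)\}$ lies in $\mathrm{w}\text{-}\limsup_{n\to\infty}\{f_n(t)\}$, hence in its closed convex hull. (iv)$\Rightarrow$(v): any element of $\mathrm{w}\text{-}\limsup_{n\to\infty}\{f_n(t)\}$ is a weak limit of a subsequence whose indices eventually exceed $n$, hence lies in the weak closure of $\{f_m(t)\colon m\ge n\}$, which equals $\overline{\co}\bigcup_{m\ge n}\{f_m(t)\}$ by Mazur's theorem; taking closed convex hulls and intersecting over $n$ gives $\overline{\co}\,\mathrm{w}\text{-}\limsup_{n\to\infty}\{f_n(t)\}\subset\bigcap_{n}\overline{\co}\bigcup_{m\ge n}\{f_m(t)\}$. (i)$\Rightarrow$(v): Mazur's lemma furnishes convex combinations $h_j=\sum_m\lambda^j_mf_m$ with $\lambda^j_m\ne 0$ only for $m\ge j$ and $h_j\to f$ in $L^1(I,E)$; along a subsequence $h_{j_k}(t)\to f(t)$ a.e., and since $h_{j_k}(t)\in\co\bigcup_{m\ge n}\{f_m(t)\}$ once $j_k\ge n$, one gets $f(t)\in\overline{\co}\bigcup_{m\ge n}\{f_m(t)\}$ for every $n$, a.e. Finally, under (v): fix $t$ in the (full-measure) set on which $g_n(t)\to g(t)$, the inclusion \eqref{Plis:3} holds for all $n$, and the inclusion in (v) holds. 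Given $\eps,\delta>0$, pick $N$ with $|g_m(t)-g(t)|+\eps_m<\delta$ and $\eps_m<\eps$ for $m\ge N$; then $B(g_m(t),\eps_m)\subset B(g(t),\delta)$, whence by monotonicity of the image $F(\cdot)$, $f_m(t)\in\overline{\co}B\bigl(F(B(g(t),\delta)),\eps\bigr)=\overline{\co}\bigcup_{x'\in B(g(t),\delta)}B(F(x'),\eps)$ for all $m\ge N$. This set is closed and convex, so (v) puts $f(t)$ into it; as $\eps,\delta$ were arbitrary, Theorem~\ref{Plis} yields $f(t)\in F(g(t))$.

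For (ii) I would scalarise. Fix $x^*\in E^*$. Tested against $x^*$, the hypothesis reads $\int_J\langle x^*,f_n\rangle\,d\ell\to\int_J\langle x^*,f\rangle\,d\ell$ for every $J\in{\mathscr L}(I)$; by the Vitali--Hahn--Saks theorem $\{\langle x^*,f_n\rangle\}$ is uniformly integrable, hence relatively weakly compact in $L^1(I)$, and the convergence of all these integrals forces $\langle x^*,f_n\rangle\rightharpoonup\langle x^*,f\rangle$ in $L^1(I)$. A Mazur argument as above then gives $\langle x^*,f(t)\rangle\le\limsup_{n\to\infty}\langle x^*,f_n(t)\rangle$ a.e. On the other hand, upper hemicontinuity makes $x'\mapsto\sigma(x^*,F(x'))$ upper semicontinuous; combining this with $g_n(t)\to g(t)$, $\eps_n\to 0^+$, the elementary bound $\sigma(x^*,\overline{\co}B(C,\eps))\le\sigma(x^*,C)+|x^*|\eps$, and \eqref{Plis:3}, one obtains $\limsup_{n\to\infty}\langle x^*,f_n(t)\rangle\le\sigma(x^*,F(g(t)))$ a.e. Hence $\langle x^*,f(t)\rangle\le\sigma(x^*,F(g(t)))$ off a null set $N_{x^*}$. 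Since $f$ and $g$ are essentially separably valued, one selects a countable family $\{x^*_k\}\subset E^*$ that norms a separable subspace carrying (a.e.)\ the values $f(t)$ and the sets $F(g(t))$, puts $N=\bigcup_kN_{x^*_k}$, and concludes from closedness and convexity of $F(g(t))$, via Hahn--Banach separation, that $f(t)\in F(g(t))$ for $t\notin N$.

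The step I expect to fight with is precisely this closing move in case (ii): manufacturing a single null set out of the family $\{N_{x^*}\}_{x^*\in E^*}$. A countable norming family does not automatically control the support functions $\sigma(x^*,F(g(\cdot)))$, and for merely weakly measurable $f$ the implication ``$\langle x^*,f(t)\rangle\le\sigma(x^*,F(g(t)))$ for all $x^*$, a.e.''~$\Rightarrow$~``$f(t)\in F(g(t))$ a.e.'' can fail outright; so the argument must first exploit the (strong) measurability of $g$ and the structure of $F$ to confine all relevant data to a separable subspace before choosing the functionals. Everything else is essentially bookkeeping once Theorem~\ref{Plis} is on the table.
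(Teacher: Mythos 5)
Your proposal is correct and follows essentially the same route as the paper: conditions (i), (iii), (iv) are reduced to (v) (the paper handles (i) by citing the classical Mazur-type argument you spell out, and notes (iii)$\Rightarrow$(iv)$\Rightarrow$(v)), and (v) is then dispatched exactly as you do, by showing $f(t)\in\bigcap_{\eps>0}\bigcap_{\delta>0}\overline{\co}\bigcup_{x'\in B(g(t),\delta)}B(F(x'),\eps)$ and invoking Theorem~\ref{Plis}. The only divergence is case (ii), which you scalarise directly against $\sigma(x^*,F(g(t)))$ while the paper instead proves (ii)$\Rightarrow$(v) by a contradiction argument yielding ``for every $n$ and every $x^*$, $\langle x^*,f(t)\rangle\<\sup_{m\geqslant n}\langle x^*,f_m(t)\rangle$ a.e.''; the quantifier-exchange difficulty you flag at the end (the null set depending on $x^*$, with $E^*$ uncountable) is exactly the step the paper's ``i.e.'' silently performs there, so your concern identifies a genuine subtlety that the paper itself does not resolve rather than a defect of your own argument.
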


\begin{proof}
The thesis can be inferred directly from the assumption (i), as it has been done many times in the past (cf. classic reference \cite{aubin2}). The implication between assumption (i) and (ii) can be easily justified under the additional assumption that $E^*$ has RNP. Convergence $f_n\xrightharpoonup[n\to\infty]{L^1(I,E)}f$ means that for every $g\in L^\infty(I,E^*)$, $\int_I\langle g(t),f_n(t)\rangle\,dt\xrightarrow[n\to\infty]{}\int_I\langle g(t),f(t)\rangle\,dt$. For every $J\in{\mathscr L}(I)$ and $x^*\in E^*$ define $g:=x^*{\bf 1}_J\in L^\infty(I,E^*)$. Then $\langle x^*,(\mathrm{D})\int_Jf_nd\ell\rangle\xrightarrow[n\to\infty]{}\langle x^*,(\mathrm{D})\int_Jfd\ell\rangle$. Consequently, $(\mathrm{D})\int_Jf_n\,d\ell\xrightharpoonup[n\to\infty]{*}(\mathrm{D})\int_Jfd\ell$.\par Condition (ii) entails condition (v). Assume that there is $n_0\in\mathbb{N}$, $x_0^*\in E^*$ and a subset $J\in{\mathscr L}(I)$ such that $\ell(J)>0$ and $\langle x_0^*,f(t)\rangle>\sup\limits_{m\geqslant n_0}\,\langle x_0^*,f_m(t)\rangle$ for every $t\in J$. The set $J$ has the form of a countable union of sets \[J_k:=\f\{t\in J\colon\langle x_0^*,f(t)\rangle>\sup\limits_{m\geqslant n_0}\,\langle x_0^*,f_m(t)\rangle+1/k\g\}.\] The sets $J_k$ are clearly measurable, since the function $I\ni t\mapsto\langle x_0^*,f(t)\rangle-\sup\limits_{m\geqslant n_0}\,\langle x_0^*,f_m(t)\rangle-1/k\in\mathbb{R}$ is $\ell$-measurable. Moreover, there must be a set $J_{k_0}$ such that $\ell(J_{k_0})>0$. Now, observe that
\begin{align*}
\f\langle x_0^*,(\mathrm{D})\int_{J_{k_0}}fd\ell\g\rangle&=\int_{J_{k_0}}\langle x_0^*,f(t)\rangle\,dt>\int_{J_{k_0}}\langle x_0^*,f_m(t)\rangle\,dt+\frac{\ell(J_{k_0})}{k_0}\\&=\f\langle x_0^*,(\mathrm{D})\int_{J_{k_0}}f_m\,d\ell\g\rangle+\frac{\ell(J_{k_0})}{k_0}
\end{align*}
for every $m\geqslant n_0$. In view of (ii) we have \[\f\langle x_0^*,(\mathrm{D})\int_{J_{k_0}}fd\ell\g\rangle\geqslant\f\langle x_0^*,(\mathrm{D})\int_{J_{k_0}}fd\ell\g\rangle+\frac{\ell(J_{k_0})}{k_0}\] - a contradiction. Thus, \[\forall\,\n\;\forall\,x^*\in E^*\;\;\;\langle x^*,f(t)\rangle\<\sup_{m\geqslant n}\,\langle x^*,f_m(t)\rangle\;\text{ a.e. on }I,\] i.e. $f(t)\in\bigcap\limits_{n=1}^\infty\overline{\co}\bigcup\limits_{m=n}^\infty\{f_m(t)\}$ a.e. on $I$.\par Of course, (iii) implies (iv) and (iv) implies (v).\par Fix $t\in I$ such that \eqref{Plis:2}, \eqref{Plis:3} and (v) are satisfied simultaneously. Take $\eps>0$ and $\delta>0$. In view of \eqref{Plis:2} there is $n\in\mathbb{N}$ such that $B(g_m(t),\eps_m)\subset B(g(t),\delta)$ and $\eps_m<\eps$ for $m\geqslant n$. From \eqref{Plis:3} it follows that 
\begin{align*}
\overline{\co}\bigcup_{m=n}^\infty\{f_m(t)\}&\subset\overline{\co}\bigcup_{m=n}^\infty\overline{\co}B(F(B(g_m(t),\eps_m)),\eps_m)\subset\overline{\co}\bigcup_{m=n}^\infty B(F(B(g_m(t),\eps_m)),\eps_m)\\&\subset\overline{\co}B(F(B(g(t),\delta)),\eps).
\end{align*}
Hence, \[f(t)\in\bigcap\limits_{n=1}^\infty\overline{\co}\bigcup\limits_{m=n}^\infty\{f_m(t)\}\subset\bigcap_{\eps>0}\bigcap_{\delta>0}\overline{\co}\bigcup_{x\in B(g(t),\delta)}B(F(x),\eps).\] Applying Theorem \ref{Plis}. one sees that $f(t)\in F(g(t))$.
\end{proof}

\begin{corollary}\label{wuhc}
Let $F\colon E\map E$ be a closed convex valued multimap satisfying:
\begin{equation}\label{wuhc2}
x_n\xrightharpoonup[n\to\infty]{E}x\Longrightarrow\limsup_{n\to\infty}\sigma(x^*,F(x_n))\<\sigma(x^*,F(x))\;\text{for all }x^*\in E^*.
\end{equation}
Assume that functions $f_n,f\colon I\to E$ and $g_n,g\colon I\to E$ are such that 
\begin{equation}\label{Plis:4}
g_n(t)\xrightharpoonup[n\to\infty]{E}g(t)\;\text{ a.e. on }I
\end{equation}
and
\begin{equation}\label{Plis:5}
f_n(t)\in\overline{\co}B(F(g_n(t)),\eps_n)\;\text{ a.e. on }I, \text{where }\eps_n\to 0^+\text{ as }n\to\infty.
\end{equation}
If the following condition holds
\begin{equation}\label{Plis:6}
f(t)\in\bigcap\limits_{n=1}^\infty\overline{\co}\bigcup\limits_{m=n}^\infty\{f_m(t)\}\;\;\text{a.e. on }I,
\end{equation}
then $f(t)\in F(g(t))$ a.e. on $I$.
\end{corollary}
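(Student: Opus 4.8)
The plan is to argue pointwise: fix $t\in I$ at which the three a.e.\ hypotheses \eqref{Plis:4}, \eqref{Plis:5}, \eqref{Plis:6} hold simultaneously (a co-null set), fix an arbitrary $x^*\in E^*$, and show $\langle x^*,f(t)\rangle\<\sigma(x^*,F(g(t)))$; since this holds for every $x^*$ and $F(g(t))$ is nonempty, closed and convex, the Hahn--Banach separation theorem will then force $f(t)\in F(g(t))$. Observe that this route avoids Theorem \ref{Plis} entirely: hypothesis \eqref{wuhc2} will play exactly the role that norm upper hemicontinuity played in the proof of Corollary \ref{convth}, which is essential here because $g_n$ now converges only weakly.

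First I would peel off \eqref{Plis:6}: since $f(t)\in\overline{\co}\bigcup_{m\geqslant n}\{f_m(t)\}$ for every $n$ and $x^*$ is weakly continuous and linear, one gets $\langle x^*,f(t)\rangle\<\sup_{m\geqslant n}\langle x^*,f_m(t)\rangle$ for each $n$, hence $\langle x^*,f(t)\rangle\<\limsup_{m\to\infty}\langle x^*,f_m(t)\rangle$. Next I would estimate each term using \eqref{Plis:5}: the support function $\sigma(x^*,\cdot)$ is unchanged under passage to the closed convex hull, and $B(A,\eps)\subset A+B(0,\eps)$ with $\sigma(x^*,B(0,\eps))=\eps|x^*|$, so
\[
\langle x^*,f_m(t)\rangle\<\sigma\f(x^*,\overline{\co}\,B(F(g_m(t)),\eps_m)\g)=\sigma(x^*,F(g_m(t)))+\eps_m|x^*|.
\]
Taking $\limsup$ over $m$ and using $\eps_m\to 0^+$ yields $\limsup_m\langle x^*,f_m(t)\rangle\<\limsup_m\sigma(x^*,F(g_m(t)))$, and then \eqref{Plis:4} together with \eqref{wuhc2} gives $\limsup_m\sigma(x^*,F(g_m(t)))\<\sigma(x^*,F(g(t)))$. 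Chaining these inequalities produces $\langle x^*,f(t)\rangle\<\sigma(x^*,F(g(t)))$, as desired, and letting $t$ range over the co-null set completes the proof.

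There is no genuine obstacle here — this is essentially the final paragraph of the proof of Corollary \ref{convth} rewritten with \eqref{wuhc2} in place of the Theorem \ref{Plis} step. The only points requiring a little care are the support-function bookkeeping (pushing $\sigma(x^*,\cdot)$ through $\overline{\co}$ and through the $\eps$-inflation of $F(g_m(t))$) and the observation that the whole computation remains valid when some $\sigma(x^*,\cdot)$ equals $+\infty$: all inequalities are read in $[-\infty,+\infty]$, and in the concluding separation step the functional separating $f(t)$ from $F(g(t))$ produces a strict inequality against the finite number $\langle x^*,f(t)\rangle$, which is all that is needed.
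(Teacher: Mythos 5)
Your proposal is correct and follows essentially the same route as the paper: both reduce to the scalar inequality $\langle x^*,f(t)\rangle\<\sigma(x^*,F(g(t)))$ by pushing the support function through \eqref{Plis:6}, absorbing the $\eps_m$-inflation in \eqref{Plis:5} as an $\eps_m|x^*|$ term, invoking \eqref{wuhc2} via \eqref{Plis:4}, and concluding by separation from the closed convex set $F(g(t))$. The paper merely phrases the same chain through nested set containments and iterated $\sup/\inf$ operations before taking support functions, so the difference is purely presentational.
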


\begin{proof}
Let $x_n\xrightharpoonup[n\to\infty]{E}x$. Then $x_n\xrightharpoonup[n\geqslant N]{E}x$ and $\limsup\limits_{n\geqslant N}\sigma(x^*,F(x_n))\<\sigma(x^*,F(x))$ for every $x^*\in E^*$ and $N\geqslant 1$, in view of \eqref{wuhc2}. Therefore, 
\begin{equation}\label{wuhc3}
\forall\,x^*\in E^*\;\;\sup_{N\geqslant 1}\inf_{n\geqslant N}\sup_{m\geqslant n}\sigma(x^*,F(x_m))\<\sigma(x^*,F(x)).
\end{equation}
Take $\eps>0$. There is $N\in\mathbb{N}$ such that $\eps_m<\eps$ for $m\geqslant N$. From \eqref{Plis:5} it follows that 
\[\overline{\co}\bigcup_{m=N}^\infty\{f_m(t)\}\subset\overline{\co}\bigcup_{m=N}^\infty\overline{\co}B(F(g_m(t)),\eps_m)\subset\overline{\co}\bigcup_{m=N}^\infty B(F(g_m(t)),\eps).\] Hence, \[f(t)\in\bigcap\limits_{n=1}^\infty\overline{\co}\bigcup\limits_{m=n}^\infty\{f_m(t)\}\subset
\bigcap_{n\geqslant N}\overline{\co}\bigcup_{m=n}^\infty B(F(g_m(t)),\eps)\] and eventually \[f(t)\in\bigcap_{\eps>0}\bigcup_{N=1}^\infty\bigcap_{n=N}^\infty\overline{\co}\bigcup_{m=n}^\infty B(F(g_m(t)),\eps)\;\;\text{a.e. on }I.\] Take $x^*\in E^*$. By \eqref{Plis:4} and \eqref{wuhc3} it follows that
\begin{align*}
\langle x^*,f(t)\rangle&\<\sigma\f(x^*,\bigcap_{\eps>0}\bigcup_{N=1}^\infty\bigcap_{n=N}^\infty\overline{\co}\bigcup_{m=n}^\infty B(F(g_m(t)),\eps)\g)\<\inf_{\eps>0}\sup_{N\geqslant 1}\inf_{n\geqslant N}\sup_{m\geqslant n}\sigma(x^*,B(F(g_m(t)),\eps))\\&\<\sup_{N\geqslant 1}\inf_{n\geqslant N}\sup_{m\geqslant n}\sigma(x^*,F(g_m(t)))+\inf_{\eps>0}\eps|x^*|\<\sigma(x^*,F(g(t))).
\end{align*}
Consequently, $f(t)\in F(g(t))$ a.e. on $I$.
\end{proof}

\section{Existence and topology of solutions}
The remainder of the article rests on the following hypotheses:
\begin{itemize}
\item[$(\A_1)$] $A\colon D(A)\to E$ is a generator of a non-degenerate integrated semigroup $\{S(t)\}_{t\geqslant 0}$ such that $||S(t)||\<Me^{\omega t}$ for $t\geqslant 0$ with suitable constants $M>0$ and $\omega\in\R{}$,
\item[$(\A_2)$] $A\colon D(A)\to E$ satisfies $(\A_1)$ and the generated semigroup $\{S(t)\}_{t\geqslant 0}$ is equicontinuous,
\item[$(\F_1)$] for every $(t,x)\in I\times E$ the set $F(t,x)$ is nonempty and convex,
\item[$(\F_2)$] the map $F(\cdot,x)$ has a strongly measurable selection for every $x\in E$,
\item[$(\F_3)$] the graph $\Graph\f(F(t,\cdot)\g)$ is sequentially closed in $(E,w)\times(E,w)$ for a.a. $t\in I$,
\item[$(\F_4)$] $F$ satisfies the following growth condition: \[\limsup_{r\to+\infty}r^{-1}\overline{\int\limits_I}\sup_{|x|\<r}||F(t,x)||^+\,dt<M^{-1}e^{-\omega T},\] where $M,\omega$ are exactly the same constants as in $(\A_1)$,
\item[$(\F_5)$] there is a function $\eta\in L^1(I,\R{})$ such that for all bounded $\Omega$ in $E$ and for a.a. $t\in I$ the inequality holds \[\beta(F(t,\Omega))\<\eta(t)\beta(\Omega).\]
\end{itemize}

\begin{remark}
A linear operator $A$ is called a generator of an integrated semigroup, if there exists $\omega\in\R{}$ such that $(\omega,\infty)\subset\rho(A)$, and there exists a strongly continuous exponentially bounded family $\{S(t)\}_{t\geqslant 0}$ of bounded operators such that $S(0)=0$ and $(\lambdaup-A)^{-1}=\lambdaup\int_0^\infty e^{-\lambdaup t}S(t)\,dt$ for $\lambdaup>\omega$. An integrated semigroup $\{S(t)\}_{t\geqslant 0}$ is called non-degenerate if $\bigcap\limits_{t\geqslant 0}\ker S(t)=\{0\}$.
\end{remark}

\begin{remark}
By condition $(\F_4)$ we mean implicitly that the map $I\ni t\mapsto\sup\limits_{|x|\<r}||F(t,x)||^+\in\R{}_+$ is bounded for every $r>0$. The upper integral of a bounded $($but not necessarily measurable$)$ function $f\colon I\to\R{}_+$ is \[\overline{\int\limits_I}f(t)\,dt:=\inf\f\{\int\limits_Ig(t)\,dt\colon g\in L^1(I), f(t)\<g(t)\text{ a.e. on }I\g\}.\]
\end{remark}

\begin{remark}
If the integrated semigroup $\{S(t)\}_{t\geqslant 0}$ is exponentially stable in the sense that $||S(t)||\<e^{-\omega t}$ for $t\geqslant 0$ with $\omega>0$, then assumption $(\F_4)$ shall take the form \[\limsup_{r\to+\infty}r^{-1}\overline{\int\limits_I}\sup_{|x|\<r}||F(t,x)||^+\,dt<1.\]
\end{remark}

Let $N_F\colon C(I,E)\map L^1(I,E)$ be the Nemtyski\v{\i} operator corresponding to $F$, i.e.
\[N_F(u):=\f\{w\in L^1(I,E)\colon w(t)\in F(t,u(t))\mbox{ for a.a. }t\in I\g\}.\] 

\begin{remark}
Under hypotheses $(\F_1)$-$(\F_5)$ the Nemytski\v{\i} operator $N_F$ is nonempty convex weakly compact valued weakly upper semicontinuous set-valued map $($cf. for instance \cite[Prop.1.]{pietkun}$)$.
\end{remark}

Let us also define the Volterra integral operator $V\colon L^1(I,E)\to C(I,E)$ by the formulae:
\begin{equation}\label{volterra}
V(f)(t):=\int_0^tS(t-s)f(s)\,ds\;\;\text{for }t\in I.
\end{equation}

\begin{lemma}\label{mono}
Assume $(\A_1)$. Then the integral operator $V\colon L^1(I,E)\to C(I,E)$ defined by \eqref{volterra} is a bounded linear monomorphism with $||V||_{{\mathcal L}(L^1,C)}\<Me^{\omega T}$.
\end{lemma}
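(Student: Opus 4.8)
I would establish the three assertions — linearity, the norm bound $\|V\|_{\mathcal L(L^1,C)}\le Me^{\omega T}$ (including that $V(f)$ really lands in $C(I,E)$), and injectivity — in that order, the first two by routine estimates and the last by invoking the integrated-semigroup structure.

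\textbf{Linearity, measurability, the norm bound and continuity.} Linearity of $V$ is immediate from linearity of the Bochner integral and of the operators $S(t-s)$. Since $(\A_1)$ stays valid when $\omega$ is replaced by $\max\{\omega,0\}$, I would assume $\omega\ge 0$. For fixed $t\in I$ the map $s\mapsto S(t-s)f(s)$ on $[0,t]$ is strongly measurable — it is the pointwise product of the strongly measurable $f$ with the strongly continuous, norm-bounded operator family $s\mapsto S(t-s)$, as one sees by approximating $f$ by simple functions — and it is dominated by $Me^{\omega T}|f(\cdot)|\in L^1(I)$; hence the integral in \eqref{volterra} exists as a Bochner integral and $|V(f)(t)|\le\int_0^t\|S(t-s)\|_{\mathcal L}\,|f(s)|\,ds\le Me^{\omega T}\|f\|_1$, so $\|V\|_{\mathcal L(L^1,C)}\le Me^{\omega T}$. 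For the continuity of $V(f)$ I would write, for $0\le t\le t'\le T$,
\[V(f)(t')-V(f)(t)=\int_t^{t'}S(t'-s)f(s)\,ds+\int_0^t\big(S(t'-s)-S(t-s)\big)f(s)\,ds;\]
the first summand is bounded by $Me^{\omega T}\int_t^{t'}|f(s)|\,ds\to 0$ by absolute continuity of the integral, and the second converges to $0$ by dominated convergence, its integrand tending to $0$ pointwise as $t'\to t$ by strong continuity of $\{S(t)\}$ and being dominated by $2Me^{\omega T}|f(\cdot)|$.

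\textbf{Injectivity.} The crux is the representation formula
\[V(f)(t)=A\!\int_0^t V(f)(s)\,ds+\int_0^t(t-s)f(s)\,ds\qquad(t\in I),\]
which expresses that $V(f)$ is the integrated solution (in the sense of \cite[Def.6.4.]{thieme}) of the linear problem with initial value $0$ and single-valued right-hand side $f$. I would obtain this either by citing the basic theory of integrated semigroups (\cite{arendt,thieme}) or directly via Laplace transforms: using $\int_0^\infty e^{-\lambdaup t}S(t)\,dt=\lambdaup^{-1}(\lambdaup-A)^{-1}$ from the Remark, the identity $A(\lambdaup-A)^{-1}=\lambdaup(\lambdaup-A)^{-1}-I$, the convolution theorem and the closedness of $A$, one checks that both sides have Laplace transform $\lambdaup^{-1}(\lambdaup-A)^{-1}\hat f(\lambdaup)$ and therefore coincide; non-degeneracy of $\{S(t)\}$ is what makes the generator $A$, and hence the formula, well defined. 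Granting the formula, suppose $V(f)\equiv 0$ on $I$. Then $\int_0^t V(f)(s)\,ds=0$, so $A\int_0^t V(f)(s)\,ds=0$ and the formula collapses to $\int_0^t(t-s)f(s)\,ds=0$ for every $t\in I$. Since $\int_0^t(t-s)f(s)\,ds=\int_0^t\big(\int_0^\tau f(s)\,ds\big)\,d\tau$ by Fubini, differentiating yields $\int_0^t f(s)\,ds=0$ for every $t\in I$, and a second differentiation (Lebesgue differentiation theorem) gives $f=0$ a.e., i.e. $f=0$ in $L^1(I,E)$. Hence $V$ is a monomorphism.

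\textbf{Main obstacle.} Everything is routine except the representation formula above: it is the single point where the hypothesis that $A$ generates a \emph{non-degenerate} integrated semigroup is genuinely exploited, and it is the mechanism turning non-degeneracy into injectivity of $V$. Once it is in place, injectivity is a two-line differentiation argument, and the remaining assertions are a direct norm estimate together with a standard dominated-convergence argument.
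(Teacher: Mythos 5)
Your proposal is correct and follows essentially the same route as the paper: the paper's entire proof is ``apply \cite[Th.6.5.]{thieme} and the very definition of an integrated solution to \eqref{P2}, the norm estimate being straightforward,'' and the representation formula $V(f)(t)=A\int_0^tV(f)(s)\,ds+\int_0^t(t-s)f(s)\,ds$ that you correctly identify as the crux (and from which injectivity follows by your two differentiations) is precisely what that citation supplies. Your write-up merely makes explicit the measurability, continuity and norm estimates that the paper leaves to the reader (modulo the harmless convention $\omega\geqslant 0$ needed for the stated bound $Me^{\omega T}$, which the paper also uses implicitly).
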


\begin{proof}
Apply \cite[Th.6.5.]{thieme} and the very definition of an integrated solution to the inhomogeneous Cauchy problem \eqref{P2}. The estimate of the norm $||V||_{{\mathcal L}(L^1,C)}$ follows straightforwardly.
\end{proof}

\par It is important from a methodological point of view to realize that the solution set $S_{\!F}(x_0)$ of all integrated solutions to the problem \eqref{P} coincides with the fixed point set $\fix(H)$ of the operator $H\colon C(I,E)\map C(I,E)$ defined by $H:=S(\cdot)x_0+V\circ N_F$. Indeed, if $u\in\fix(H)$ then $u=S(\cdot)x_0+V(f)$ for some $f\in N_F(u)$. Thanks to \cite[Th.6.5.]{thieme} we know that $u$ belongs to $S_{\!F}(x_0)$. Suppose then that $u\in S_{\!F}(x_0)$. This means that \[u(t)=tx_0+A\int_0^tu(s)\,ds+\int_0^t(t-s)f(s)\,ds\] for some $f\in N_F(u)$. The inhomogeneous Cauchy problem \eqref{P2} has a unique integrated solution $x$ given by the formula: $x=S(\cdot)x_0+V(f)$. This follows again from the use of \cite[Th.6.5.]{thieme}. Since $u$ is also a solution to \eqref{P2}, it means that $u=S(\cdot)x_0+V(f)$, i.e. $u\in\fix(H)$.\par Recall that the space $E$ is called {\it weakly compactly generated} (WCG) if there is a weakly compact set $K$ in $E$ such that $E=\overline{\spn}(K)$.

\begin{lemma}\label{lemat1}
Let $E$ be a WCG space. Assume $(\A_2)$, $(\F_1)$ and $(\F_3)$-$(\F_5)$. Then the solution set $S_{\!F}(x_0)$ is weakly compact in $C(I,E)$.
\end{lemma}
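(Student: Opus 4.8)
The plan is to exploit the identification $S_{\!F}(x_0)=\fix(H)$, $H=S(\cdot)x_0+V\circ N_F$, and to invoke the Eberlein--\v{S}mulian theorem: since in the Banach space $C(I,E)$ weak compactness coincides with weak sequential compactness, it suffices to show that every sequence $(u_n)$ in $S_{\!F}(x_0)$ has a subsequence converging weakly in $C(I,E)$ to some element of $S_{\!F}(x_0)$. I would first record the a~priori bound: if $u\in\fix(H)$ then $u=S(\cdot)x_0+V(f)$ for some $f\in N_F(u)$ with $|f(s)|\<\sup_{|x|\<\|u\|}\|F(s,x)\|^+$ a.e., so Lemma~\ref{mono} together with $(\A_1)$ and the strict inequality in $(\F_4)$ produce a radius $R_0$ with $S_{\!F}(x_0)\subset D_C(0,R_0)$. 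In particular every such selection is dominated by one fixed $g\in L^1(I,\R{}_+)$, so, fixing $f_n\in N_F(u_n)$ with $u_n=S(\cdot)x_0+V(f_n)$, the family $\{f_n:\n\}$ is uniformly integrable in $L^1(I,E)$. (Assumptions $(\A_2)$, $(\F_1)$, $(\F_3)$-$(\F_5)$ also ensure, via Remark~5, that $N_F$ is convex weakly compact valued and weakly upper semicontinuous.)

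The technical core is a De~Blasi measure-of-noncompactness estimate along $(u_n)$. Put $\nu(t):=\beta(\{u_n(t):\n\})$; by translation invariance, $\nu(t)=\beta(\{V(f_n)(t):\n\})$. Since $E$ is weakly compactly generated and $\{s\mapsto S(t-s)f_n(s):\n\}$ is uniformly integrable, Theorem~2.8 of \cite{kunze} on the behaviour of $\beta$ under integration applies and yields, using $\beta(S(t-s)\Omega)\<Me^{\omega T}\beta(\Omega)$ and then $(\F_5)$,
\[\nu(t)\<Me^{\omega T}\int_0^t\beta(\{f_n(s):\n\})\,ds\<Me^{\omega T}\int_0^t\eta(s)\,\nu(s)\,ds.\]
As $\nu$ is bounded and $\eta\in L^1(I)$, the Gronwall inequality forces $\nu\equiv 0$; hence $\{u_n(t):\n\}$ is relatively weakly compact for every $t$, and applying $(\F_5)$ once more, $\beta(\{f_n(s):\n\})\<\eta(s)\nu(s)=0$, so $\{f_n(s):\n\}$ is relatively weakly compact for a.a.\ $s\in I$. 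Theorem~\ref{ulger} then applies to the uniformly integrable family $\{f_n:\n\}$ and produces a subsequence $f_{n_k}\rightharpoonup f$ in $L^1(I,E)$. Since $V$ is bounded and linear (Lemma~\ref{mono}), hence weak-to-weak continuous, $u_{n_k}\rightharpoonup u_*:=S(\cdot)x_0+V(f)$ in $C(I,E)$, and in particular $u_{n_k}(t)\rightharpoonup u_*(t)$ for every $t$.

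It remains to verify $f\in N_F(u_*)$, for then $u_*\in\fix(H)=S_{\!F}(x_0)$ and we are done. I would first note that for a.a.\ $t$ the multimap $F(t,\cdot)\colon E\map E$ has convex weakly compact values (convexity by $(\F_1)$; weak compactness because $(\F_5)$ with $\Omega=\{x\}$ gives $\beta(F(t,x))=0$ and $(\F_3)$ makes $F(t,x)$ weakly closed) and satisfies \eqref{wuhc2}. The latter is seen by contradiction: if $x_m\rightharpoonup x$ with $\limsup_m\sigma(x^*,F(t,x_m))>\sigma(x^*,F(t,x))$ for some $x^*\in E^*$, choose $y_m\in F(t,x_m)$ almost attaining the suprema; $(\F_5)$ renders $\{y_m\}$ relatively weakly compact, so $y_m\rightharpoonup y$ along a subsequence, $(\F_3)$ gives $y\in F(t,x)$, yet $\langle x^*,y\rangle>\sigma(x^*,F(t,x))$ --- a contradiction. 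On the other hand, Mazur's lemma applied to $f_{n_k}\rightharpoonup f$ produces convex combinations of tails converging in $L^1(I,E)$, hence --- after passing to a further subsequence --- a.e.\ pointwise, so $f(t)\in\bigcap_{n=1}^\infty\overline{\co}\bigcup_{m=n}^\infty\{f_{n_m}(t)\}$ a.e., which is \eqref{Plis:6}. Combined with \eqref{Plis:4} ($u_{n_k}(t)\rightharpoonup u_*(t)$) and the trivial \eqref{Plis:5} ($f_{n_k}(t)\in F(t,u_{n_k}(t))$), a parametric version of Corollary~\ref{wuhc} --- applied for a.a.\ fixed $t$ to $F(t,\cdot)$, or equivalently the pointwise argument inside its proof --- gives $f(t)\in F(t,u_*(t))$ a.e.

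The step I expect to be the main obstacle is the measure-of-noncompactness estimate: pushing $\beta$ through the Volterra integral in a general non-reflexive WCG space genuinely needs \cite[Th.2.8]{kunze} (including the measurability of $t\mapsto\beta(\{f_n(t):\n\})$), and one must make sure the uniform integrability coming from $(\F_4)$ is really available. A secondary, routine point is that Corollary~\ref{wuhc} is stated for a single multimap $E\map E$, so in the last step it is used in its obvious parametric form; also, since $(\F_3)$ and $(\F_5)$ are assumed only for a.a.\ $t$, all pointwise manipulations must be performed on a common full-measure subset of $I$.
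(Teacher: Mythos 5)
Your proposal is correct and follows essentially the same route as the paper's proof: a priori bounds from $(\F_4)$, the De Blasi estimate via \cite[Th.2.8]{kunze} and Gronwall to kill $\beta(\{u_n(t)\})$, \"Ulger's theorem for weak $L^1$-compactness of the selections, and Corollary~\ref{wuhc} to identify the limit as a solution. The only cosmetic differences are that you obtain \eqref{Plis:6} via Mazur's lemma where the paper reuses the implication (i)$\Rightarrow$(v) from the proof of Corollary~\ref{convth}, and that the measurability of $t\mapsto\beta(\{u_n(t)\})$ needed for Gronwall (which you rightly flag) is settled in the paper by the strong equicontinuity of $S_{\!F}(x_0)$ coming from $(\A_2)$.
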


\begin{proof}
We claim that there are a priori bounds for $S_{\!F}(x_0)$. Indeed, assume that for every $\n$ there exists $x_n\in S_{\!F}(x_0)$ such that $|x_n|>n$. Let $x_n=S(\cdot)x_0+V(f_n)$ for some $f_n\in N_F(x_n)$. By $(\F_4)$ we have
\begin{align*}
1\<\overline{\lim_{n\to\infty}}\,\frac{||x_{n}||}{n}&\<\overline{\lim_{n\to\infty}}\,\frac{||S(\cdot)x_0+V(f_{n})||}{n}\<\overline{\lim_{n\to\infty}}\,\frac{Me^{\omega T}|x_0|+||V||_{\mathcal L}||f_{n}||_1}{n}\\&\<\lim_{n\to\infty}\,\frac{Me^{\omega T}|x_0|}{||x_{n}||}+Me^{\omega T}\overline{\lim_{n\to\infty}}\,\frac{\overline{\int\limits_I}\sup\limits_{|x|\<||x_{n}||}||F(t,x)||^+\,dt}{||x_{n}||}<1.
\end{align*}
Hence, the claim is validated.\par The solution set $S_{\!F}(x_0)$ is strongly equicontinuous in $C(I,E)$. Indeed, take an arbitrary $u\in S_{\!F}(x_0)$. Then $u=S(\cdot)x_0+V(f)$ for some $f\in N_F(u)$. Let $g\in L^1(I)$ be such that $\sup\limits_{|x|\<||S_{\!F}(x_0)||^+}||F(t,x)||^+\<g(t)$ a.e. on $I$. As one can see
\begin{equation}\label{equicontinuity}
\begin{split}
|u(t)-u(\tau)|&\<|S(t)x_0-S(\tau)x_0|+\f|\int_0^tS(t-s)f(s)\,ds-\int_0^\tau S(\tau-s)f(s)\,ds\g|\\&\<|S(t)x_0-S(\tau)x_0|+\int_0^\tau||S(t-s)-S(\tau-s)||_{\mathcal L}g(s)\,ds+Me^{\omega T}\int_\tau^tg(s)\,ds
\end{split}
\end{equation}
From Lebesgue's dominated convergence theorem and assumption $(\A_2)$ it follows that \[\lim_{t\to\tau}\sup_{u\in S_{\!F}(x_0)}|u(t)-u(\tau)|=0.\] \par Now, choose an arbitrary $\z{x_n}\subset S_{\!F}(x_0)$. Let $f_n\in N_F(x_n)$ be such that $x_n=S(\cdot)x_0+V(f_n)$. It is easy to see that the strong equicontinuity of $\x{x_n}$ implies continuity of the function $I\ni t\mapsto\beta(\z{x_n(t)})\in\R{}_+$ (compare \cite{szufla}). From \cite[Th.2.8.]{kunze} it follows that
\begin{align*}
\beta(\x{x_n(t)})&=\beta\f(\f\{S(t)x_0+V(f_n)(t)\g\}_{n=1}^\infty\g)\<\beta\f(\f\{\int_0^tS(t-s)f_n(s)\,ds\g\}_{n=1}^\infty\g)\\&\<\int_0^t||S(t-s)||_{\mathcal L}\beta(\x{f_n(s)})\,ds\<Me^{\omega T}\int_0^t\eta(s)\beta(\x{x_n(s)})\,ds
\end{align*}
for $t\in I$. By Gronwall's inequality, $\beta(\x{x_n(t)})=0$ for every $t\in I$. In particular, $\beta(\x{f_n(t)})=0$ a.e. on $I$. The family $\x{f_n}$ is uniformly integrable, since 
\begin{equation}\label{unifint}
\lim_{\ell(J)\to 0}\sup_\n\int\limits_J|f_n(t)|\,dt\<\lim_{\ell(J)\to 0}\overline{\int\limits_J}\sup_{|x|\<||S_{\!F}(x_0)||^+}||F(t,x)||^+\,dt\<\lim_{\ell(J)\to 0}\int\limits_J g(t)\,dt=0
\end{equation}
for some $g\in L^1(I)$. In view of Theorem \ref{ulger}. we can extract a subsequence, again denoted by, $\z{f_n}$ such that $f_n\xrightharpoonup[n\to\infty]{L^1(I,E)}f$.\par Observe that conditions $(\F_1)$, $(\F_5)$ together with the hypothesis regarding $w$-$w$ sequential closedness of $\Graph(F(t,\cdot))$ implies \eqref{wuhc2}. Put $x:=S(\cdot)x_0+V(f)=w$-$\lim\limits_{n\to\infty}S(\cdot)x_0+V(f_n)$. Then $x_n\xrightharpoonup[n\to\infty]{C(I,E)}x$. In particular, $x_n(t)\xrightharpoonup[n\to\infty]{E}x(t)$ for each $t\in I$. Therefore, assumptions \eqref{Plis:4}, \eqref{Plis:5} and \eqref{Plis:6} of Corollary \ref{wuhc}. are satisfied (the implication $f_n\xrightharpoonup[n\to\infty]{L^1(I,E)}f\Rightarrow$ \eqref{Plis:6} was proved earlier). Consequently, $f\in N_F(x)$ and $x\in S(\cdot)x_0+V\circ N_F(x)$, i.e. $x\in S_{\!F}(x_0)$.
\end{proof}

The main result regarding the existence of integrated solutions to the initial value problem \eqref{P} is contained in the following:
\begin{theorem}\label{existence}
Let $E$ be a WCG space. Assume that hypotheses $(\A_1)$, $(\F_1)$-$(\F_5)$ are satisfied. Then the solution set $S_{\!F}(x_0)$ of the Cauchy problem \eqref{P} is nonempty.
\end{theorem}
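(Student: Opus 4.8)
The plan is to realise $S_{\!F}(x_0)$ as a fixed point set and to exhibit a fixed point by the Ky Fan theorem (Theorem \ref{KyFan}) after restricting the problem to an appropriate weakly compact convex domain. Since $(\A_1)$, unlike $(\A_2)$, does not force the Volterra operator $V$ to carry bounded subsets of $L^1(I,E)$ onto equicontinuous --- hence relatively weakly compact --- subsets of $C(I,E)$, I would run the argument at the level of the right-hand sides, that is, in $L^1(I,E)$, where relative weak compactness is governed by the criterion of Theorem \ref{ulger} and is insensitive to equicontinuity. Put $\Psi:=S(\cdot)x_0+V\colon L^1(I,E)\to C(I,E)$ and $\w H:=N_F\circ\Psi\colon L^1(I,E)\map L^1(I,E)$; a fixed point $f^*\in\w H(f^*)$ gives $u^*:=\Psi(f^*)$ with $u^*\in S(\cdot)x_0+V(N_F(u^*))=H(u^*)$, hence $u^*\in\fix(H)=S_{\!F}(x_0)$.

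Next I would fix the domain. By $(\F_4)$ and the bound $||V||_{{\mathcal L}(L^1,C)}\<Me^{\omega T}$ of Lemma \ref{mono}, the strict inequality in the growth condition permits a choice of $R>0$ with \[Me^{\omega T}|x_0|+Me^{\omega T}\,\overline{\int\limits_I}\sup_{|x|\<R}||F(t,x)||^+\,dt\<R.\] Fix an $L^1$-majorant $g$ of $t\mapsto\sup_{|x|\<R}||F(t,x)||^+$ and let $\mathcal W_0:=\{f\in L^1(I,E)\colon|f(t)|\<g(t)\text{ a.e. on }I\}$. Then $\mathcal W_0$ is convex, closed, bounded, uniformly integrable, and nonempty --- by $(\F_2)$ it contains a strongly measurable selection $h_\star$ of $F(\cdot,0)$ --- and it is $\w H$-invariant, since $f\in\mathcal W_0$ forces $||\Psi(f)||\<R$, whence any $w\in N_F(\Psi(f))$ satisfies $|w(t)|\<g(t)$ a.e.

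The crucial step is to trim $\mathcal W_0$ to a weakly compact invariant subdomain via the standard measure-of-noncompactness iteration. Set $\mathcal W_{n+1}:=\overline{\co}\f(\{h_\star\}\cup\w H(\mathcal W_n)\g)\cap\mathcal W_n$, a decreasing sequence of nonempty (each contains $h_\star$) closed convex uniformly integrable subsets of $\mathcal W_0$. Using $(\F_5)$, the invariance of $\beta$ under translation, and Theorem 2.8 of \cite{kunze} on the behaviour of $\beta$ under integration, one obtains for a.a. $t$ \[\beta(\mathcal W_{n+1}(t))\<\beta\f(F\f(t,\Psi(\mathcal W_n)(t)\g)\g)\<\eta(t)\,\beta\f(\Psi(\mathcal W_n)(t)\g)\<\eta(t)Me^{\omega T}\int_0^t\beta(\mathcal W_n(s))\,ds,\] and, since $\beta(\mathcal W_1(\cdot))\<g(\cdot)\in L^1(I)$, iterating this Gronwall-type inequality yields $\beta(\mathcal W_n(t))\to 0$ for a.a. $t$. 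Hence $\mathcal W_\infty:=\bigcap_n\mathcal W_n$ is nonempty ($h_\star\in\mathcal W_\infty$), convex and weakly closed; it is uniformly integrable with $\beta(\mathcal W_\infty(t))=0$ a.e., so weakly compact by Theorem \ref{ulger}, and a routine induction shows $\w H(\mathcal W_\infty)\subset\mathcal W_\infty$.

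Finally, endow $\mathcal W_\infty$ with the (locally convex, Hausdorff) weak topology of $L^1(I,E)$. The map $\Psi$, being an affine perturbation of the bounded linear monomorphism $V$, is continuous from $(L^1(I,E),w)$ into $(C(I,E),w)$, and $N_F$ is weakly upper semicontinuous with nonempty convex weakly compact values; hence $\w H=N_F\circ\Psi\colon\mathcal W_\infty\map\mathcal W_\infty$ is upper semicontinuous with nonempty convex compact values on the nonempty compact convex set $\mathcal W_\infty$. Theorem \ref{KyFan} then supplies $f^*\in\w H(f^*)$, and $u^*:=\Psi(f^*)\in S_{\!F}(x_0)$, so $S_{\!F}(x_0)\neq\varnothing$. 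I expect the principal obstacle to be the third step --- securing compactness in the absence of equicontinuity. Passing to $L^1(I,E)$ makes Theorem \ref{ulger} available, but one must still push $\beta$ carefully through the operations $\overline{\co}(\cdot)$, $\w H(\cdot)$ and $\bigcap_n(\cdot)$ and handle the measurability of the sections $t\mapsto\beta(\mathcal W_n(t))$; the integration estimate of \cite[Th.2.8.]{kunze} combined with the Gronwall iteration is precisely the mechanism that makes this go through, just as in the proof of Lemma \ref{lemat1}.
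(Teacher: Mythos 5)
Your argument is correct and rests on the same three pillars as the paper's proof --- the Ky Fan theorem (Theorem \ref{KyFan}), \"Ulger's weak compactness criterion (Theorem \ref{ulger}) and the integral inequality for $\beta$ from \cite[Th.2.8.]{kunze} --- but the implementation differs in two genuine respects. First, you transpose the fixed point problem to $L^1(I,E)$, iterating $\w H=N_F\circ\Psi$ instead of $H=S(\cdot)x_0+V\circ N_F$ on $C(I,E)$; your diagnosis that under $(\A_1)$ alone one cannot expect equicontinuity of the image of $V\circ N_F$ is accurate, and it is worth noting that the paper's proof quietly makes the same move: weak compactness of $H(M_0)$ is obtained there not from equicontinuity but by pulling an arbitrary sequence back to $L^1(I,E)$, applying Theorem \ref{ulger}, and pushing forward through the weak-weak continuous operator $V$. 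Your formulation makes this the organizing principle from the start, which is arguably cleaner. Second, you build the compact invariant domain by the decreasing iteration $\mathcal{W}_{n+1}=\overline{\co}\f(\{h_\star\}\cup\w H(\mathcal{W}_n)\g)\cap\mathcal{W}_n$ together with a Gronwall-type decay of $\beta(\mathcal{W}_n(t))$, whereas the paper takes the minimal closed convex set $M_0$ satisfying $M_0=\overline{\co}\f(\{\hat{x}\}\cup H(M_0)\g)$ and kills $\beta$ in one stroke via the exponentially weighted measure $\beta_{L_0}$ and the contraction inequality $Me^{\omega T}\varphi(L_0)<1$; the two devices are interchangeable here. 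The one point where your sketch needs the same care the paper exercises is the estimate $\beta(\mathcal{W}_{n+1}(t))\<\eta(t)Me^{\omega T}\int_0^t\beta(\mathcal{W}_n(s))\,ds$: the inequality of \cite[Th.2.8.]{kunze} is invoked in the paper only for countable families, and the pointwise evaluation of an $L^1$-closed convex hull controls $\beta$ only up to a null set depending on the approximating sequence --- which is exactly why the paper defines $\beta_{L_0}$ through denumerable subsets and selects a sequence realizing the supremum. Reducing each $\mathcal{W}_n$ to suitable countable subsets repairs this, as you anticipate, so I regard it as a technicality rather than a gap.
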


\begin{proof}
Assume that $u_n\xrightharpoonup[n\to\infty]{C(I,E)}u$. It means in particular that $\sup\limits_{t\in I}\beta(\x{u_n(t)})=0$. Let $f_n\in N_F(u_n)$ for $\n$. Observe that the family $\x{f_n}$ is uniformly integrable, since the set $\x{u_n}$ is bounded. The latter follows by analogy to \eqref{unifint}. Taking into account the fact that $\beta(\x{f_n(t)})\<\eta(t)\beta(\x{u_n(t)})=0$ a.e. on $I$, we infer that $f_n\xrightharpoonup[n\to\infty]{L^1(I,E)}f$, passing to a subsequence if necessary. This results from Theorem \ref{ulger}. Moreover, assumptions of Corollary \ref{wuhc}. are satisfied and conclusion that $f\in N_F(u)$ follows. Since $S(\cdot)x_0+V(f_n)\in H(u_n)$ and $S(\cdot)x_0+V(f_n)\xrightharpoonup[n\to\infty]{C(I,E)}S(\cdot)x_0+V(f)\in H(u)$, we conclude that the restriction $H\colon(M,w)\map(C(I,E),w)$ is a convex compact valued upper semicontinuous map for every weakly compact $M\subset C(I,E)$. \par It is easy to show that the operator $H$ possesses an invariant ball $D_C(0,R)$. Assume to the contrary that for any $\n$ there exist $||u_n||\<n$ and $v_n\in H(u_n)$ such that $||v_n||>n$. Then
\begin{align*}
1\<\overline{\lim_{n\to\infty}}\,\frac{||v_n||}{n}&\<\overline{\lim_{n\to\infty}}\,\frac{||S(\cdot)x_0+V(N_F(u_n))||^+}{n}\<\overline{\lim_{n\to\infty}}\,\frac{Me^{\omega T}|x_0|+||V||_{\mathcal L}||N_F(u_n)||_1^+}{n}\\&\<\lim_{n\to\infty}\,\frac{Me^{\omega T}|x_0|}{n}+Me^{\omega T}\overline{\lim_{n\to\infty}}\,\frac{\overline{\int\limits_I}\sup\limits_{|x|\<n}||F(t,x)||^+\,dt}{||n||}<1,
\end{align*}
by $(\F_4)$.\par Assume that the radius $R>0$ is such that $H(D_C(0,R))\subset D_C(0,R)$. Fix $\hat{x}\in D_C(0,R)$ and define \[{\mathcal A}:=\f\{M\in 2^{D_C(0,R)}\setminus\{\varnothing\}\colon M\text{ is closed convex and } \overline{\co}\f(\{\hat{x}\}\cup H(M)\g)\subset M\g\}.\] Then the intersection $M_0:=\bigcap\limits_{M\in{\mathcal A}}M$ is nonempty ($D_C(0,R)\in{\mathcal A}$) and \[\overline{\co}\f(\{\hat{x}\}\cup H(M_0)\g)\subset\bigcap\limits_{M\in{\mathcal A}}\overline{\co}\f(\{\hat{x}\}\cup H(M)\g)\subset M_0.\] Since $\overline{\co}\f(\{\hat{x}\}\cup H(M_0)\g)\in{\mathcal A}$, we have $M_0\subset\overline{\co}\f(\{\hat{x}\}\cup H(M_0)\g)$. From that it follows the equality, $M_0=\overline{\co}\f(\{\hat{x}\}\cup H(M_0)\g)$.\par We claim that $M_0$ is weakly compact in $C(I,E)$. Since
\begin{equation}\label{phi(L)}
\varphi(L):=\sup_{t\in I}e^{-Lt}\int_0^t e^{Ls}\eta(s)\,ds\xrightarrow[L\to+\infty]{}0,
\end{equation}
we can always pick a constant $L_0>0$ so that $Me^{\omega T}\varphi(L_0)<1$. Let $\beta_{L_0}$ be a set function defined on the family of all bounded subsets of $C(I,E)$, given by the formulae: \[\beta_{L_0}(M):=\max\f\{\sup_{t\in I}e^{-L_0t}\beta(D(t))\colon D\subset M\;\text{denumerable}\g\}.\] Clearly, $\beta_{L_0}$ is a nonsingular measure of noncompactness on $C(I,E)$. Therefore, one can always choose a subset $\x{u_n}\subset H(M_0)$ in such a way that \[\sup_{t\in I}e^{-L_0t}\beta\f(\x{u_n(t)}\g)=\beta_{L_0}(H(M_0))=\beta_{L_0}(M_0).\] Let $u_n=S(\cdot)x_0+V(f_n)$ with $f_n\in N_F(v_n)$ and $v_n\in M_0$ for $\n$. Making use of \cite[Th.2.8.]{kunze} we can derive the following estimation
\begin{align*}
\sup_{t\in I}e^{-L_0t}\beta\f(\x{u_n(t)}\g)&=\sup_{t\in I}e^{-L_0t}\beta\f(\x{S(t)x_0+V(f_n)(t)}\g)\\&\<\sup_{t\in I}e^{-L_0t}\int_0^t||S(t-s)||_{\mathcal L}\beta(\x{f_n(s)})\,ds\\&\<\sup_{t\in I}e^{-L_0t}Me^{\omega T}\int_0^t\eta(s)\beta(\x{v_n(s)})\,ds\\&\<Me^{\omega T}\f(\sup_{t\in I}e^{-L_0t}\int_0^te^{L_0s}\eta(s)\,ds\g)\sup_{t\in I}e^{-L_0t}\beta\f(\x{v_n(t)}\g)\\&\<Me^{\omega T}\varphi(L_0)\sup_{t\in I}e^{-L_0t}\beta\f(\x{u_n(t)}\g).
\end{align*}
Whence, $\Gamma:=\sup\limits_{t\in I}e^{-L_0t}\beta\f(\x{u_n(t)}\g)=0$. Consider an arbitrary sequence $\z{x_n}\subset H(M_0)$. For each $\n$, there is $v_n\in M_0$ and $w_n\in N_F(v_n)$ such that $x_n=S(\cdot)x_0+V(w_n)$. Condition $(\F_5)$ implies \[e^{-L_0t}\beta(\x{w_n(t)})\<e^{-L_0t}\eta(t)\beta(\x{v_n(t)})\<\eta(t)\sup_{t\in I}e^{-L_0t}\beta(\x{v_n(t)})\<\eta(t)\Gamma.\] Thus, $\beta(\x{w_n(t)})=0$ a.e. on $I$. At the same time $\x{w_n}$ is uniformly integrable. Consequently, we may assume, passing to a subsequence if necessary, that $w_n\xrightharpoonup[n\to\infty]{L^1(I,E)}~w$. The latter entails, $x_n=S(\cdot)x_0+V(w_n)\xrightharpoonup[n\to\infty]{C(I,E)}S(\cdot)x_0+V(w)$, i.e. the set $H(M_0)$ is relatively weakly compact. The weak compactness of $\overline{\co}\f(\{\hat{x}\}\cup H(M_0)\g)$ follows by the Kre{\v\i}n-Smulian theorem. Therefore, $M_0$ is weakly compact.\par Summing up, by virtue of Theorem \ref{KyFan}. we infer that the convex compact valued upper semicontinuous set-valued map $H\colon(M_0,w)\map(M_0,w)$ possesses at least one fixed point. This fixed point constitutes a solution to the Cauchy problem \eqref{P}.
\end{proof}

\begin{theorem}[Leray-Schauder continuation theorem for weak topologies]\label{weak}\mbox{}\\
Let $E$ be a weakly normal $w^*$-separable Banach space and $U\subset E$ weakly open. Assume that $F\colon(\overline{U}^w,w)\map(E,w)$ is sequentially upper semicontinuous with weakly compact convex values such that 
\begin{equation}\label{rwc2}
\bigcup\limits_{\lambdaup\in[0,1]}\f\{x\in\overline{U}^w\colon x\in(1-\lambdaup)x_0+\lambdaup F(x)\g\}\;\text{ is rwc}.
\end{equation}
Assume also that there exists $x_0\in U$ such that 
\begin{equation}\label{monch}
M\subset\overline{U}^w,\;M\subset\overline{\co}\,(\{x_0\}\cup F(M))\Rightarrow M\text{ is rwc}
\end{equation}
 and that 
\begin{equation}\label{L-S}
\forall\,x\in\overline{U}^w\setminus U\,\forall\,\lambdaup\in[0,1]\;\;\;x\not\in(1-\lambdaup)x_0+\lambdaup F(x).
\end{equation}
Then $\fix(F)\neq\varnothing$.
\end{theorem}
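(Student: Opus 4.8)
The plan is to turn the boundary condition \eqref{L-S} into a genuine fixed point statement by a Leray--Schauder homotopy: one glues $F$ to the constant map $x_0$ by means of a weakly continuous Urysohn function and then solves the resulting inclusion by the very scheme used in the proof of Theorem~\ref{existence}, namely by producing a M\"onch-type minimal invariant set and invoking the Ky Fan theorem (Theorem~\ref{KyFan}) inside the locally convex space $(E,w)$. Throughout one uses that, $E^*$ being $w^*$-separable, every weakly compact — hence bounded — subset of $E$ is weakly metrizable, so that sequential and topological upper semicontinuity (and sequential and topological closedness) agree on such sets. First I would record the homotopy solution set $S:=\bigcup_{\lambdaup\in[0,1]}\{x\in\overline U^w:x\in(1-\lambdaup)x_0+\lambdaup F(x)\}$. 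It is nonempty ($x_0\in S$, taken at $\lambdaup=0$) and relatively weakly compact by \eqref{rwc2}; moreover, $\overline S^w\subset\overline U^w$ being weakly compact and weakly metrizable, $F|_{\overline S^w}$ is upper semicontinuous with weakly compact convex values, so $F(\overline S^w)$ is relatively weakly compact, in particular norm-bounded. I claim $S$ is weakly closed: if $S\ni x_n\rightharpoonup x$, write $x_n=(1-\lambdaup_n)x_0+\lambdaup_ny_n$ with $y_n\in F(x_n)\subset F(\overline S^w)$ and pass to a subsequence along which $\lambdaup_n\to\lambdaup$ and $y_n\rightharpoonup y$; when $\lambdaup>0$ one gets $x=(1-\lambdaup)x_0+\lambdaup y$ with $y\in F(x)$ by weak sequential closedness of $\Graph F$ (which follows from sequential upper semicontinuity together with weak compactness and convexity of the values, via a Hahn--Banach separation), and when $\lambdaup=0$ the boundedness of $F(\overline S^w)$ forces $\lambdaup_ny_n\to 0$ in norm, hence $x=x_0$. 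In either case $x\in S$, so (being weakly sequentially closed and dense in the metrizable set $\overline S^w$) $S=\overline S^w$ is weakly compact, and by \eqref{L-S} it is disjoint from $\overline U^w\setminus U$.

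Next I would build the glued multimap. The disjoint sets $S$ and $\overline U^w\setminus U$ are weakly closed in $\overline U^w$, the former weakly compact, so weak normality of $E$ provides a weakly open set $W\subset\overline U^w$ with $\overline U^w\setminus U\subset W$ and $\overline W\cap S=\varnothing$, together with a weakly continuous function $\varphi\colon\overline U^w\to[0,1]$ satisfying $\varphi\equiv 1$ on $S$ and $\varphi\equiv 0$ on $W$. Define $G\colon E\map E$ by $G(x):=\varphi(x)F(x)+(1-\varphi(x))\{x_0\}$ for $x\in\overline U^w$ and $G(x):=\{x_0\}$ for $x\in E\setminus\overline U^w$; the two prescriptions coincide on $\overline U^w\setminus U$, where $\varphi=0$. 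Then $G$ has nonempty convex weakly compact values and is sequentially weakly upper semicontinuous on all of $E$: on $U$ this is inherited from sequential upper semicontinuity of $F$ and continuity of $\varphi$; near any point of $\overline U^w\setminus U$ the map $G$ is identically $\{x_0\}$ on a weak neighbourhood, since $W$ is weakly open in $\overline U^w$; and on the weakly open set $E\setminus\overline U^w$ it is constant.

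Now I would repeat the construction from the proof of Theorem~\ref{existence}. The family $\mathcal M:=\{M\in 2^E\setminus\{\varnothing\}:M\text{ weakly closed and convex and }\overline{\co}(\{x_0\}\cup G(M))\subset M\}$ contains $E$, hence $M_0:=\bigcap_{M\in\mathcal M}M$ is nonempty (it contains $x_0$), weakly closed, convex, and satisfies $M_0=\overline{\co}(\{x_0\}\cup G(M_0))$. Since $G(x)\subset\overline{\co}(\{x_0\}\cup F(x))$ for $x\in\overline U^w$ while $G(x)=\{x_0\}$ elsewhere, it follows that $M_0\subset\overline{\co}(\{x_0\}\cup F(M_0\cap\overline U^w))$; writing $M':=M_0\cap\overline U^w\subset\overline U^w$, this gives $M'\subset\overline{\co}(\{x_0\}\cup F(M'))$, so $M'$ is relatively weakly compact by \eqref{monch}. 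Consequently $\overline{M'}^w$ is weakly metrizable, $F(\overline{M'}^w)$ is relatively weakly compact, and by the Kre{\v\i}n--Smulian theorem $\overline{\co}(\{x_0\}\cup\overline{F(M')}^w)$ is weakly compact; being a weakly closed subset of it, $M_0$ is weakly compact (hence weakly metrizable), and $M_0=\overline{\co}(\{x_0\}\cup G(M_0))$ gives $G(M_0)\subset M_0$. Therefore $G\colon(M_0,w)\map(M_0,w)$ is a convex compact valued upper semicontinuous self-map of a compact convex subset of the locally convex space $(E,w)$, and Theorem~\ref{KyFan} delivers a point $x^*\in G(x^*)$. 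If $x^*\notin\overline U^w$, then $G(x^*)=\{x_0\}$ forces $x^*=x_0\in U$, a contradiction; hence $x^*\in\overline U^w$ and $x^*\in(1-\varphi(x^*))x_0+\varphi(x^*)F(x^*)$, so $x^*\in S$, whence $\varphi(x^*)=1$ and thus $x^*\in F(x^*)$, i.e.\ $\fix(F)\neq\varnothing$.

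The two steps I expect to be delicate are the weak closedness of $S$ — where relative weak compactness of $F(\overline S^w)$ is precisely what disposes of the degenerate branch $\lambdaup\to 0^+$ — and the passage from the abstractly constructed $M_0$ to an honest weakly compact $G$-invariant set: one is forced to feed the auxiliary set $M_0\cap\overline U^w$, rather than $M_0$ itself, into hypothesis \eqref{monch}, and only afterwards can weak compactness of $M_0$ be recovered through Kre{\v\i}n--Smulian; the metrizability of $(M_0,w)$ then upgrades the merely sequential upper semicontinuity of $G$ to the form required by Theorem~\ref{KyFan}.
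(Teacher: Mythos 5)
Your argument is correct and follows the same overall architecture as the paper's proof: form the homotopy solution set $\Sigma$, show it is weakly closed and disjoint from $\overline{U}^w\setminus U$, glue $F$ to the constant map $x_0$ by a Urysohn function, run the M\"onch-type minimal invariant set construction (feeding $M_0\cap\overline{U}^w$ into \eqref{monch} and recovering weak compactness of $M_0$ via Kre\u{\i}n--\v{S}mulian), and apply a fixed point theorem on the resulting compact metrizable $M_0$. The two points where you deviate are both improvements in rigour or economy rather than changes of strategy. First, you prove the weak closedness of $\Sigma$ by hand, splitting off the degenerate branch $\lambdaup_n\to 0$ and using that $F$ maps the weakly compact metrizable set $\overline{\Sigma}^w$ onto a relatively weakly compact set; the paper instead asserts closedness from weak angelicity of $E$ and ``regularity of $F$'' without detail, and your route, which only uses the metrizability of weakly compact sets guaranteed by $w^*$-separability, is more self-contained. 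Second, you finish with the Ky Fan theorem (Theorem~\ref{KyFan}) on the compact convex set $(M_0,w)$, which is available because the glued map retains convex values; the paper instead treats $\hat F$ as a compact admissible map on the acyclic absolute extensor $(M_0,w)$ and invokes the Lefschetz-type theorem (Theorem~\ref{Lefschetz}), machinery that is not actually needed here. Your extra precaution of making the Urysohn function vanish on a whole weakly open neighbourhood $W$ of $\overline{U}^w\setminus U$ also makes the upper semicontinuity of the glued map at boundary points transparent, a point the paper passes over with ``clearly''.
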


\begin{proof}
Since the proof is straightforward and differs only slightly from the reasoning contained in \cite[Theorem 3.2]{regan}, we only sketch it. Define \[\Sigma:=\bigcup\limits_{\lambdaup\in[0,1]}\f\{x\in\overline{U}^w\colon x\in(1-\lambdaup)x_0+\lambdaup F(x)\g\}.\] Since $E$ is weakly angelic and $\Sigma$ is sequentially closed (due to the regularity of $F$), the latter must be closed in the weak topology of $E$. Moreover, $\Sigma\subset U$, thanks to \eqref{L-S}. $(\overline{U}^w,w)$ is a normal subspace of the weakly normal space $E$. Let $\theta\colon(\overline{U}^w,w)\to[0,1]$ be an Urysohn mapping joining the closed disjoint sets $\Sigma$ and $\overline{U}^w\setminus U$ i.e., $\res{\theta}{\Sigma}\equiv 1$ and $\res{\theta}{\overline{U}^w\setminus U}\equiv 0$. Put $D:=\overline{\co}(\{x_0\}\cup F(\overline{U}^w))$. Now, define an auxiliary map $\hat{F}\colon D\map D$ by 
\[\hat{F}(x):=
\begin{cases}
(1-\theta(x))x_0+\theta(x)F(x)&\text{for }x\in D\cap U\\
\{x_0\}&\text{for }x\in D\setminus U.
\end{cases}\]
Clearly, $\hat{F}\colon(D,w)\map(D,w)$ is sequentially upper semicontinuous and has weakly compact convex values. Consider $M=\overline{\co}(\{x_0\}\cup\hat{F}(M))$. Observe that \[M\cap U\subset\overline{\co}(\{x_0\}\cup\hat{F}(M))=\overline{\co}(\{x_0\}\cup F(M\cap U)).\] By \eqref{monch}, $M\cap U$ is rwc. In view of the Krein-Smulian theorem $M$ is weakly compact. In other words, the map $\hat{F}$ satisfies the following M\"onch type condition 
\begin{equation}\label{monch2}
M\subset D,\;M=\overline{\co}(\{x_0\}\cup\hat{F}(M))\Rightarrow M\text{ is weakly compact.}
\end{equation}
Define \[{\mathcal A}:=\f\{M\in 2^{D}\setminus\{\varnothing\}\colon M\text{ is closed convex and } \overline{\co}\f(\{x_0\}\cup\hat{F}(M)\g)\subset M\g\}.\] Then the intersection $M_0:=\bigcap\limits_{M\in{\mathcal A}}M$ is nonempty (for $D\in{\mathcal A}$) and $M_0=\overline{\co}\f(\{x_0\}\cup\hat{F}(M_0)\g)$. By \eqref{monch2}, $M_0$ is weakly compact. Furthermore, it is $\hat{F}$-invariant. Observe that $(M_0,w)$ is a compact metrizable space due to $w^*$-separability of $E$. Thus, the set-valued map $\hat{F}\colon(M_0,w)\map(M_0,w)$ is compact admissible. Since $(M_0,w)$ is also an acyclic absolute extensor for the class of metrizable spaces (Dugundji's theorem), the map $\hat{F}$ possesses a fixed point by virtue od \cite[Theorem 7.4]{gorn2}. Obviously, it is also a fixed point for $F$.
\end{proof}

\begin{corollary}
Let $E$ be a separable Banach space. Assume that hypotheses $(\A_1)$, $(\F_1)$-$(\F_3)$ and $(\F_5)$ are satisfied. Assume also that instead of $(\F_4)$ the following growth condition holds
\begin{equation}\label{mu}
||F(t,x)||^+\<\mu(t)(1+|x|)\;\;\text{a.e. on }I\text{ for every }x\in E\text{ with }\mu\in L^1(I).
\end{equation}
Then the solution set $S_{\!F}(x_0)$ of the Cauchy problem \eqref{P} is nonempty.
\end{corollary}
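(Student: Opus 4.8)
The plan is to reduce the assertion, exactly as in the paragraph preceding Lemma~\ref{lemat1}., to exhibiting a fixed point of the multioperator $H:=S(\cdot)x_0+V\circ N_F$ on $C(I,E)$, and to produce one via the Leray--Schauder continuation Theorem~\ref{weak}. This is why separability (and not merely the WCG property, which separability anyway implies) is assumed: it makes $C(I,E)$ separable, hence $w^*$-separable and weakly normal, so that the ambient space meets the requirements of Theorem~\ref{weak}. First I would observe that the sublinear bound \eqref{mu} takes over the role of $(\F_4)$ in every place where the latter served only to control $F$ on bounded sets --- it gives $\sup_{|x|\<r}\|F(t,x)\|^+\<(1+r)\mu(t)$ --- so that $N_F$ remains nonempty, convex, weakly compactly valued and weakly u.s.c., Lemma~\ref{mono}. is untouched, and the $w$-$w$ analysis carried out in the opening paragraph of the proof of Theorem~\ref{existence}. goes through verbatim, yielding that $H\colon(C(I,E),w)\map(C(I,E),w)$ is sequentially weakly u.s.c.\ with weakly compact convex values. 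I would then invoke Theorem~\ref{weak}. with $F:=H$, homotopy base point $0$, and $U:=C(I,E)$, whereupon $\overline{U}^w\setminus U=\varnothing$ and condition \eqref{L-S} is vacuously satisfied.

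The one genuinely new computation is the a~priori estimate underlying \eqref{rwc2}. If $u\in\lambdaup H(u)$ for some $\lambdaup\in[0,1]$, then $u=\lambdaup\bigl(S(\cdot)x_0+V(f)\bigr)$ with $f\in N_F(u)$, and Lemma~\ref{mono}. together with \eqref{mu} give, for $t\in I$, \[|u(t)|\<Me^{\omega T}(|x_0|+\|\mu\|_1)+Me^{\omega T}\int_0^t\mu(s)\,|u(s)|\,ds,\] so Gronwall's inequality bounds $\|u\|$ by a constant $R_0$ independent of $\lambdaup$ and $u$. Thus the homotopy solution set is bounded; to upgrade this to relative weak compactness I would argue exactly as in Lemma~\ref{lemat1}. and Theorem~\ref{existence}.: for a sequence $u_n=\lambdaup_n(S(\cdot)x_0+V(f_n))$ in that set, $\x{f_n}$ is uniformly integrable by \eqref{mu} and the bound $R_0$, the estimate \cite[Th.2.8.]{kunze} together with $(\F_5)$ and Gronwall give $\beta(\x{u_n(t)})=0$ for every $t\in I$, Theorem~\ref{ulger}. yields a subsequence with $f_n\rightharpoonup f$ in $L^1(I,E)$, and (after passing $\lambdaup_n$ to a convergent subsequence) $u_n$ converges weakly in $C(I,E)$ through the bounded linear $V$.

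It remains to verify the M\"onch type condition \eqref{monch}, and here I would reproduce the $\beta_{L_0}$ argument from the proof of Theorem~\ref{existence}. with $\mu$ in the place of $\eta$: choose $L_0>0$ with $Me^{\omega T}\psi(L_0)<1$, where $\psi(L):=\sup_{t\in I}e^{-Lt}\int_0^te^{Ls}\mu(s)\,ds\to 0$ as $L\to+\infty$ by the same reasoning as for $\varphi$ in \eqref{phi(L)}; the weighted bound $\|S(\cdot)x_0+V(f)\|_{L_0}\<Me^{\omega T}(|x_0|+\|\mu\|_1)+Me^{\omega T}\psi(L_0)\|u\|_{L_0}$ valid for $f\in N_F(u)$ then keeps every $M$ with $M\subset\overline{\co}(\{0\}\cup H(M))$ bounded, after which \cite[Th.2.8.]{kunze} and $(\F_5)$ force $\beta_{L_0}(M)=0$ and, once more through Theorem~\ref{ulger}., the bounded linear $V$ and the Kre{\v\i}n-Smulian theorem, $M$ is relatively weakly compact. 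With \eqref{rwc2}, \eqref{monch} and \eqref{L-S} all in place, Theorem~\ref{weak}. furnishes $\fix(H)\neq\varnothing$, i.e.\ $S_{\!F}(x_0)\neq\varnothing$.

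The only real obstacle, and the sole departure from the scheme of Theorem~\ref{existence}., is that \eqref{mu} --- unlike $(\F_4)$ --- provides no invariant norm ball: one has to recover an a~priori bound along the homotopy from Gronwall's inequality and to run the condensing estimates in the Bielecki-type weighted norm $\|\cdot\|_{L_0}$, which is precisely what neutralises the possibly large factor $Me^{\omega T}\|\mu\|_1$. Everything else is routine bookkeeping copied from the proofs of Theorems~\ref{existence}. and~\ref{weak}.
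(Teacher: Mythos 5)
Your argument is essentially the paper's own proof: the paper likewise reduces the corollary to $\fix(H)\neq\varnothing$ and applies Theorem~\ref{weak} with base point $0$, getting boundedness of $\Sigma=\{x\in C(I,E)\colon\exists\,\lambdaup\in[0,1]\;x\in\lambdaup H(x)\}$ from Gronwall, its relative weak compactness from the \cite[Th.2.8.]{kunze}/\"Ulger machinery of Theorem~\ref{existence}, and the M\"onch condition from the same source. The one place you deviate is the choice of $U$: you take $U=C(I,E)$ so that \eqref{L-S} is vacuous, whereas the paper fixes $\xi\in C(I,E)^*$ with $||\xi||\<1$ and a radius $M$ with $\Sigma\subset\xi^{-1}([-M,M])$, and sets $U:=\xi^{-1}((-M-1,M+1))$, so that \eqref{L-S} holds non-vacuously. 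This is harmless except at one point: with $U$ the whole space, \eqref{monch} formally ranges over arbitrary $M\subset C(I,E)$, and your claim that the weighted estimate $||H(u)||_{L_0}^+\<C+q\,||u||_{L_0}$ with $q<1$ ``keeps every $M$ with $M\subset\overline{\co}(\{0\}\cup H(M))$ bounded'' is not a proof --- if $s:=\sup_{u\in M}||u||_{L_0}=+\infty$, the resulting inequality $s\<C+qs$ is vacuous. What your computation actually gives is the M\"onch condition for \emph{bounded} $M$, which is exactly what the paper records and what is genuinely used; so either restrict \eqref{monch} to bounded sets (and note this suffices) or adopt the paper's $U$. Two further nits: your $\psi(L_0)$ is built from $\mu$, but the condensing step via $(\F_5)$ needs the analogous quantity $\varphi(L_0)$ built from $\eta$, so $L_0$ must be chosen large enough to make both smaller than $(Me^{\omega T})^{-1}$; and the weighted norm is not needed for the a~priori bound itself, since your Gronwall estimate already does that job.
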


\begin{proof}
Thanks to the exponential boundedness of the semigroup $\{S(t)\}_{t\geqslant 0}$ and the growth condition \eqref{mu} the set $\Sigma:=\{x\in C(I,E)\colon\exists\,\lambdaup\in[0,1]\;x\in\lambdaup H(x)\}$ is bounded (simply use the Gronwall inequality). Moreover, almost analogous reasoning to the one carried out in the proof of Theorem \ref{existence} (the paragraph in which we have shown that $M_0$ is weakly compact subset of $C(I,E)$) proves that $\Sigma$ is rwc. Fix a radius $M>0$ such that $\Sigma\subset D_C(0,M)$ and $\xi\in{\mathcal E}:=C(I,E)^*$ such that $||\xi||_{\mathcal E}\<1$. Then $\Sigma\subset \xi^{-1}([-M,M])$. Define $U:=\xi^{-1}((-M-1,M+1))$. Then $U$ is a weakly open neighbourhood of zero such that $\Sigma\cap\overline{U}^w\setminus U=\varnothing$. In particular Leray-Schauder boundary condition \eqref{L-S} is satisfied. In the proof of Theorem \ref{existence} we have also shown that \[M\subset C(I,E)\text{ bounded}, M\subset\overline{\co}(\{0\}\cup H(M))\Rightarrow M\text{ is rwc}.\] Hence, the M\"onch type condition \eqref{monch} is also satisfied. Since $H\colon(\overline{U}^w,w)\map(C(I,E),w)$ is convex weakly compact valued sequentially upper semicontinuous map and $C(I,E)$ is separable, $\fix(H)$ is nonempty in view of Theorem \ref{weak}.
\end{proof}

\par The eponymous topology of integrated solutions expresses itself in the following structure theorem, formulated in the context of a separable Banach space $E$.
\begin{theorem}\label{solset}
Let $E$ be a separable Banach space. Assume that $A\colon D(A)\to E$ satisfies $(\A_2)$ and $F\colon I\times E\map E$ satisfies $(\F_1)$-$(\F_5)$. Then the solution set of the Cauchy problem \eqref{P} is $R_\delta$.
\end{theorem}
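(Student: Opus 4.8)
The plan is to follow the classical scheme of Aronszajn and Browder--Gupta, transplanted into the weak topology: to present $S_{\!F}(x_0)$ as the intersection of a decreasing sequence of compact contractible sets and then read off the conclusion from the definition of an $R_\delta$-set. As a separable Banach space is WCG, Theorem \ref{existence}. and Lemma \ref{lemat1}. already yield that $S_{\!F}(x_0)=\fix(H)$, with $H:=S(\cdot)x_0+V\circ N_F$, is nonempty and weakly compact, and the a priori bound obtained in the proof of Theorem \ref{existence}. confines it to an $H$-invariant ball $D_C(0,R)$. Separability of $E$ furnishes a countable point-separating subset of $E^*$, which metrises the weak topology on every bounded subset of $C(I,E)$; hence $\f(D_C(0,R),w\g)$ is a compact convex metrisable subset of the locally convex space $\f(C(I,E),w\g)$, so an absolute retract for metrisable spaces by Dugundji's theorem, and in particular $\f(S_{\!F}(x_0),w\g)$ is compact metrisable. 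It thus remains to build compact contractible sets $W_1\supseteq W_2\supseteq\dotsb$ inside $\f(D_C(0,R),w\g)$ with $\bigcap_{\n}W_n=S_{\!F}(x_0)$.

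For a sequence $\eps_n\to 0^+$ I would regularise $F$ by the enlargements $F_n(t,x):=\overline{\co}\,B\f(F(t,B(x,\eps_n)),\eps_n\g)$, which nest downward, $\Graph\f(F(t,\cdot)\g)\subseteq\Graph\f(F_{n+1}(t,\cdot)\g)\subseteq\Graph\f(F_n(t,\cdot)\g)$, and keep the growth condition $(\F_4)$ and the De Blasi estimate $(\F_5)$ up to an $O(\eps_n)$ term. Exploiting the separability of $E$ --- hence of $L^1(I,E)$, together with the metrisability of the weak topology on the uniformly integrable, pointwise weakly compact subsets of $L^1(I,E)$ arising in the argument --- one constructs, by a Cellina-type argument, a Volterra-type single-valued map $g_n$, strongly measurable in $t$ and Lipschitz in the state variable, which is an $\eps_n$-approximation of $F$ with $g_n(t,x)\in\overline{\co}\,B\f(F(t,B(x,\eps_n)),\eps_n\g)$; by Lemma \ref{mono}. and the Volterra structure of $V$ the map $\Phi_n:=S(\cdot)x_0+V\circ g_n$ is then a generalised contraction of $C(I,E)$ in a Bielecki-type weighted norm $\|\cdot\|_*$ and so has a unique fixed point. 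Setting $W_n:=\f\{u\in D_C(0,R)\colon\|u-\Phi_n(u)\|_*\<\eps_n\g\}$, the Browder--Gupta homeomorphism $u\mapsto u-\Phi_n(u)$ identifies $W_n$ with a closed $\|\cdot\|_*$-ball, so $W_n$ is contractible; one arranges $g_n$ so that every solution of \eqref{P} is an $\eps_n$-fixed point of $\Phi_n$, whence $S_{\!F}(x_0)\subseteq W_n$ and $W_n\neq\varnothing$; the estimates behind \eqref{equicontinuity} combined with $(\A_2)$, $(\F_4)$, $(\F_5)$, the Arzel\`a--Ascoli criterion for the weak topology, and Theorem \ref{weak}. where the enlargement damages $(\F_4)$, guarantee that $W_n$ is weakly compact; finally $W_{n+1}\subseteq W_n$ follows from $\eps_{n+1}\<\eps_n$ together with the nesting of the $F_n$.

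It then remains to check $\bigcap_{\n}W_n=S_{\!F}(x_0)$, the inclusion ``$\supseteq$'' being settled above. Conversely, let $u\in\bigcap_{\n}W_n$; write $u=\Phi_n(u)+v_n$ with $\|v_n\|_*\to 0$ and put $w_n:=g_n(\cdot,u(\cdot))$, so that $u=S(\cdot)x_0+V(w_n)+v_n$ with $w_n(t)\in\overline{\co}\,B\f(F(t,B(u(t),\eps_n)),\eps_n\g)$ a.e.\ on $I$. By $(\F_4)$ and $(\F_5)$ the sequence $\x{w_n}$ is uniformly integrable with $\beta\f(\x{w_n(t)}\g)=0$ a.e., so Theorem \ref{ulger}. provides a subsequence with $w_n\rightharpoonup w$ in $L^1(I,E)$, and Corollary \ref{convth}. --- applied for a.a.\ $t$ to $F(t,\cdot)$, with the constant sequence $t\mapsto u(t)$ in the role of $(g_n)$ there --- gives $w(t)\in F(t,u(t))$ a.e., i.e.\ $w\in N_F(u)$. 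As $V$ is weak-to-weak continuous, $V(w_n)\rightharpoonup V(w)$ in $C(I,E)$, whereas $V(w_n)=u-S(\cdot)x_0-v_n\to u-S(\cdot)x_0$ in norm; hence $u=S(\cdot)x_0+V(w)\in H(u)$, that is $u\in S_{\!F}(x_0)$. This realises $S_{\!F}(x_0)$ as $\bigcap_{\n}W_n$ with $(W_n)_{\n}$ a decreasing sequence of compact contractible metric spaces, so $S_{\!F}(x_0)$ is $R_\delta$.

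I expect the genuine difficulty to lie in the second step. The perturbation is only weakly--weakly sequentially closed-graphed and has no norm regularity, so producing the Volterra-type, state-Lipschitz approximations $g_n$ that are at once large enough to force $S_{\!F}(x_0)\subseteq W_n$, tight enough --- in the weak, Convergence-Theorem sense of Corollary \ref{convth}. rather than in Hausdorff distance --- to make $\bigcap_{\n}W_n$ shrink back exactly onto $S_{\!F}(x_0)$, and Lipschitz enough to drive the Bielecki contraction estimate, is delicate; so is running the Browder--Gupta homeomorphism without ever leaving the weak topology, in which $C(I,E)$ and $L^1(I,E)$ are metrisable only on their weakly compact, uniformly integrable pieces, so that each $W_n$ is a bona fide compact contractible metric space to which the definition of an $R_\delta$-set applies.
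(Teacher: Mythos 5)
Your overall template (a decreasing intersection of compact contractible metric spaces, with the weak topology metrised on bounded sets by a countable norming subset of $E^*$) is the right one and matches the paper's. But the way you realise the sets $W_n$ has a gap that I do not see how to close. You define $W_n$ as the set of $\eps_n$-approximate fixed points of a \emph{single-valued} operator $\Phi_n=S(\cdot)x_0+V\circ g_n$ and then claim one can ``arrange $g_n$ so that every solution of \eqref{P} is an $\eps_n$-fixed point of $\Phi_n$''. For a genuinely multivalued $F$ this cannot be arranged: if $u=S(\cdot)x_0+V(f)$ with $f\in N_F(u)$, then $\|u-\Phi_n(u)\|_*=\|V(f-g_n(\cdot,u(\cdot)))\|_*$, and although $g_n(t,u(t))$ lies in the $\eps_n$-enlargement of $F(t,B(u(t),\eps_n))$, the selection $f(t)$ may sit at the opposite end of the (possibly large-diameter) set $F(t,u(t))$; the difference is then of order $\diam F$ integrated over $I$, not of order $\eps_n$. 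This is precisely why the paper keeps the approximations \emph{multivalued}: it takes $F_n\supseteq\w F\supseteq F$ built from a partition of unity, sets $W_n:=\fix(H_n)$ with $H_n=S(\cdot)x_0+V\circ N_{F_n}$ (so that $S_{\!F}(x_0)\subseteq\fix(H_n)$ is automatic), and proves contractibility of $\fix(H_n)$ separately, via a translation-along-trajectories homotopy driven by a single-valued selection $f_n\in F_n$ and a Gronwall uniqueness argument. Two further points: intersecting your $W_n$ with $D_C(0,R)$ already destroys the claim that $u\mapsto u-\Phi_n(u)$ carries $W_n$ onto a full closed ball, so contractibility would not follow even in the single-valued case without extra work.

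The second, independent, gap is topological. The Browder--Gupta homeomorphism and the Bielecki contraction estimate live in the \emph{norm} topology of $C(I,E)$, whereas the compactness of $W_n$ you need for the $R_\delta$ definition is \emph{weak} compactness, and the contracting homotopy must be continuous for the weak (i.e. $d$-metric) topology on $W_n$. A state-Lipschitz $g_n$ (Lipschitz for the norm of $E$) gives no weak-to-weak continuity of $\Phi_n$, hence no continuity of the candidate contraction $(s,u)\mapsto(I-\Phi_n)^{-1}\f(s(u-\Phi_n(u))\g)$ on $(W_n,w)$. The paper resolves this by first constructing a weakly compact convex invariant set $X$ (property \eqref{X2}), replacing $F$ by $\w F(t,x)=\overline{\co}\,F(t,P(t,x))$ with $P$ the $d$-metric projection onto $\overline{X(t)}$ (checking $S_{\w F}(x_0)=S_{\!F}(x_0)$), and only then building selections $f_n$ that are Lipschitz \emph{with respect to the metric $d$} on weakly compact sets --- the estimate $|f_k(t,x_1)-f_k(t,x_2)|\<\gamma_C\,\|F(t,X(t))\|^+\,d(x_1,x_2)$ is what makes the uniqueness Gronwall argument and the homotopy run entirely inside the weak topology. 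Your proposal omits this compression onto $X$ entirely, and without it the $d$-metric carries no continuity information about $F$ outside weakly compact sets, so the whole approximation scheme has nothing to bite on.
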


\begin{remark}
For all we know, regarding our proof, the topological assumption about the separability of the space $E$ is indispensable. Application of \cite[Th.2.8.]{kunze} requires us to assume that $E$ is a weakly compactly generated Banach space. On the other hand the metrization theorem \cite[Prop.3.107]{fabian} holds for $E^*$ being $w^*$-separable. However, in view of \cite[Th.13.3]{fabian} a WCG space $E$ with $w^*$-separable dual $E^*$ must be compactly generated.
\end{remark}

\begin{remark}
The assumption regarding the equicontinuity of the semigroup $\{S(t)\}_{t\geqslant 0}$ is not in fact excessively restrictive. This is still a weaker requirement than assuming that $A$ satisfies the Hille-Yosida condition, which characterizes generators of locally Lipschitz continuous integrated semigroups.
\end{remark}

\begin{proof}
\par If the Banach space $E$ is separable, then the topological dual $E^*$ furnished with the $w^*$-topology $\sigma(E^*,E)$ is also separable. Suppose that $\x{x_n^*}$ is a countable $\sigma(E^*,E)$-dense subset of the unit sphere in $E^*$. Using this sequence, we are allowed to define a metric $d$ on $E$ in the following way:
\begin{equation}\label{d}
d(x,y):=\sum_{n=1}^\infty 2^{-n}|\langle x^*_n,x-y\rangle|.
\end{equation}
Clearly, this $d$-metric topology is weaker then the weak topology $\sigma(E,E^*)$ on $E$. Moreover, the $d$-metric topology and the weak topology coincide on the weakly compact subsets of $E$ (cf.\cite[Prop.3.107]{fabian}).\par We claim that there is a nonempty weakly compact convex set $X\subset C(I,E)$ possessing the following property:
\begin{equation}\label{X2}
S(t)x_0+\int_0^tS(t-s)\,\overline{\co}F(s,\overline{X(s)})\,ds\subset X(t)\;\;\text{for every }t\in I.
\end{equation}
Let $X_0=D_C(0,R)$ and $X_n=\overline{Y_n}$, where $R>0$ is such that $||S_{\!F}(x_0)||^+\<R$. Put \[Y_n=\left\{y\in C(I,E)\colon y(t)\in S(t)x_0+\int_0^tS(t-s)\,\overline{\co}F(s,\overline{X_{n-1}(s)})\,ds\mbox{ for }t\in I\right\}.\]
One easily sees that sets $X_n$ are well-defined nonempty bounded convex and equicontinuous (equicontinuity follows by \eqref{equicontinuity}, remaining properties are justified in \cite[Th.6.]{pietkun}). Moreover, $S_F(x_0)\subset X:=\bigcap\limits_{n=0}^\infty X_n$.\par Using the Castaing representation for the Hausdorff continuous multimap $t\mapsto\overline{X_n(t)}$, we may write $\overline{X_n(t)}=\overline{\{u_k(t)\}_{k=1}^\infty}$ with $\{u_k(t)\}_{k=1}^\infty\subset Y_n(t)$. Bearing in mind that $\lim\limits_{L\to\infty}\varphi(L)=0$, where $\varphi$ is a mapping given by \eqref{phi(L)}, we choose $L_0>0$ so that $Me^{\omega T}\varphi(L_0)<1$. Let $u_k=S(\cdot)x_0+V(f_k)$ for some $f_k\in N_{\overline{co}\,F}\f(\overline{X_{n-1}(\cdot)}\g)$. In view of \cite[Th.2.8.]{kunze}, we have
\begin{align*}
\sup_{t\in I}e^{-L_0t}\beta(X_n(t))&=\sup_{t\in I}e^{-L_0t}\beta\f(\{u_k(t)\}_{k=1}^\infty\g)=\sup_{t\in I}e^{-L_0t}\beta(\{S(t)x_0+V(f_k)(t)\})\\&\<\sup_{t\in I}e^{-L_0t}Me^{\omega t}\!\!\int_0^t\!\beta\f(\{f_k(s)\}_{k=1}^\infty\g)\,ds\<Me^{\omega t}\sup_{t\in I}e^{-L_0t}\!\!\int_0^t\!\eta(s)\beta(X_{n-1}(s))\,ds\\&\<Me^{\omega t}\sup_{t\in I}e^{-L_0t}\int_0^te^{L_0s}\eta(s)\,ds\sup_{t\in I}e^{-L_0t}\beta(X_{n-1}(t))\\&=Me^{\omega t}\varphi(L_0)\sup_{t\in I}e^{-L_0t}\beta(X_{n-1}(t)).
\end{align*}
Clearly, $\sup\limits_{t\in I}e^{-L_0t}\beta(X_n(t))\xrightarrow[n\to\infty]{}0$, which means that $\sup\limits_{t\in I}e^{-L_0t}\beta(X(t))=0$.\par Since $\beta(X(I))=\beta\f(\bigcup\limits_{t\in I}X(t)\g)=\sup\limits_{t\in I}\beta(X(t))$ (by \cite[Lem.2.]{szufla}), the topological subspace $\f(\overline{X(I)},\sigma(E,E^*)\g)$ is metrizable by $d$ (defined by \eqref{d}). We claim that $X$ is contained in a compact subspace of the space $C\f(I,\f(\overline{X(I)},d\g)\g)$ furnished with the topology of uniform convergence. Take $\eps>0$. There exists $n_0\in\mathbb{N}$ such that for all $n\geqslant n_0$ and every $t,\tau\in I$ we have \[\sup_{x\in X}\sum_{n=n_0}^\infty 2^{-n}|\langle x^*_n,x(t)-x(\tau)\rangle|<\eps/2.\] On the other hand, there exists $\delta_i>0$ for $i\in\{1,\ldots,n_0-1\}$ such that for every $t,\tau\in I$ with $|t-\tau|<\delta_i$ we have \[2^{-i}\sup_{x\in X}|\langle x_i^*,x(t)-x(\tau)\rangle|\<\sup_{x\in X}|\langle x_i^*,x(t)-x(\tau)\rangle|<\eps/2(n_0-1).\] Whence, \[\sup_{x\in X}d(x(t),x(\tau))\<\sup_{x\in X}\sum_{n=1}^{n_0-1}2^{-n}|\langle x_n^*,x(t)-x(\tau)\rangle|+\sup_{x\in X}\sum_{n=n_0}^{\infty}2^{-n}|\langle x_n^*,x(t)-x(\tau)\rangle|<\eps\] for every $t,\tau\in I$ such that $|t-\tau|<\delta=\min\limits_{1\<i\<n_0-1}\delta_i$. In other words, $X$ is equicontinuous with respect to $d$. At the same time, the cross-section $X(t)$ is relatively compact in $\f(\overline{X(I)},d\g)$ for every $t\in I$. Consequently, $X$ is relatively compact in $C\f(I,\f(\overline{X(I)},d\g)\g)$, by virtue of Ascoli's theorem.\par Observe that the inclusion mapping $i\colon C\f(I,\f(\overline{X(I)},d\g)\g)\hookrightarrow(C(I,E),w)$ is continuous. Therefore, $X$ is relatively weakly compact in $C(I,E)$. In fact, $X$ is weakly compact, since it is weakly closed. Consequently, $X(I)$ is weakly compact as well. Property \eqref{X2} easily follows from the fact that
\[\left\{y\in C(I,E)\colon y(t)\in S(t)x_0+\int_0^tS(t-s)\overline{\co}F(s,\overline{X(s)})\,ds\mbox{ for }t\in I\right\}\subset Y_n\] for every $\n$.\par For $A\subset E$, let $\dist(x,A):=\inf\limits_{y\in A}d(x,y)$. By $P\colon I\times E\map E$ we will denote the $d$-metric projection on the subset $\overline{X(t)}$, i.e. \[P(t,x):=\f\{y\in\overline{X(t)}\colon d(x,y)=\dist(x,X(t))\g\}.\] Since $\overline{X(t)}$ is $d$-compact, $P(t,x)$ must be nonempty. Relying on the weak compactness of the designed set $X$ we define an auxiliary multimap $\w{F}\colon I\times E\map E$ by the formula \[\w{F}(t,x):=\overline{\co}\,F(t,P(t,x)).\] Property \eqref{X2} plays a key role in proving that $S_{\w{F}}(x_0)=S_F(x_0)$. One can show that $\w{F}$ satisfies conditions $(\F_1)$-$(\F_5)$. Clearly, $\w{F}$ is integrably bounded and the map $\w{F}(t,\cdot)$ is weakly compact a.e. on $I$.\par Properties $(\F_2)$ and $(\F_3)$ require some commentary. Firstly, observe that the metric space $(E,d)$ is separable. Since $t\mapsto X(t)$ is Hausdorff continuous in the norm topology of $E$, $X(\cdot)\colon I\map(E,d)$ is measurable and $I\ni t\mapsto\dist(x,X(t))+\frac{1}{n}\in\R{}$ is continuous. Thus, $G_n\colon I\map(E,d)$ given by $G_n(t):=\f\{y\in E\colon d(x,y)\<\dist(x,X(t))+\frac{1}{n}\g\}$ is weakly measurable. Notice that \[P(t,x)=X(t)\cap\bigcap_{n=1}^\infty G_n(t).\] In view of \cite[Th.4.1.]{himmelberg}, the set-valued map $P(\cdot,x)\colon I\map(E,d)$ is measurable. Consequently, the codomain restriction $P(\cdot,x)\colon I\map(X(I),d)$ of $P(\cdot,x)$ constitutes a measurable multimap. Since $(X(I),d)$ is a Polish space, there exists a measurable $p_x\colon I\to(X(I),d)$ such that $p_x(t)\in P(t,x)$ for $t\in I$ (\cite[Th.5.1]{himmelberg}). Consider a sequence $(p_n\colon I\to X(I))_{n=1}^\infty$ of simple functions such that $d(p_n(t),p_x(t))\xrightarrow[n\to\infty]{}0$ a.e. on $I$, i.e. $p_n(t)\xrightharpoonup[n\to\infty]{E}p_x(t)$ a.e. on $I$. In accordance with $(\F_2)$, there exists a measurable $f_n\colon I\to(E,|\cdot|)$ such that $f_n(t)\in F(t,p_n(t))$ a.e. on $I$. In view of $(\F_4)$ the family $\x{f_n}$ is uniformly integrable. By $(\F_5)$ the cross-section $\x{f_n(t)}$ is relatively weakly compact in $E$. Therefore, $\z{f_n}$ is relatively weakly compact in $L^1(I,E)$, by Theorem \ref{ulger}. Assume that $f_n\xrightharpoonup[n\to\infty]{L^1(I,E)}f$, passing to a subsequence if necessary. According to Corollary \ref{wuhc}, $f(t)\in F(t,p_x(t))$ a.e. on $I$. Hence, $\w{F}(\cdot,x)$ has a measurable selection.\par Let $x_n\xrightharpoonup[n\to\infty]{E}x$. Fix $x^*\in E^*$. Obviously, there exists $z_n\in P(t,x_n)$ such that \[\sigma(x^*,F(t,P(t,x_n)))=\sigma(x^*,F(t,z_n))\;\;\text{ for }\n.\] From the very definition of $P$ follows that there is a subsequence $z_{k_n}\xrightharpoonup[n\to\infty]{E}z\in P(t,x)$. Thus, 
\begin{align*}
\overline{\lim_{n\to\infty}}\sigma(x^*,F(t,P(t,x_{k_n})))&=\overline{\lim_{n\to\infty}}\sigma(x^*,F(t,z_{k_n}))\<\sigma(x^*,F(t,z))\<\sigma(x^*,F(t,P(t,x)))\\&=\sigma(x^*,\w{F}(t,x))
\end{align*}
and eventually \[\overline{\lim_{n\to\infty}}\sigma(x^*,\w{F}(t,x_n))\<\sigma(x^*,\w{F}(t,x)).\] The latter means that $\w{F}$ satisfies $(\F_3)$.\par Let us define a set-valued approximation $F_n\colon I\times E\map E$ of the map $\w{F}$ in a routine manner, i.e. \[F_n(t,x):=\sum_{y\in E}\psi^n_y(x)\,\overline{\co}\,\w{F}(t,B_d(y,2r_n)),\] where $r_n:=3^{-n}$, $B_d(y,2r_n)$ is the ball considered in the metric space $(E,d)$ and the family $\f\{\psi^n_y\colon(E,d)\to[0,1]\g\}_{y\in E}$ is a locally Lipschitz partition of unity whose supports form a locally finite covering inscribed into the covering $\f\{B_d(y,r_n)\g\}_{y\in E}$ of the space $(E,d)$. Moreover, for every $\n$ define a mapping $f_n\colon I\times E\to E$ in the following way:\[f_n(t,x):=\sum_{y\in E}\psi^n_y(x)g_y(t)\in F_n(t,x),\] where $g_y$ is a measurable selection of $\w{F}(\cdot,y)$.\par If $H_n\colon C(I,E)\map C(I,E)$ is an operator given by $H_n:=S(\cdot)x_0+V\circ N_{F_n}$, then the topological space $(\fix(H_n),\sigma(C(I,E),C(I,E)^*))$ is $d$-compact metrizable. Indeed. Observe that $\varnothing\neq S_{\!F}(x_0)=S_{\!\w{F}}(x_0)\subset\fix(H_k)$, by Theorem \ref{existence}. and \eqref{X2}. Let $\z{u_n}\subset\fix(H_k)$. Then $u_n=S(\cdot)x_0+V(f_n)$, where $f_n\in N_{F_k}(u_n)$. Let's remind that $F_k(t,x)\subset\overline{\co}\w{F}(t,B(x,3r_k))$. Therefore,\[|f_n(t)|\<||F_k(t,u_n(t))||^+\<||\overline{\co}\w{F}(t,B(u_n(t),3r_k))||^+\<||F(t,X(t))||^+\] a.e. on $I$ and
\[\lim_{\ell(J)\to 0}\sup_\n\int\limits_J|f_n(t)|\,dt\<\lim_{\ell(J)\to 0}\overline{\int\limits_J}\sup_{|x|\<||X||^+}||F(t,x)||^+\,dt\<\lim_{\ell(J)\to 0}\int\limits_J g(t)\,dt=0,\] for some $g\in L^1(I)$.
On the other hand, \[\beta\f(\x{f_n(t)}\g)\<\beta\f(\overline{\co}\w{F}\f(t,B(\x{u_n(t)},3r_k)\g)\g)\<\beta(F(t,X(t)))\<\eta(t)\beta(X(t))\] for a.a. $t\in I$. In view of Theorem \ref{ulger}. the sequence $\z{f_n}$ is relatively weakly compact in $L^1(I,E)$. Hence we may assume, passing to a subsequence if necessary, that $f_n\xrightharpoonup[n\to\infty]{L^1(I,E)}f$. As a result, $u_n\xrightharpoonup[n\to\infty]{C(I,E)}u:=S(\cdot)+V(f)$. We would be done, if we only could demonstrate that $f\in N_{F_k}(u)$. Consider $x_n\xrightharpoonup[n\to\infty]{E}x$. Since $\overline{\co}\,\w{F}(t,B(y,2r_k))\subset\overline{\co}\,F(t,X(t))$ and $F(t,\cdot)$ is quasicompact in the weak topology, the map $F_k$ has weakly compact values by the Kre\v{\i}n-Smulian theorem. Hence, there exists $y_n\in F_k(t,x_n)$ such that $\sigma(x^*,F_k(t,x_n))=\langle x^*,y_n\rangle$ for $\n$ and some fixed $x^*\in E^*$. Since $\overline{\co}\,F(t,X(t))$ is also weakly compact, $y_n\xrightharpoonup[n\to\infty]{E}y$, up to a subsequence. Moreover, for every $m\geqslant 1$ there exists $z_m\in\co\{y_n\}_{n=m}^\infty$ such that $z_m\xrightarrow[m\to\infty]{E}y$. From the very definition of $F_k$ it follows that there exists $\gamma>0$ such that for all $x_1,x_2\in X(I)$ 
\begin{align*}
h(F_k(t,x_1),F_k(t,x_2))&\<\sum_{y\in E}|\psi^n_y(x_1)-\psi^n_y(x_2)|\,||\,\overline{\co}\,\w{F}(t,B_d(y,2r_k))||^+\\&\<\gamma\,||F(t,X(t))||^+\,d(x_1,x_2)
\end{align*} 
a.e. on $I$. Therefore, \[\lim_{n\to\infty}\inf_{z\in F_k(t,x)}|y_n-z|\<\lim_{n\to\infty}h(F_k(t,x_n),F_k(t,x))=0,\] i.e. \[\forall\,\eps>0\,\exists\,N\in\mathbb{N}\,\forall\,n\geqslant N\;\;\;y_n\in B(F_k(t,x),\eps).\] Whence, \[\forall\,\eps>0\,\exists\,N\in\mathbb{N}\,\forall\,m\geqslant N\;\;\;z_m\in B(F_k(t,x),\eps)\] and eventually $y\in D(F_k(t,x),\eps)$ for every $\eps>0$. This means that condition \eqref{wuhc2} in Corollary \ref{wuhc}. is met, since \[\limsup_{n\to\infty}\sigma(x^*,F_k(t,x_n))=\lim_{n\to\infty}\langle x^*,y_n\rangle=\langle x^*,y\rangle\<\sigma(x^*,F_k(t,x)).\] Now, since
\[\begin{cases}
u_n(t)\xrightharpoonup[n\to\infty]{E}u(t)&\text{for }t\in I\\
f_n\xrightharpoonup[n\to\infty]{L^1(I,E)}f&\\
f_n(t)\in F_k(t,u_n(t))&\text{a.e. on }I\\
\end{cases}\] 
we infer finally that $f(t)\in F_k(t,u(t))$ for a.a. $t\in I$. Therefore $\fix(H_k)$ is weakly compact and forms a $d$-compact metrizable subspace of the separable space $C(I,E)$.\par It is easy to see that $S_{\!\w{F}}(x_0)=\bigcap\limits_{n=1}^\infty\fix(H_n)$. The inclusion ''$\subset$'' is self-evident, since $\w{F}(t,x)\subset F_n(t,x)$. Let us take, then $u\in\bigcap\limits_{n=1}^\infty\fix(H_n)$. Suppose that $u=S(\cdot)x_0+V(f_n)$ with $f_n\in N_{F_n}(u)$. In analogous manner as previously we can prove that the sequence $\z{f_n}$ is relatively weakly compact in $L^1(I,E)$. Thus we may assume, passing to a subsequence if necessary, that $S(\cdot)x_0+V(f_n)\xrightharpoonup[n\to\infty]{C(I,E)}S(\cdot)x_0+V(f)$. Consider a subsequence $\z{f_{k_n}(t)}$ such that $f_{k_n}(t)\xrightharpoonup[n\to\infty]{E}z$. Fix $x^*\in E^*$. There exists $z_n\in\overline{P(t,B(u(t),3r_{k_n}))}^w$ such that $\sigma\f(x^*,F\f(t,\overline{P(t,B(u(t),3r_{k_n}))}^w\g)\g)=\sigma(x^*,F(t,z_n))$ for $\n$. Since $z_n=$w-$\lim\limits_{m\to\infty}y^n_m$ with $y_m^n\in P(t,B(u(t),3r_{k_n}))$, there is $w_m^n\in B(u(t),r_{k_n})$ such that $y_m^n\in P(t,w_m^n)$. The diagonalization procedure allows extraction of a subsequence $\z{y^n_{m_n}}$, which satisfies $d(z_n,y^n_{m_n})<\frac{1}{n}$. The strong convergence $w^n_{m_n}\xrightarrow[n\to\infty]{E}u(t)$ entails, passing to a subsequence if necessary, that $y^n_{m_n}\xrightharpoonup[n\to\infty]{E}y\in P(t,u(t))$. Hence, $z_n\xrightharpoonup[n\to\infty]{E}y\in P(t,u(t))$. Considering that $F(t,\cdot)$ is weakly sequentially uhc, we obtain
\begin{align*}
\langle x^*,z\rangle&=\lim_{n\to\infty}\langle x^*,f_{k_n}(t)\rangle\<\liminf_{n\to\infty}\sigma(x^*,\w{F}(t,B(u(t),3r_{k_n})))\\&=
\liminf_{n\to\infty}\sigma(x^*,F(t,P(t,B(u(t),3r_{k_n}))))\<\liminf_{n\to\infty}\sigma\f(x^*,F\f(t,\overline{P(t,B(u(t),3r_{k_n}))}^w\g)\g)\\&=\limsup_{n\to\infty}\sigma(x^*,F(t,z_n))\<\sigma(x^*,F(t,y))\<\sigma(x^*,F(t,P(t,u(t))))=\sigma(x^*,\w{F}(t,u(t))).
\end{align*}
Whence $z\in F(t,u(t))$, which means that $\overline{\co}\,$w-$\limsup\limits_{n\to\infty}\,\{f_n(t)\}\subset F(t,u(t))$. In view of \cite[Prop.2.3.31]{papa} it is clear that $f(t)\in F(t,u(t))$ a.e. on $I$. Thus $u=S(\cdot)x_0+V(f)\in S(\cdot)x_0+(V\circ N_F)(u)$, proving that $\bigcap\limits_{n=1}^\infty\fix(H_n)\subset S_{\!\w{F}}(x_0)$.\par Fix $\K$. Observe that for each $x\in\fix(H_k)$ there exists exactly one $f\in N_{F_k}(x)$ such that $x=S(\cdot)x_0+V(f)$. This follows directly from the fact that $x$ as an integrated solution has the form $x(t)=tx_0+A\int_0^tx(s)\,ds+\int_0^t(t-s)f(s)\,ds$ for $t\in I$. Since the univalent map $f_k\colon I\times E\to E$ satisfies conditions $(\F_1)$-$(\F_5)$, the integral equation 
\begin{equation}\label{equhomo}
u(t)=S(t)x_0+\int_0^\tau S(t-s)f(s)\,ds+\int_\tau^t S(t-s)f_k(s,u(s))\,ds\;\;\text{for }t\in[\tau,T],
\end{equation}
with $f\in N_F(x)$, possesses a solution (one can justify it easily analyzing carefully the proof of Theorem \ref{existence}.). \par It is worthwhile to notice that for any weakly compact subset $C\subset E$ there exists a constant $\gamma_C>0$ such that $|f_k(t,x_1)-f_k(t,x_2)|\<\gamma_C||F(t,X(t))||^+d(x_1,x_2)$ for all $x_1,x_2\in C$ and for a.a. $t\in I$. If $u_1,u_2$ are two solutions of equation \eqref{equhomo}, then 
\begin{align*}
|u_1(t)-u_2(t)|&\<\int_\tau^t||S(t-s)||_{{\mathcal L}}|f_k(s,u_1(s))-f_k(s,u_2(s))|\,ds\\&\<\int_\tau^t||S(t-s)||_{{\mathcal L}}\gamma_{u_1(I)\cup u_2(I)} g(s)\,d(u_1(s),u_2(s))\,ds\\&\<Me^{\omega T}\gamma_{u_1(I)\cup u_2(I)}\int_\tau^tg(s)|u_1(s)-u_2(s)|\,ds,
\end{align*}
where $g\in L^1(I)$ is such that $g(t)\geqslant\sup\limits_{|x|\<||X||^+}||F(t,x)||^+$ a.e. on $I$. Thus, equation \eqref{equhomo} has a unique solution $u[\tau,x(\tau)]$.\par Let $h\colon[0,1]\times\fix(H_k)\to\fix(H_k)$ be a homotopy given by the formula \[h(\lambdaup,x)(t):=
\begin{cases}
x(t),&t\in[0,\lambdaup T]\\
u[\lambdaup T;x(\lambdaup T)](t),&t\in[\lambdaup T,T].
\end{cases}\]
No need to emphasize that $h$ is well-defined. Observe that $h(0,x)=u_0$ for all $x\in\fix(H_k)$, where \[u_0(t)=S(t)x_0+\int_0^tS(t-s)f_k(s,u_0(s))\,ds,\;\;\;t\in I.\] At the same time $h(1,\cdot)=id_{\fix(H_k)}$. Assume that $\z{x_n}\subset\fix(H_k)$ and $\z{\lambdaup_n}\subset[0,1]$ are such that $x_n\xrightarrow[n\to\infty]{\fix(H_k)}x$ and $\lambdaup_n\xrightarrow[n\to\infty]{}\lambdaup$. For definiteness, let $\lambdaup_n\nearrow\lambdaup$. There are two cases to consider: $t<\lambdaup T$ and $t\geqslant\lambdaup T$. If $t<\lambdaup T$, then we are simply dealing with the convergence $h(\lambdaup_n,x_n)(t)\xrightharpoonup[n\to\infty]{E}x(t)=h(\lambdaup,x)(t)$. Suppose then, that $t\geqslant\lambdaup T$, $x_n=S(\cdot)x_0+V(f_n)$, $x=S(\cdot)x_0+V(f)$, $u_n:=h(\lambdaup_n,x_n)$, $u:=h(\lambdaup,x)$ and $\{x^*_m\}_{m=1}^\infty$ is a $w^*$-dense subset of the dual $E^*$. From Theorem \ref{ulger}. it follows easily that $f_n\xrightharpoonup[n\to\infty]{L^1(I,E)}g$, up to a subsequence. Thus, $x_n\xrightharpoonup[n\to\infty]{C(I,E)}S(\cdot)x_0+V(g)$ and consequently $V(f)=V(g)$. Eventually $f_n\xrightharpoonup[n\to\infty]{L^1(I,E)}f$, by Lemma \ref{mono}. Observe that \[\int_0^{\lambdaup T}S(t-s)f_n(s)\,ds\xrightharpoonup[n\to\infty]{E}\int_0^{\lambdaup T}S(t-s)f(s)\,ds\] and \[\int_{\lambdaup_n T}^{\lambdaup T}S(t-s)f_n(s)\,ds\xrightarrow[n\to\infty]{E}0.\] Therefore 
\begin{equation}\label{weakcon}
\int_0^{\lambdaup_n T}S(t-s)f_n(s)\,ds\xrightharpoonup[n\to\infty]{E}\int_0^{\lambdaup T}S(t-s)f(s)\,ds.
\end{equation}
\par Notice that $K:=\overline{\bigcup_{n=1}^\infty u_n(I)}^w\cup u(I)$ is weakly compact. Estimates on the segment $[\lambdaup T,T]$ have the following form: 
\begin{align*}
\beta\f(\x{u_n(t)}\g)&=\beta\f(\f\{S(t)x_0+\int_0^{\lambdaup_n T}S(t-s)f_n(s)\,ds+\int_{\lambdaup_n T}^t S(t-s)f_k(s,u_n(s))\,ds\g\}_{n=1}^\infty\g)\\&\<Me^{\omega T}\int_0^{\lambdaup_n T}\beta\f(\x{f_n(s)}\g)\,ds+Me^{\omega T}\int_{\lambdaup_n T}^t\beta\f(f_k\f(s,\x{u_n(s)}\g)\g)\,ds\\&\<Me^{\omega T}\int_0^t\eta(s)\beta(X(s))\,ds=0
\end{align*}
and 
\begin{align*}
\sup_\n|u_n(t)-u_n(\tau)|&\<|S(t)x_0-S(\tau)x_0|+\sup_\n\int_0^{\lambdaup_n T}||S(t-s)-S(\tau-s)||_{\mathcal L}|f_n(s)|\,ds\\&+\sup_\n\int_{\lambdaup_n T}^\tau||S(t-s)-S(\tau-s)||_{\mathcal L}|f_k(s,u_n(s))|\,ds\\&+\sup_\n\int_\tau^t||S(t-s)||_{\mathcal L}|f_k(s,u_n(s))|\,ds\\&\<|S(t)x_0-S(\tau)x_0|+\int_0^\tau||S(t-s)-S(\tau-s)||_{\mathcal L}g(s)\,ds\\&+Me^{\omega T}\int_\tau^tg(s)\,ds,
\end{align*}
where $g\in L^1(I)$ satisfies $g(t)\geqslant\sup\limits_{|x|\<||X||^+}||F(t,x)||^+$ a.e. on $I$. Therefore, $\bigcup\limits_{n=1}^\infty u_n(I)$ is strongly equicontinuous and $\beta\f(\bigcup\limits_{n=1}^\infty u_n(I)\g)=\sup\limits_{t\in I}\beta\f(\x{u_n(t)}\g)=0$. Whence weak compactness of $K$ follows. In that connection, for every $\n$ and for a.a. $t\in I$ one has
\begin{equation}\label{estimate}
|f_k(t,u_n(t))-f_k(t,u(t))|\<\gamma_K||F(t,X(t))||^+d(u_n(t),u(t)).
\end{equation}
\par For every $\eps>0$ there exists $m_0\in\mathbb{N}$ such that \[\sum_{m=m_0}^\infty2^{-m}|\langle x^*_m,u_n(t)-u(t)\rangle|\<\frac{\eps}{3}.\] 
By virtue of \eqref{weakcon} and \eqref{estimate} we may choose an $N\in\mathbb{N}$ such that for $m\in\{1,\ldots,m_0-1\}$ and $n\geqslant N$ we obtain
\begin{align*}
|\langle x^*_m,u_n(t)-u(t)\rangle|&\<\f|\f\langle x^*_m,\int_0^{\lambdaup_n T}S(t-s)f_n(s)\,ds-\int_0^{\lambdaup T}S(t-s)f(s)\,ds\g\rangle\g|\\&+\f|\f\langle x^*_m,\int_{\lambdaup_n T}^tS(t-s)f_k(s,u_n(s))\,ds-\int_{\lambdaup T}^tS(t-s)f_k(s,u(s))\,ds\g\rangle\g|\\&\<\frac{\eps}{3}+|x^*_m|\int_{\lambdaup_n T}^{\lambdaup T}||S(t-s)||_{{\mathcal L}}|f_k(s,u_n(s))|\,ds\\&+|x^*_m|\int_{\lambdaup T}^t||S(t-s)||_{{\mathcal L}}|f_k(s,u_n(s))-f_k(s,u(s))|\,ds\\&\<\frac{\eps}{3}+Me^{\omega T}\f(\int_{\lambdaup_n T}^{\lambdaup T}g(s)\,ds+\gamma_K\int_{\lambdaup T}^tg(s)\,d(u_n(s),u(s))\,ds\g)\\&\<\frac{2}{3}\eps+Me^{\omega T}\gamma_K\int_{\lambdaup T}^tg(s)\,d(u_n(s),u(s))\,ds,
\end{align*}
where $g\in L^1(I)$ satisfies $g(t)\geqslant\sup\limits_{|x|\<||X||^+}||F(t,x)||^+$ a.e. on $I$. Hence
\begin{align*}
d(u_n(t),u(t))&=\sum_{m=1}^\infty2^{-m}|\langle x^*_m,u_n(t)-u(t)\rangle|\<\frac{\eps}{3}+\sum_{m=1}^{m_0-1}2^{-m}|\langle x^*_m,u_n(t)-u(t)\rangle|\\&\<\frac{\eps}{3}+\sum_{m=1}^{m_0-1}2^{-m}\f(\frac{2}{3}\eps+Me^{\omega T}\gamma_K\int_{\lambdaup T}^tg(s)\,d(u_n(s),u(s))\,ds\g)\\&\<\eps+Me^{\omega T}\gamma_K\int_{\lambdaup T}^tg(s)\,d(u_n(s),u(s))\,ds
\end{align*}
for $n\geqslant N$. Eventually, \[d(u_n(t),u(t))\<\eps\exp(Me^{\omega T}\gamma_K||g||_1),\] i.e. $d(u_n(t),u(t))\xrightarrow[n\to\infty]{}0$ for $t\geqslant\lambdaup T$. In fact, we have shown that $h(\lambdaup_n,x_n)(t)\xrightharpoonup[n\to\infty]{E}h(\lambdaup,x)(t)$ for $t\in I$. Since \[\sup_{\stackrel{\scriptstyle \n}{\scriptstyle t\in I}}|h(\lambdaup_n,x_n)(t)|\<Me^{\omega T}(|x_0|+||g||_1)\] with $g\in L^1(I)$ such that $g(t)\geqslant\sup\limits_{|x|\<||X||^+}||F(t,x)||^+$ for a.a. $t\in I$, the latter entails weak convergence $h(\lambdaup_n,x_n)\xrightharpoonup[n\to\infty]{C(I,E)}h(\lambdaup,x)$. This means that $h$ is a continuous mapping with respect to the relative weak topology of $\fix(H_k)$.\par Summing up, the solution set $S_{\!F}(x_0)$ is representable in the form of the intersection of a decreasing sequence of compact contractible metric spaces $(\fix(H_n),d)$.
\end{proof}

\begin{corollary}\label{solsetcor}
Suppose that \eqref{mu} holds. Under assumptions of Theorem \ref{solset}, with the exclusion of condition $(\F_4)$, the solution set of the Cauchy problem \eqref{P} is $R_\delta$.
\end{corollary}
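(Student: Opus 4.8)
The plan is to rerun the proof of Theorem \ref{solset} practically word for word, observing that $(\F_4)$ enters that argument in exactly two ways, both of which are furnished equally well by the sublinear growth hypothesis \eqref{mu}. On the one hand, $(\F_4)$ (together with Theorem \ref{existence}) secures that $S_{\!F}(x_0)\neq\varnothing$ and that it admits an a priori bound $||S_{\!F}(x_0)||^+\<R$; this is what lets the iteration $X_0=D_C(0,R)$, $X_n=\overline{Y_n}$ get off the ground and deliver the weakly compact convex set $X=\bigcap_{n=0}^\infty X_n$ satisfying \eqref{X2}. On the other hand, at every place where the proof needs a majorant $g\in L^1(I)$ of $t\mapsto\sup_{|x|\<\rho}||F(t,x)||^+$ for the pertinent radius $\rho=||X||^+$, it cites Remark 2; under \eqref{mu} one may plainly take $g(t):=(1+\rho)\mu(t)\in L^1(I)$.

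First I would secure the two items above. Nonemptiness of $S_{\!F}(x_0)$ is exactly the content of the Corollary to Theorem \ref{existence}: its hypotheses (separability of $E$, $(\A_1)$, which follows from $(\A_2)$, together with $(\F_1)$-$(\F_3)$, $(\F_5)$ and \eqref{mu}) all hold here. For the a priori bound, any $u\in S_{\!F}(x_0)$ has the form $u=S(\cdot)x_0+V(f)$ with $f\in N_F(u)$, so Lemma \ref{mono} and \eqref{mu} give
\[|u(t)|\<Me^{\omega T}|x_0|+Me^{\omega T}\int_0^t\mu(s)\f(1+|u(s)|\g)\,ds,\qquad t\in I,\]
whence Gronwall's inequality produces a constant $R>0$ with $||S_{\!F}(x_0)||^+\<R$. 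Then $X\subset X_0=D_C(0,R)$ is bounded by $R$, while $X\supset S_{\!F}(x_0)\neq\varnothing$ is nonempty; every remaining assertion about the $X_n$ and about $X$ (nonemptiness, convexity, equicontinuity via \eqref{equicontinuity}, the measure-of-noncompactness estimates resting on $(\A_2)$, $(\F_5)$ and \cite[Th.2.8.]{kunze}, and hence the weak compactness of $X$ and of $X(I)$) never invoked $(\F_4)$ and so persists verbatim. Moreover, because $X$ is bounded by $R$, the auxiliary maps of the proof ($\w{F}(t,x)=\overline{\co}\,F(t,P(t,x))$, the regularizations $F_n$, and the single-valued $f_n(t,x)=\sum_y\psi^n_y(x)g_y(t)$) all take values inside $\overline{\co}\,F(t,X(t))$, so that $||\w{F}(t,x)||^+$, $||F_n(t,x)||^+$ and $|f_n(t,x)|$ are each $\<(1+R)\mu(t)$ a.e. on $I$; being integrably bounded, every one of them trivially satisfies $(\F_4)$, with the corresponding $\limsup$ equal to $0<M^{-1}e^{-\omega T}$.

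Consequently every internal step of the proof of Theorem \ref{solset} remains in force after these substitutions: the verification that $\w{F}$ and $f_k$ satisfy $(\F_1)$-$(\F_5)$, the application of Theorem \ref{existence} to $\w{F}$ yielding $S_{\!\w{F}}(x_0)=S_{\!F}(x_0)\subset\fix(H_k)$, the solvability and uniqueness of equation \eqref{equhomo}, the uniform-integrability passages of the type \eqref{unifint}, the Lipschitz-in-$d$ bounds $h(F_k(t,x_1),F_k(t,x_2))\<\gamma\,||F(t,X(t))||^+d(x_1,x_2)$, and the homotopy-continuity estimate \eqref{estimate}: in each of these one merely replaces the majorant coming from Remark 2 by $g(t)=(1+R)\mu(t)$. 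The representation $S_{\!F}(x_0)=\bigcap_{n=1}^\infty\fix(H_n)$ as a decreasing intersection of compact, metrizable, contractible spaces $(\fix(H_n),d)$ then gives the $R_\delta$ conclusion exactly as before. The one genuinely new point, which I expect to be the only place requiring real care, is that \eqref{mu} is not a smallness condition like $(\F_4)$, so the a priori estimate cannot be obtained by the linear-growth comparison of Theorem \ref{existence} and must instead come from the Gronwall argument above, with nonemptiness of $S_{\!F}(x_0)$ delivered by the Corollary to Theorem \ref{existence} (via the Leray--Schauder Theorem \ref{weak}) rather than by Theorem \ref{existence} itself.
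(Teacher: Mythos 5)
Your proposal is correct and matches the argument the paper intends (the paper in fact omits the proof of Corollary \ref{solsetcor} entirely): condition $(\F_4)$ enters the proof of Theorem \ref{solset} only through the nonemptiness and a priori boundedness of $S_{\!F}(x_0)$ and through the existence of integrable majorants of $t\mapsto\sup_{|x|\<\rho}||F(t,x)||^+$, and you correctly supply the former via the unnumbered Corollary to Theorem \ref{weak} together with a Gronwall estimate and the latter via $g(t)=(1+\rho)\mu(t)$. Your observation that the auxiliary maps $\w{F}$, $F_n$, $f_n$ are integrably bounded and hence satisfy $(\F_4)$ trivially is exactly what keeps the remainder of the proof of Theorem \ref{solset} intact.
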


\section{Applications}

Formulated in the previous section Theorem \ref{solset}., on the geometric structure of the solution set $S_{\!F}(x_0)$, will allow us employ an approach imitating method of the operator of translation along the trajectories to demonstrate the existence of integrated solutions to the nonlocal Cauchy problem. Consider, therefore, the following boundary value problem:
\begin{equation}\label{N}
\begin{dcases*}
\dot{x}(t)\in Ax(t)+F\f(t,\int_0^tx(s)\,ds\g)&on $I$,\\
x(0)\in G(x),
\end{dcases*}
\end{equation}
where $G\colon C(I,E)\map E$. By applying mentioned approach, we were able to prove
\begin{theorem}\label{nonlocal}
Let $E$ be a separable Banach space. Assume that $A\colon D(A)\to E$ is a generator of a non-degenerate equicontinuous integrated semigroup $\{S(t)\}_{t\geqslant 0}$ such that $||S(t)||_{\mathcal L}\<e^{\omega t}$ for $t\geqslant 0$. Assume further that $F\colon I\times E\map E$ satisfies $(\F_1)$-$(\F_5)$. Let $G\colon C(I,E)\map E$ be a set-valued operator whose restriction $\res{G}{M}\colon(M,w)\map(E,w)$ to any weakly compact subset $M\subset C(I,E)$ is an admissible map. If $G$ satisfies 
\begin{itemize}
\item[$(\text{G}_1)$] $\beta(G(\Omega))\<\beta(\Omega(T))\;$ for bounded $\Omega\subset C(I,E)$,
\item[$(\text{G}_2)$] $\exists\,a,d>0\,\exists\,R>0\,\forall\,|x|\geqslant R\;\;\;||G(x)||^+\<a||x||^\alpha+d$ for some $\alpha\in(0,1)$
\end{itemize}
and 
\begin{equation}\label{omega}
\omega T+||\eta||_1<1,
\end{equation}
then the nonlocal Cauchy problem \eqref{N} has an integrated solution.
\end{theorem}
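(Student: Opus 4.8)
The plan is to imitate the operator of translation along trajectories. An integrated solution of \eqref{N} is precisely a function $x$ for which there is some $x_0\in G(x)$ with $x\in S_{\!F}(x_0)$, so we seek a fixed point of the set-valued composition $\Phi:=G\circ S_{\!F}\colon E\map E$, $\Phi(x_0)=G\f(S_{\!F}(x_0)\g)$. The decisive structural input is Theorem \ref{solset}.: for every $x_0\in E$ the set $S_{\!F}(x_0)$ is a compact $R_\delta$-subset of $(C(I,E),w)$, hence acyclic. Once we know that the parametrized solution operator $S_{\!F}$ is admissible and that the restriction of $G$ to weakly compact sets is admissible, $\Phi$ is admissible as a composition of admissible maps (\cite[Th.40.6]{gorn}), and a fixed point of $\Phi$, produced by Theorem \ref{Lefschetz}. on a suitable invariant set, yields the solution. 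It is essential here to work in $E$ rather than in $C(I,E)$: $\Phi$ need not be acyclic-valued, but as a composition of admissible maps it is admissible, and that is exactly the hypothesis of Theorem \ref{Lefschetz}.

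\emph{Step 1: $S_{\!F}$ is admissible.} Reading the arguments of Theorem \ref{existence}. and Lemma \ref{lemat1}. with the initial datum treated as a parameter, one checks that for every bounded $B\subset E$ the codomain restriction $S_{\!F}\colon(B,w)\map(C(I,E),w)$ has weakly compact $R_\delta$ values and is upper semicontinuous. The a~priori estimate, the strong equicontinuity and the uniform integrability of the selections of $N_F$ are exactly as in \eqref{equicontinuity}--\eqref{unifint}; weak compactness of $S_{\!F}(B)$ follows from \cite[Th.2.8.]{kunze} together with Theorem \ref{ulger}.; and the graph of $S_{\!F}$ is weakly sequentially closed by Corollary \ref{wuhc}., for if $x_0^n\rightharpoonup x_0$ in $E$ and $u_n=S(\cdot)x_0^n+V(f_n)\in S_{\!F}(x_0^n)$, then, along a subsequence, $f_n\rightharpoonup f$ in $L^1(I,E)$, $u_n\rightharpoonup S(\cdot)x_0+V(f)$ in $C(I,E)$ and $f\in N_F\f(S(\cdot)x_0+V(f)\g)$. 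On a weakly compact, hence metrizable, set the fibres of $S_{\!F}$ are therefore acyclic compact metric spaces, so $S_{\!F}$ is admissible there, and composing with $G$ (admissible on weakly compact subsets of $C(I,E)$) makes $\Phi$ admissible on weakly compact subsets of $E$.

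\emph{Step 2: a~priori bound.} For $u=S(\cdot)x_0+V(f)\in S_{\!F}(x_0)$ one has $||u||\<Me^{\omega T}|x_0|+||V||_{\mathcal L}||f||_1$ with $M=1$ and $||f||_1\<\overline{\int\limits_I}\sup\limits_{|x|\<||u||}||F(t,x)||^+\,dt$; by $(\F_4)$ this is dominated by $N+c||u||$ for a suitable constant $N$ and some $c<e^{-\omega T}$, so $||u||$ is bounded affinely in $|x_0|$. Then $(\text{G}_2)$ with $\alpha\in(0,1)$ makes $||\Phi(x_0)||^+=||G(S_{\!F}(x_0))||^+$ grow strictly sublinearly in $|x_0|$, whence there is $\rho>0$ with $\Phi(D(0,\rho))\subset D(0,\rho)$, where $D(0,\rho):=\{z\in E\colon|z|\<\rho\}$. \emph{Step 3: compactification.} Inside $D(0,\rho)$ fix $\hat{z}$ and put \[{\mathcal A}:=\f\{M\in 2^{D(0,\rho)}\setminus\{\varnothing\}\colon M\text{ is closed convex and }\overline{\co}\f(\{\hat{z}\}\cup\Phi(M)\g)\subset M\g\},\quad M_0:=\bigcap\limits_{M\in{\mathcal A}}M,\] so that, exactly as in Theorem \ref{existence}., $M_0=\overline{\co}\f(\{\hat{z}\}\cup\Phi(M_0)\g)$. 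It remains to prove $\beta(M_0)=0$. Writing $m(t):=\beta\f(S_{\!F}(M_0)(t)\g)$, the estimate \cite[Th.2.8.]{kunze} together with $(\F_5)$ gives \[m(t)\<||S(t)||_{\mathcal L}\,\beta(M_0)+\int_0^t||S(t-s)||_{\mathcal L}\,\eta(s)m(s)\,ds\] for every $t\in I$, while $(\text{G}_1)$ gives $\beta(M_0)=\beta(\Phi(M_0))\<m(T)$; feeding the first inequality into a Gronwall estimate and closing the loop through the second, the resulting scalar inequality forces $m\equiv 0$ under the smallness assumption \eqref{omega}, $\omega T+||\eta||_1<1$. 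Hence $M_0$ is weakly compact. \emph{Step 4: fixed point.} Since $E$ is separable, $(M_0,w)$ is a compact metrizable convex set (metrized by a metric of the form \eqref{d}), hence an absolute extensor for compact metrizable spaces by Dugundji's theorem, and $\Phi(M_0)\subset M_0$ lies in a compact metrizable set; Theorem \ref{Lefschetz}. applied to the admissible map $\Phi\colon(M_0,w)\map(M_0,w)$ yields $x_0^\ast$ with $x_0^\ast\in\Phi(x_0^\ast)=G(S_{\!F}(x_0^\ast))$, i.e. $x_0^\ast\in G(x^\ast)$ for some $x^\ast\in S_{\!F}(x_0^\ast)$; such an $x^\ast$ is an integrated solution of \eqref{N}.

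The principal obstacle is Step 3. Unlike in Theorem \ref{existence}., the nonlocal datum couples the operator to the \emph{terminal} value of trajectories through $(\text{G}_1)$, and $t=T$ is precisely the instant at which the weighted seminorm $\sup\limits_{t\in I}e^{-L_0t}\beta(\cdot)$ used there is least effective, so one cannot make the relevant Lipschitz-type constant small merely by enlarging $L_0$; it is condition \eqref{omega} that keeps the measure-of-noncompactness argument from collapsing. A secondary, more technical, matter is the verification in Step 1 that $x_0\mapsto S_{\!F}(x_0)$ is genuinely admissible, i.e. that the $R_\delta$-structure supplied by Theorem \ref{solset}. for a fixed datum is uniform enough in $x_0$ that $S_{\!F}$ is realized by a pair $(p,q)$ with $p$ a perfect surjection having acyclic fibres.
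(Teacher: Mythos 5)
Your proposal follows essentially the same route as the paper's proof: the Poincar\'e-type operator $P=G\circ S_{\!F}$ acting on $E$, an invariant ball obtained from $(\text{G}_2)$ and $(\F_4)$, admissibility of $P$ on weakly compact sets via Theorem \ref{solset}. combined with the composition theorem for admissible maps, weak compactness of the M\"onch set $M_0$ through the Kunze--Schl\"uchtermann estimate together with $(\text{G}_1)$ and \eqref{omega}, and finally Theorem \ref{Lefschetz}. The scalar loop you describe in Step 3 is precisely the paper's inequality $\beta(M_0)\<\beta(M_0)\,e^{\omega T+||\eta||_1}$ obtained by solving the differential inequality $\rho'\<(\omega+\eta)\rho$, and your closing remark correctly identifies why the weighted seminorm $\sup_{t\in I}e^{-L_0t}\beta(\cdot)$ from Theorem \ref{existence}. cannot be used here.
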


\begin{proof}
Define $P\colon E\map E$ by $P=G\circ S_F$, where $S_F\colon E\map C(I,E)$ is the solution set map given by \[S_{\!F}(x_0):=\f\{x\in C(I,E)\colon x\text{ is an integrated solution of }\eqref{P}\g\}.\] \par We will show that there exists $R>0$ such that $P(D(0,R))\subset D(0,R)$. Suppose not. Then there exist elements $x_n\in E$ and $y_n\in P(x_n)$ such that $|x_n|\<n$ and $|y_n|>n$ for $\n$. If $y_n\in G(u_n)$, then either $\z{u_n}$ is bounded or $||u_n||\xrightarrow[n\to\infty]{}+\infty$. In the first case there must be a radius $\hat{R}>0$ such that $G(\x{u_n})\subset D(0,\hat{R})$. Thus, \[1\<\limsup_{n\to\infty}\frac{|y_n|}{n}\<\limsup_{n\to\infty}\frac{||G(u_n)||^+}{n}\<\lim_{n\to\infty}\frac{\hat{R}}{n}=0\] - a contradiction. Assume that $||u_n||\xrightarrow[n\to\infty]{}+\infty$ and $\lim\limits_{n\to\infty}\frac{|x_n|}{||u_n||}=0$. Then \[1=\limsup_{n\to\infty}\frac{||u_n||}{||u_n||}\<e^{\omega T}\f(\limsup_{n\to\infty}\frac{|x_n|}{||u_n||}+\limsup_{n\to\infty}\frac{1}{||u_n||}\overline{\int\limits_I}\sup_{|x|\<||u_n||}||F(t,x)||^+\,dt\g)<1\] - a contradiction. Suppose than that $||u_n||\xrightarrow[n\to\infty]{}+\infty$ and that there exist $\eps>0$ and $\z{k_n}$ such that $|x_{k_n}|>\eps||u_{k_n}||$ for $\n$. Using hypothesis $(\text{G}_2)$ we get 
\begin{align*}
1\<\limsup_{n\to\infty}\frac{|y_{k_n}|}{k_n}&<\limsup_{n\to\infty}\frac{||G(u_{k_n})||^+}{k_n}\<\limsup_{n\to\infty}\frac{a||u_{k_n}||^\alpha+d}{k_n}\\&\<\limsup_{n\to\infty}\frac{a\eps^{-\alpha}(k_n)^\alpha+d}{k_n}=0.
\end{align*}
In other words, there must be a ball $D(0,R)$ invariant under the Poincar\'e-type operator $P$.\par Consider a sequence $x_n\xrightharpoonup[n\to\infty]{E}x$. Let $u_n\in S_{\!F}(x_n)$ be such that $u_n=S(\cdot)x_n+V(f_n)$. Clearly, the sequence of solutions $\z{u_n}$ possesses a priori bounds. Hence \[|f_n(t)|\<||F(t,u_n(t))||^+\<b(t)(1+|u_n(t)|)\<b(t)(1+\sup_\n||u_n||)\] for each $\n$ and for a.a. $t\in I$. At the same time
\begin{align*}
\beta(\x{u_n(t)})&=\beta\f(\x{S(t)x_n+V(f_n)(t)}\g)\\&\<||S(t)||_{\mathcal L}\beta(\x{x_n})+\int_0^t||S(t-s)||_{\mathcal L}\beta(\x{f_n(s)})\,ds\\&\<\int_0^t\eta(s)\beta(\x{u_n(s)})\,ds. 
\end{align*}
Hence, $\sup\limits_{t\in I}\beta(\x{u_n(t)})=0$ and eventually $\beta(\x{f_n(t)})=0$ a.e on $I$. By virtue of Theorem \ref{ulger}. one may assume, passing to a subsequence if necessary, that $f_n\xrightharpoonup[n\to\infty]{L^1(I,E)}f$. Obviously, $S(t)x_n\xrightharpoonup[n\to\infty]{E}S(t)x$. Consequently, $u_n(t)=S(t)x_n+V(f_n)(t)\xrightharpoonup[n\to\infty]{E}S(t)x+V(f)(t)=:u(t)$ for each $t\in I$. Since $\sup\limits_{\n}||u_n||<\infty$, the latter means that $u_n\xrightharpoonup[n\to\infty]{C(I,E)}u$. Since hypotheses of the Convergence Theorem are met (cf. Corollary \ref{wuhc}.), we infer that $f\in N_F(u)$. On that account $u\in S_{\!F}(x)$. So the operator $S_{\!F}\colon(M,w)\map(C(I,E),w)$ is a weakly compact valued upper semicontinuous map for each fixed relatively weakly compact subset $M\subset E$. Now we can apply the structure theorem (Theorem \ref{solset}.) to get admissibility of the Poinar\'e-type operator $P\colon(M,w)\map(E,w)$ (remember that the composition of two admissible maps is still admissible).\par Let us reiterate the reasoning contained in the proof of Theorem \ref{existence}. Fix $\hat{x}\in D(0,R)$ and define \[{\mathcal A}:=\f\{M\in 2^{D(0,R)}\setminus\{\varnothing\}\colon M\text{ is closed convex and } \overline{\co}\f(\{\hat{x}\}\cup P(M)\g)\subset M\g\}.\] Then the intersection $M_0:=\bigcap\limits_{M\in{\mathcal A}}M$ is nonempty and possesses the following form $M_0=\overline{\co}\f(\{\hat{x}\}\cup P(M_0)\g)$. We will show that $M_0$ is weakly compact in $E$. Let $u_n=S(\cdot)x_n+V(f_n)$ with $f_n\in N_F(u_n)$ and $x_n\in M_0$. Put \[\Delta(\Omega):=\f\{D\in 2^\Omega\setminus\{\varnothing\}\colon D\text{ is countable}\g\}.\] In view of \cite[Th.2.8.]{kunze} we have 
\begin{align*}
\beta\f(\x{u_n(t)}\g)&=\beta\f(\x{S(t)x_n+V(f_n)(t)}\g)\<\beta\f(S(t)\x{x_n}\g)+\beta\f(\f\{\int_0^tS(t-s)f_n(s)\,ds\g\}\g)\\&\<||S(t)||_{\mathcal L}\beta\f(\x{x_n}\g)+\int_0^t||S(t-s)||_{\mathcal L}\beta\f(\x{f_n(s)}\g)\,ds\\&\<e^{\omega t}\max_{D\in\Delta(M_0)}\beta(D)+\int_0^te^{\omega(t-s)}\eta(s)\beta\f(\x{u_n(s)}\g)\,ds.
\end{align*}
Defining the right-hand side of the above inequality by $\rho$, we see that
\[\rho'(t)=\omega\rho(t)+\eta(t)\beta\f(\x{u_n(t)}\g)\<(\omega+\eta(t))\rho(t)\] a.e. on $I$. Solving of this differential inequality leads to
\[\beta\f(\x{u_n(t)}\g)\<\rho(t)\<\max_{D\in\Delta(M_0)}\beta(D)\exp\f(\omega t+\int_0^t\eta(s)\,ds\g)\] for $t\in I$. Using the latter and $(\text{G}_1)$ we obtain
\begin{multline*}
\max_{D\in\Delta(M_0)}\beta(D)=\max_{D\in\Delta(P(M_0))}\beta(D)\<\max_{D\in\Delta(S_{\!F}(M_0))}\beta(G(D))\\\<\max_{D\in\Delta(S_{\!F}(M_0))}\beta(D(T))\<\max_{D\in\Delta(M_0)}\beta(D)\,e^{\omega T+||\eta||_1}.
\end{multline*}
In view of \eqref{omega}, $\max\limits_{D\in\Delta(M_0)}\beta(D)=0$. In accordance with the Eberlein-\v{S}mulyan theorem the set $M_0$ must be weakly compact.\par Summing up, the admissible operator $P\colon M_0\map M_0$ from the convex subset $M_0$ of the locally convex space $(E,w)$ to the compact metrizable subset of $M_0$ has at least one fixed point, by virtue of Theorem \ref{Lefschetz}. This fixed point constitutes a solution to the boundary value problem \eqref{N}.
\end{proof}

\begin{corollary}\label{nonlocal2}
Assume that an operator $G\colon C(I,E)\map E$ has a weakly sequentially closed graph, acyclic values, maps bounded sets into relatively weakly compact sets and satisfies the sublinear growth condition $(\text{G}_2)$. Then, taking into account the remaining hypotheses of Theorem \ref{nonlocal}. $($except of course the condition $(\text{G}_1))$, the nonlocal Cauchy problem \eqref{N} has a solution.
\end{corollary}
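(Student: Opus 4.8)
The plan is to obtain the assertion as a direct consequence of Theorem~\ref{nonlocal}. For this I would show that the three properties assumed here on $G$ --- weakly sequentially closed graph, acyclic values, and mapping bounded sets into relatively weakly compact sets --- imply the two hypotheses of Theorem~\ref{nonlocal} concerning $G$ that are not literally present in the present statement, namely the admissibility of the restriction $\res{G}{M}\colon(M,w)\map(E,w)$ for every weakly compact $M\subset C(I,E)$, and the regularity condition $(\text{G}_1)$. All the remaining data of Theorem~\ref{nonlocal} --- $E$ separable, $A$ generating a non-degenerate equicontinuous integrated semigroup with $||S(t)||_{\mathcal L}\<e^{\omega t}$, $F$ satisfying $(\F_1)$-$(\F_5)$, the growth bound $(\text{G}_2)$, and the inequality $\omega T+||\eta||_1<1$ --- are retained verbatim.

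The condition $(\text{G}_1)$ is then immediate: if $\Omega\subset C(I,E)$ is bounded, its image $G(\Omega)$ is relatively weakly compact, hence $\beta(G(\Omega))=0\<\beta(\Omega(T))$ by regularity of the De Blasi measure. This is in fact the only place $(\text{G}_1)$ enters the proof of Theorem~\ref{nonlocal} (the weak compactness of $M_0$), and under the present hypothesis that step becomes even more direct, since $P(M_0)=G(S_{\!F}(M_0))$ is relatively weakly compact outright.

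The substantive point is the admissibility of $\res{G}{M}$. Fix a weakly compact $M\subset C(I,E)$. Since $E$ is separable, so is $C(I,E)$, whence the weak topology is metrizable on weakly compact subsets of both $E$ and $C(I,E)$; thus $(M,w)$ is a compact metrizable space and $\overline{G(M)}^w$ --- weakly compact because $M$ is bounded --- is compact metrizable in the weak topology of $E$. On these metrizable spaces sequential closedness of $\Graph(G)$ coincides with closedness, so $\res{G}{M}\colon(M,w)\map(\overline{G(M)}^w,w)$ has closed graph with compact target and is therefore upper semicontinuous; its values $G(x)$ are norm-compact and acyclic, hence weakly compact, and since the norm and weak topologies agree on the compact set $G(x)$, the space $(G(x),w)$ is acyclic as well. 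Consequently $\res{G}{M}$ is an acyclic, and in particular admissible, map between the relevant topological spaces. Having verified the two missing hypotheses, I would simply invoke Theorem~\ref{nonlocal} to conclude that \eqref{N} possesses an integrated solution. I expect the only place demanding real care to be the metrization step and the attendant upgrade from ``weakly sequentially closed graph'' to ``weakly upper semicontinuous restriction with weakly compact, weakly acyclic values''; everything else is bookkeeping.
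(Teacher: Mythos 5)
Your reduction to Theorem \ref{nonlocal} is the intended argument, and most of it is sound: since a weakly compact $M\subset C(I,E)$ is bounded, $G(M)$ is relatively weakly compact, so $(\text{G}_1)$ holds trivially because $\beta(G(\Omega))=0\<\beta(\Omega(T))$ by regularity of the De Blasi measure; and the separability of $E$ (hence of $C(I,E)$) makes the weak topology metrizable on weakly compact sets, so the weakly sequentially closed graph of $\res{G}{M}\colon(M,w)\map(\overline{G(M)}^w,w)$ is genuinely closed, which together with the compactness of the target yields upper semicontinuity. This is exactly the bookkeeping the corollary expects.

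The one step that does not hold as written is the claim that ``its values $G(x)$ are norm-compact.'' Nothing in the hypotheses gives norm-compactness of $G(x)$; what you actually have is that $G(x)=G(\{x\})$ is relatively weakly compact (as the image of the bounded set $\{x\}$) and weakly sequentially closed (from the graph condition), hence weakly compact by the Eberlein--\v{S}mulian theorem. Accordingly, ``acyclic values'' must be read in the weak topology --- the only reading compatible with the requirement that $\res{G}{M}\colon(M,w)\map(E,w)$ be admissible --- and on the weakly compact metrizable space $(G(x),w)$ this is just acyclicity of a compact metric space. With that correction $\res{G}{M}$ is upper semicontinuous with compact acyclic values in the weak topologies, i.e.\ an acyclic and hence admissible map, and the appeal to Theorem \ref{nonlocal} finishes the proof. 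Had one insisted on acyclicity in the norm topology, your argument could not transfer it to the weak topology without the (unavailable) norm-compactness, since Alexander--Spanier cohomology is not preserved merely by passing to a coarser topology on the same set.
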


\begin{remark}
Corollary \ref{nonlocal2}. emphasizes the advantage of Theorem \ref{nonlocal}. over \cite[Th.2.2.]{vath}, at least in the context of a separable Banach space and non-degenerate integrated semigroups. It would not be possible to weaken the assumption regarding the topology of values of the boundary condition operator without using the structure theorem, i.e. Theorem \ref{solset}. 
\end{remark}

\par Now we turn our attention to the following multivalued wave equation:
\begin{equation}\label{wave}
\begin{dcases*}
\square\,u(t,x)=f_2(t,x)+\Delta\int_0^tf_1(s,x)\,ds\\
f_1(t,x)\in\f[h^1_1\f(t,x,\int_{\R{n}}k_1^1(t,y)\,dy\g),h^1_2\f(t,x,\int_{\R{n}}k_2^1(t,y)\,dy\g)\g]&\\
f_2(t,x)\in\f[h^2_1\f(t,x,\int_{\R{n}}k_1^2(t,y)u(t,y)\,dy\g),h^2_2\f(t,x,\int_{\R{n}}k_2^2(t,y)u(t,y)\,dy\g)\g]&\\
\end{dcases*}
\end{equation}
on $I\times\R{n}$, subject to the Cauchy condition
\begin{equation}\label{boundary}
\begin{dcases*}
\partial_tu(0,x)=\mathring{u}_2&on $\R{n}$\\
u(0,x)=\mathring{u}_1&on $\R{n}$,
\end{dcases*}
\end{equation}
where $\square$ is the d'Alembertian and $k_i^1(t,\cdot)\in L^1(\R{n})$, $k_i^2(t,\cdot)\in L^2(\R{n})$ for a.a. $t\in I$ and $i=1,2$.
\par Let $\langle\cdot,\cdot\rangle$ denote the inner product in $L^2(\R{n})$. By the weak solution of the problem \eqref{wave}-\eqref{boundary} we mean a function $w\in C(I,L^2(\R{n}))$ such that for every $v\in H^2(\R{n})$ the function $\langle w(\cdot),v\rangle\in W^{2,1}(I)$ and
\begin{equation*}
\begin{dcases*}
\frac{d^2}{dt^2}\,\langle w(t),v\rangle=\langle w(t),\Delta v\rangle+\langle f_2(t),v\rangle+\f\langle\int_0^tf_1(s)\,ds,\Delta v\g\rangle&a.e. on $I$\\
\res{\frac{d}{dt}\,\langle w(t),v\rangle}{t=0}=\langle\mathring{u}_2,v\rangle\\
w(0)=\mathring{u}_1
\end{dcases*}
\end{equation*}
for some functions $f_1,f_2\in L^1(I,L^2(\R{n}))$ such that
\begin{equation*}
\begin{dcases}
h_1^1\f(t,x,\int_{\R{n}}k_1^1(t,y)\,dy\g)\<f_1(t,x)\<h_2^1\f(t,x,\int_{\R{n}}k_2^1(t,y)\,dy\g)\\
h_1^2\f(t,x,\int_{\R{n}}\!k_1(t,y)(w(t,y)-\mathring{u}_1(y))\,dy\g)\<f_2(t,x)\<h_2^2\f(t,x,\int_{\R{n}}\!k_2(t,y)(w(t,y)-\mathring{u}_1(y))\,dy\g)
\end{dcases}
\end{equation*}
for a.a. $t\in I$ and a.a. $x\in\R{n}$. Let ${\mathcal S}(\mathring{u}_1,\mathring{u}_2)$ denote the set of all weak solutions of the problem \eqref{wave}-\eqref{boundary}. 
\par Our hypotheses on $h_j^i\colon I\times\R{n}\times\R{}\to\R{}$ are the following:
\begin{itemize}
\item[$(\text{h}_1)$] for any $u\in L^2(\R{n})$ there exist $v,w\in L^1(I,L^2(\R{n}))$ such that 
\begin{equation*}
\begin{dcases}
h^1_1\f(t,x,\int_{\R{n}}k_1^1(t,y)\,dy\g)\<v(t,x)\<h^1_2\f(t,x,\int_{\R{n}}k_2^1(t,y)\,dy\g)&\\
h^2_1\f(t,x,\int_{\R{n}}k_1^2(t,y)u(y)\,dy\g)\<w(t,x)\<h^2_2\f(t,x,\int_{\R{n}}k_2^2(t,y)u(y)\,dy\g)&\\
\end{dcases}
\end{equation*} 
for a.a. $t\in I$ and a.a. $x\in\R{n}$,
\item[$(\text{h}_2)$] for a.a. $t\in I$, for a.a. $x\in\R{n}$ and for every $z\in\R{}$ the functions $h^2_1(t,x,\cdot)$ is lower semicontinuous while $h^2_2(t,x,\cdot)$ is upper semicontinuous,
\item[$(\text{h}_3)$] for $j=1,2$ there exist $b_1,b_2\in L^1(I)$ and $c_1\colon I\times\R{n}\to\R{}$, $c_2\colon I\times\R{n}\times\R{}_+\to\R{}$ such that 
\begin{equation*}
\begin{dcases}
\sup_{|z|\<||k_j^1(t,\cdot)||_1}|h_j^1(t,x,z)|\<c_1(t,x)&\\
\sup_{|z|\<||k_j^2(t,\cdot)||_2r}|h_j^2(t,x,z)|\<c_2(t,x,r)&\\
\end{dcases}
\end{equation*} 
and 
\begin{equation*}
\begin{dcases}
\int_{\R{n}}c_1^2(t,x)\,dx\<b_1^2(t)&\\
\int_{\R{n}}c_2^2(t,x,r)\,dx\<b_2^2(t)(1+r)^2
\end{dcases}
\end{equation*} 
for every $r>0$, for a.a. $t\in I$ and for a.a. $x\in\R{n}$.
\end{itemize}

\begin{theorem}
If hypotheses $(\text{h}_1)$-$(\text{h}_3)$ hold, then for every $\mathring{u}_1,\mathring{u}_2\in L^2(\R{n})$ the solution set ${\mathcal S}(\mathring{u}_1,\mathring{u}_2)$ is acyclic in the space $C(I,L^2(\R{n}))$ endowed with the weak topology.
\end{theorem}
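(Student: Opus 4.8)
\emph{Strategy.} The plan is to realize \eqref{wave}--\eqref{boundary} as an instance of the abstract problem \eqref{P} in a separable Hilbert space and then to transport the structure theorem along the resulting correspondence. I would first carry out a first--order reduction. Since $\mathring{u}_1,\mathring{u}_2$ are assumed only to lie in $L^2(\R{n})$, the natural phase space for the pair $(w,\partial_tw)$ is $L^2(\R{n})\times H^{-1}(\R{n})$; on it the reduced wave operator $(u,v)\mapsto(v,\Delta u)$ (domain $H^1\times L^2$) is \emph{not} the generator of a $C_0$--semigroup, but it does generate a non-degenerate, exponentially bounded integrated semigroup $\{S(t)\}$ which, crucially for $(\A_2)$, is Lipschitz in the operator norm — the choice of $H^{-1}$ is forced, since on $L^2\times L^2$ the entry $\cos(t\sqrt{-\Delta})-\mathrm{Id}$ of $S(t)$, and on $L^2\times H^{-2}$ the entry $(\mathrm{Id}-\cos(t\sqrt{-\Delta}))(-\Delta)^{-1}$, fail to be norm--continuous. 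The term $\Delta\int_0^tf_1$ cannot be carried by the Nemytski\v\i{} operator, because under $(\text{h}_3)$ the map $t\mapsto\int_0^tf_1$ is only $L^2$--valued; so I would enlarge the state by an auxiliary pair $(\zeta,\theta)$ solving $\dot\zeta=\theta+f_1$, $\dot\theta=\Delta\zeta$, $\zeta(0)=\theta(0)=0$ (again governed by the reduced wave operator). A one--line computation then shows that $w:=W+\int_0^t\theta$ — where $(W,\partial_tW)$ is the $L^2\times H^{-1}$ wave solution of $\partial_t^2W=\Delta W+f_2$, $W(0)=\mathring{u}_1$, $\partial_tW(0)=\mathring{u}_2$ — solves precisely $\partial_t^2w=\Delta w+f_2+\Delta\int_0^tf_1$ with the prescribed Cauchy data, and $\int_0^t\theta$ lands in $L^2$ by the smoothing of the wave propagator, so $w\in C(I,L^2(\R{n}))$. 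Thus \eqref{wave}--\eqref{boundary} becomes \eqref{P} with $E:=L^2(\R{n})\times H^{-1}(\R{n})\times L^2(\R{n})\times H^{-1}(\R{n})$, $\mathcal A$ the direct sum of two copies of the reduced wave operator (so $(\A_1)$--$(\A_2)$ hold), $\widetilde x_0=(\mathring{u}_1,\mathring{u}_2,0,0)$, and $\widetilde F$ feeding $f_2$ into the second coordinate and $f_1$ into the third, the arguments of $h^2_j$ being the bounded functionals $\langle k^2_j(t),\cdot\rangle$ applied to the component combination reading off $w$, minus the fixed constant $\langle k^2_j(t),\mathring{u}_1\rangle$.

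\emph{Verification of the hypotheses.} Conditions $(\F_1)$--$(\F_2)$ for $\widetilde F$ follow from $(\text{h}_1)$ (nonemptiness and convexity of the intervals $[h^i_1(t,x,\cdot),h^i_2(t,x,\cdot)]$, plus existence of $L^1(I,L^2(\R{n}))$--selections). Condition $(\F_3)$ follows from $(\text{h}_2)$ together with the fact that $v_n\rightharpoonup v$ in $L^2(\R{n})$ forces $\langle k^2_j(t),v_n\rangle\to\langle k^2_j(t),v\rangle$: lower, resp.\ upper, semicontinuity of $h^2_1(t,x,\cdot)$, resp.\ $h^2_2(t,x,\cdot)$, passes to the weak limit of selections via Mazur's lemma and a pointwise--a.e.\ argument on the convex combinations (the $f_1$--part of $\widetilde F$ does not depend on the state). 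Condition $(\F_5)$ holds with $\eta\equiv 0$, since $E$ is reflexive and hence $\beta$ vanishes on bounded sets. Finally, $(\text{h}_3)$ (the bounds $\int_{\R{n}}c_1^2\<b_1^2$ and $\int_{\R{n}}c_2^2\<b_2^2(1+r)^2$ with $b_1,b_2\in L^1(I)$) yields the sublinear growth \eqref{mu} with $\mu=C(b_1+b_2)$. Hence, by Corollary \ref{solsetcor}, the solution set $S_{\widetilde F}(\widetilde x_0)$ of this \eqref{P} is an $R_\delta$--subset of $(C(I,E),w)$.

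\emph{Transport of the topology.} Let $\Pi$ be the bounded (hence weak--weak continuous) affine map reading off $w$ from the state, composed with the routine integration bookkeeping identifying integrated solutions of \eqref{P} with the mild solutions of the reduced system, exactly in the spirit of Lemma \ref{mono}. Then $\Pi$ carries $S_{\widetilde F}(\widetilde x_0)$ onto $\mathcal S(\mathring{u}_1,\mathring{u}_2)$: an integrated solution produces a weak solution by construction, and conversely each weak solution, together with a witnessing selection pair $(f_1,f_2)$, lifts to an integrated solution. One checks, as in the proof of Theorem \ref{existence}, that $\mathcal S(\mathring{u}_1,\mathring{u}_2)$ is weakly compact in $C(I,L^2(\R{n}))$ (the smoothing by the wave propagators keeps the relevant components in $L^2$), so $\Pi$ restricted to the weakly compact $S_{\widetilde F}(\widetilde x_0)$ is a perfect surjection onto it. Over a fixed $w$, the fibre consists of the integrated solutions whose $(f_1,f_2)$ lies in the intersection of the two interval constraints with the affine relation $f_2+\Delta\int_0^\cdot f_1=\partial_t^2w-\Delta w$; since the map $f_1\mapsto\int_0^\cdot\theta$, whose value at time $t$ is $-\int_0^tf_1(s)\,ds+\int_0^t\cos((t-s)\sqrt{-\Delta})f_1(s)\,ds$, is injective and affine (a Volterra/Fourier argument), that intersection is convex and the state depends affinely on $f_2$, so every fibre is convex, hence acyclic. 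By the Vietoris--Begle mapping theorem (cf.\ \cite{spanier}), $\Pi$ induces an isomorphism in Alexander--Spanier cohomology, and therefore $\mathcal S(\mathring{u}_1,\mathring{u}_2)$ inherits the acyclicity of the $R_\delta$--set $S_{\widetilde F}(\widetilde x_0)$. The genuine obstacle is the first step: arranging a reformulation in which $\{S(t)\}$ is equicontinuous while the irregular source $\Delta\int_0^tf_1$ is still accommodated forces both the $H^{-1}$ velocity slots and the auxiliary coordinates $(\zeta,\theta)$, and the bookkeeping linking the paper's normalization of ``integrated solution'' to the mild solution of the reduced system must be done with care; the non-uniqueness of the lift $w\mapsto U$ is why the conclusion is acyclicity obtained via Vietoris--Begle rather than the full $R_\delta$--property.
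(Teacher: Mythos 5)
Your overall architecture coincides with the paper's: recast \eqref{wave}--\eqref{boundary} as an instance of \eqref{P} in a product Hilbert space, invoke Corollary \ref{solsetcor} to get an $R_\delta$ solution set upstairs, and push acyclicity down through the projection onto the $w$-component via Vietoris--Begle, the fibres being acyclic because the admissible pairs $(f_1,f_2)$ over a fixed $w$ form a convex set. Where you diverge is the reduction itself. The paper works on $E=L^2(\R{n})\times L^2(\R{n})$ with $A(u_1,u_2)=(u_2,\Delta u_1)$, $D(A)=H^2(\R{n})\times L^2(\R{n})$: there $A$ generates only an integrated semigroup, $S(t)(\mathring u_1,\mathring u_2)=\bigl(\int_0^t w,\,w(t)-\mathring u_1\bigr)$ --- exactly the non--Hille--Yosida situation the abstract theory is designed for --- the source $f_1$ is fed into the \emph{first} slot of the first-order system, which in the integrated-solution semantics produces the term $\Delta\int_0^tf_1$ with no auxiliary variables, and $F(t,(u_1,u_2))=F_1(t)\times F_2(t,u_2)$ only ever sees the $L^2$-valued coordinate $u_2=w-\mathring u_1$, so the kernels $k_j^2(t,\cdot)\in L^2(\R{n})$ act as bounded functionals and $(\F_3)$ follows directly from $(\text{h}_2)$.

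The genuine gap in your version is created by the $H^{-1}$ slots. Whichever normalization of ``integrated solution'' you adopt for your doubled system, the quantity $w(t)-\mathring u_1$ that the nonlinearity must see is assembled from coordinates of the form $\int_0^t\partial_tW\,ds$ and $\int_0^t\theta\,ds$, i.e.\ from the $H^{-1}(\R{n})$ slots of the state (or of its time integral); it is \emph{not} the $L^2$ slot carrying $W$. The functionals $v\mapsto\int_{\R{n}}k_j^2(t,y)v(y)\,dy$ are bounded on $L^2(\R{n})$ but not on $H^{-1}(\R{n})$, since the hypotheses give only $k_j^2(t,\cdot)\in L^2(\R{n})$. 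Hence your $\widetilde F$ is not well defined on $E$, and weak convergence in the $H^{-1}$ slots does not pass through $\langle k_j^2(t),\cdot\rangle$, so $(\F_2)$--$(\F_3)$ cannot be verified as claimed; the Mazur-lemma argument you sketch needs weak $L^2$-convergence of exactly the coordinates you have demoted to $H^{-1}$. (Your motivating worry --- that on $L^2\times L^2$ the entry $\cos(t\sqrt{-\Delta})-\mathrm{Id}$ of $S(t)$ is not norm-continuous --- is legitimate and in fact touches the one place where the paper's own verification of $(\A_2)$ is only sketched; but the proposed cure introduces the defect above, and your side claim that the reduced wave operator fails to generate a $C_0$-semigroup on $L^2\times H^{-1}$ is incorrect: the wave group acts there. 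Any repair of your route must keep every coordinate that $F$ reads inside an $L^2$ slot, which is precisely what the paper's two-component reduction achieves.)
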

\begin{proof}
Let \[E:=L^2(\R{n})\times L^2(\R{n}),\] \[D(A):=H^2(\R{n})\times L^2(\R{n}),\] \[E_0:=H^1(\R{n})\times L^2(\R{n}),\] \[D(A_0):=H^2(\R{n})\times H^1(\R{n}).\] Assume that the Hilbert space $E$ is furnished with the norm \[||(x,y)||_E:=\f(||x||_2^2+||y||_2^2\g)^\frac{1}{2}\] and $E_0$ with \[|||(x,y)|||:=\f(||x||_2^2+\langle \nabla x,\nabla x\rangle_{L^2(\R{n},\R{n})}+||y||_2^2\g)^\frac{1}{2}.\] The linear operator $A\colon D(A)\to E$, given by $A(u_1,u_2):=(u_2,\Delta u_1)$, generates an exponentially bounded non-degenerate integrated semigroup $\{S(t)\}_{t\geqslant 0}$ on $E$ such that \[S(t)(\mathring{u}_1,\mathring{u}_2)=\f(\int_0^tw(s)\,ds,w(t)-\mathring{u}_1\g),\] where $w\in C^2([0,\infty),L^2(\R{n}))$ satisfies
\begin{equation*}
\begin{dcases}
\frac{d^2}{dt^2}\,\langle w(t),v\rangle=\langle w(t),\Delta v\rangle\\
\res{\frac{d}{dt}\,\langle w(t),v\rangle}{t=0}=\langle\mathring{u}_2,v\rangle\\
w(0)=\mathring{u}_1
\end{dcases}
\end{equation*}
for every $v\in H^2(\R{n})$ (see \cite[Th.7.1.]{thieme}). It is a consequence of the fact that the part $A_0\colon D(A_0)\to E_0$ of $A$ generates a strongly continuous semigroup $\{T_0(t)\}_{t\geqslant 0}$ on $(E_0,|||\cdot|||)$, satisfying $||T_0(t)||_{\mathcal L}\<e^{2t}$ (\cite[7.4.5.]{pazy}).\par We claim that the resolvent set $\rho(A)$ contains $(2,\infty)$. For every $(f_1,f_2)\in C^\infty_0(\R{n})\times C^\infty_0(\R{n})$ there exists a unique $(u_1,u_2)\in D(A)$ such that 
\[\begin{cases}
u_1-\lambdaup u_1=f_1,\\
u_2-\lambdaup\Delta u_1=f_2,
\end{cases}\]
for every real $\lambdaup\neq 0$ (see \cite[Lem.7.4.3.]{pazy}). Considering this, we are able to estimate:
\begin{align*}
||(f_1,f_2)||_E^2&=\langle u_1-\lambdaup u_2,u_1-\lambdaup u_2\rangle+\langle u_2-\lambdaup\Delta u_1,u_2-\lambdaup\Delta u_1\rangle\\&=||u_1||_2^2+\lambdaup^2||u_2||_2^2-\lambdaup\langle u_1,u_2\rangle+||u_2||_2^2-\lambdaup\langle\Delta u_1,u_2\rangle-\lambdaup\langle u_2,\Delta u_1\rangle+\lambdaup^2||\Delta u_1||_2^2\\&\geqslant||u_1||_2^2+||u_2||_2^2-2\lambdaup\langle u_1,u_2\rangle=||(u_1,u_2)||_E^2-2\lambdaup\langle u_1,u_2\rangle\geqslant(1-\lambdaup)||(u_1,u_2)||_E^2\\&\geqslant(1-2\lambdaup)^2||(u_1,u_2)||_E^2
\end{align*}
for $\lambdaup\in\f(0,\frac{1}{2}\g)$. In other words, for every $\lambdaup\in\f(0,\frac{1}{2}\g)$ and $f\in C^\infty_0(\R{n})\times C^\infty_0(\R{n})$ there exists a unique $u\in D(A)$ such that $u-\lambdaup Au=f$ and 
\begin{equation}\label{resolvent}
||u||_E\<(1-2\lambdaup)^{-1}||f||_E.
\end{equation}
Since $C^\infty_0(\R{n})\times C^\infty_0(\R{n})$ is dense in $E$ and the operator $A$ is closed, \eqref{resolvent} entails $\text{Im}(\lambdaup+A)=E$ for $\lambdaup>2$, i.e. $(2,\infty)\subset\rho(A)$. From \eqref{resolvent} it follows also that 
\begin{equation}\label{res}
||R_\lambdaup||_{\mathcal L}\<\frac{1}{\lambdaup-2}\;\;\text{for }\lambdaup>2,
\end{equation}
with $R_\lambdaup:=(\lambdaup+A)^{-1}$.\par  For every $(u_1,u_2)\in D(A_0)$ the norm $|||\cdot|||$ possesses the following bound:
\begin{align*}
|||(u_1,u_2)|||^2&=||u_1||_2^2+\int_{\R{n}}\langle\nabla u_1(x),\nabla u_1(x)\rangle_{\R{n}}\,dx+||u_2||_2^2=||u_1||_2^2-\langle\Delta u_1,u_1\rangle+||u_2||_2^2\\&\<||u_1||_2^2+||\Delta u_1||_2||u_1||_2+||u_2||_2^2\<||u_1||_2^2+\frac{1}{2}||u_1||_2^2+\frac{1}{2}||\Delta u_1||_2^2+||u_2||_2^2\\&\<\frac{3}{2}\f(||u_1||_2^2+||u_2||_2^2\g)+\frac{1}{2}\f(||u_2||_2^2+||\Delta u_1||_2^2\g)=\frac{3}{2}||(u_1,u_2)||_E^2+\frac{1}{2}||A(u_1,u_2)||_E^2.
\end{align*}
Whence \[||(u_1,u_2)||_E\<|||(u_1,u_2)|||\<\sqrt{2}\f(||(u_1,u_2)||_E+||A(u_1,u_2)||_E\g).\] That being said, for every initial value $x\in D(A_0)$ there exists a unique solution $u\in C^1(\R{}_+,D(A_0))$ of the abstract Cauchy problem
\begin{equation}\label{ACP}
\begin{cases}
\dot{u}(t)=Au(t),\\
u(0)=x,
\end{cases}
\end{equation}
satisfying $||u(t)||_E\<|||u(t)|||\<e^{2t}|||x|||\<\sqrt{2}e^{2t}(||x||_E+||Ax||_E)$. For $\lambdaup>2$, the function $w(t):=R_\lambdaup u(t)$ is a solution of \eqref{ACP} with  
\begin{align*}
||w(t)||_E&\<\sqrt{2}e^{2t}\f(||R_\lambdaup x||_E+||AR_\lambdaup x||_E\g)\<\sqrt{2}e^{2t}\f(||R_\lambdaup x||_E+\lambdaup||R_\lambdaup x||_E+||x||_E\g)\\&\<\sqrt{2}\f(\frac{1+\lambdaup}{\lambdaup -2}+1\g)e^{2t}||x||_E,
\end{align*}
by \eqref{res}. Let $v(t):=\int_0^tu(s)\,ds$ be the integrated solution. Then $v(t)=\lambdaup\int_0^tw(s)\,ds-w(t)+R_\lambdaup x$. Moreover, the operator $A$ generates an integrated semigroup $\{S(t)\}_{t\geqslant 0}$, given by $S(t)x=v(t)$ for $x$ taken from the dense subspace $D(A_0)$ of the space $E$ (see \cite[Th.4.2.]{neu}). Therefore,
\begin{align*}
||S(t)x||_E&\<\lambdaup\int_0^t||w(s)||_E\,ds+||w(t)||_E+||R_\lambdaup x||_E\\&\<(1+\lambdaup)\sqrt{2}\f(\frac{1+\lambdaup}{\lambdaup -2}+1\g)e^{2t}||x||_E+\frac{1}{\lambdaup-2}||x||_E\<\frac{\sqrt{2}\lambdaup(2\lambdaup+1)}{\lambdaup-2}e^{2t}||x||_E
\end{align*}
for every $x\in E$ and $\lambdaup>2$. Eventually, we obtain the following exponential bound for the semigroup $\{S(t)\}_{t\geqslant 0}$: 
\[||S(t)x||_E\<\sqrt{2}\inf_{\lambdaup\in(2,\infty)}\frac{\lambdaup(2\lambdaup+1)}{\lambdaup-2}\,e^{2t}||x||_E=\sqrt{2}(4\sqrt{5}+9)e^{2t}||x||_E.\]
This semigroup is also equincontinuous, since \[||S(t)-S(\tau)||_{\mathcal L}=\sup_{||(\mathring{u}_1,\mathring{u}_2)||_E\<1}||(S(t)-S(\tau))(\mathring{u}_1,\mathring{u}_2)||_E\<\int_\tau^t||w(s)||_2\,ds+||w(t)-w(\tau)||_2.\] \par\noindent Define $F_1\colon I\map L^2(\R{n})$ and $F_2\colon I\times L^2(\R{n})\map L^2(\R{n})$ by the formulae
\[F_1(t):=\f\{v\in L^2(\R{n})\colon h^1_1\f(t,x,\int_{\R{n}}\!k_1^1(t,y)\,dy\g)\<v(x)\<h^1_2\f(t,x,\int_{\R{n}}\!k_2^1(t,y)\,dy\g)\text{ a.e. on }\R{n}\g\}\!.\]
and
\begin{align*}
&F_2(t,u):=\\&\f\{v\in L^2(\R{n})\colon h^2_1\f(t,x,\int_{\R{n}}\!k_1^2(t,y)u(y)\,dy\g)\<v(x)\<h^2_2\f(t,x,\int_{\R{n}}\!k_2^2(t,y)u(y)\,dy\g)\text{ a.e. on }\R{n}\g\}\!.
\end{align*}
Let $F\colon I\times E\map E$ be a map given by $F(t,(u_1,u_2)):=F_1(t)\times F_2(t,u_2)$. Assume that the mapping $F$ forms a multivalued perturbation of the abstract semilinear integro-differential inclusion \eqref{P}. To be able to apply Corollary \ref{solsetcor}. we need to verify conditions $(\A_2)$ and $(\F_1)$-$(\F_5)$. As far as condition $(\A_2)$ is concerned, we have verified it above. Hypotheses $(\F_1)$ and $(\F_2)$ follow immediately from assumption $(\text{h}_1)$.\par Take $(u_1,u_2)\in E$ and $(f_1,f_2)\in F(t,(u_1,u_2))$. Then 
\begin{equation*}
\begin{dcases}
|f_1(x)|\<\max\f\{\f|h_1^1\f(t,x,\int_{\R{n}}k_1^1(t,y)\,dy\g)\g|,\f|h_2^1\f(t,x,\int_{\R{n}}k_2^1(t,y)\,dy\g)\g|\g\}\<c_1(t,x)&\\
|f_2(x)|\<\max\f\{\f|h_1^2\f(t,x,\int_{\R{n}}k_1^2(t,y)u_2(y)\,dy\g)\g|,\f|h_2^2\f(t,x,\int_{\R{n}}k_2^2(t,y)u_2(y)\,dy\g)\g|\g\}\<c_2(t,x,||u_2||_2)&\\
\end{dcases}
\end{equation*}
and 
\begin{equation*}
\begin{dcases}
||f_1||_2\<b_1(t)&\text{a.e. on }I\\
||f_2||_2\<b_2(t)(1+||u_2||_2)&\text{a.e. on }I.\\
\end{dcases}
\end{equation*}
Whence \[||F(t,(u_1,u_2))||^+_2\<b_1(t)+b_2(t)(1+||u_2||_2)\<(b_1(t)+b_2(t))(1+||(u_1,u_2)||_E).\] In other words, $F$ satisfies the sublinear growth condition \eqref{mu}. Notice also that the multimap $F(t,\cdot)$ is completely continuous (a.e. on $I$), i.e. it maps bounded sets into relatively weakly compact sets (remember that $L^2(\R{n})$ is reflexive).\par It remains to give reason for assumption $(\F_3)$. Assume that $(u_1^k,u_2^k)\xrightharpoonup[k\to\infty]{E}(u_1,u_2)$ and $(f_1^k,f_2^k)\xrightharpoonup[k\to\infty]{E}(f_1,f_2)$ with $(f_1^k,f_2^k)\in F(t,(u_1^k,u_2^k))$ for $k\geqslant 1$. Observe that for $k\geqslant 1$ \[f_2^k(x)\in\f[h_1^2\f(t,x,\int_{\R{n}}k_1^2(t,y)u_2^k(y)\,dy\g),h_2^2\f(t,x,\int_{\R{n}}k_2^2(t,y)u_2^k(y)\,dy\g)\g]\;\;\text{a.e. on }\R{n}\] and \[z_j^k:=\int_{\R{n}}k_j^2(t,y)u_2^k(y)\,dy\xrightarrow[k\to\infty]{}z_j:=\int_{\R{n}}k_j^2(t,y)u_2(y)\,dy\;\;\text{for a.a. }t\in I\text{ and }j=1,2.\]  Whence \[\overline{\co}\bigcup_{m=k}^\infty\f\{f_2^m(x)\g\}\subset\f[\inf_{m\geqslant k}h_1^2\f(t,x,z^k_1\g),\sup_{m\geqslant k}h_2^2\f(t,x,z^k_2\g)\g]\] and, by $(\text{h}_2)$, \[\bigcap_{k=1}^\infty\overline{\co}\bigcup_{m=k}^\infty\f\{f_2^m(x)\g\}\!\subset\f[\sup_{k\geqslant 1}\inf_{m\geqslant k}h_1^2\f(t,x,z^k_1\g),\inf_{k\geqslant 1}\sup_{m\geqslant k}h_2^2\f(t,x,z^k_2\g)\g]\!\subset\f[h_1^2\f(t,x,z_1\g),h_2^2\f(t,x,z_2\g)\g]\] for a.a. $t\in I$, for a.a. $x\in\R{n}$. Since $f_2(x)\in\bigcap\limits_{k=1}^\infty\overline{\co}\bigcup\limits_{m=k}^\infty\f\{f_2^m(x)\g\}$ a.e. on $\R{n}$ (cf. Corollary \ref{convth}.), we get $f_2\in F_2(t,u_2)$. Notice that $F_1(t)$ is a weakly closed subset of $L^2(\R{n})$. Hence, $f_1\in F_1(t)$. Consequently, the graph of $F(t,\cdot)$ is sequentially closed in $(E,w)\times(E,w)$ for a.a. $t\in I$.\par Owing to Corollary \ref{solsetcor}, we gain confidence that the set $S_{\!F}(\mathring{u}_1,\mathring{u}_2)$ of all integrated solutions to the problem
\begin{equation}\label{cauchy}
\begin{gathered}
\dot{u}(t)\in Au(t)+F\f(t,\int_0^tu(s)\,ds\g)\;\;\;\text{on }I,\\
u(0)=(\mathring{u}_1,\mathring{u}_2).
\end{gathered}
\end{equation}
forms an $R_\delta$ subset of the space $C(I,E)$ furnished with the weak topology. Consider a projection $\Pi\colon S_{\!F}(\mathring{u}_1,\mathring{u}_2)\to C(I,L^2(\R{n}))$, given by $\Pi(u_1,u_2):=u_2$. A short glimpse at the definition of an integrated solution to the Cauchy problem \eqref{cauchy} leads to conclusion that $\mathring{u}_1+\Pi\f(S_{\!F}(\mathring{u}_1,\mathring{u}_2)\g)={\mathcal S}(\mathring{u}_1,\mathring{u}_2)$ (see \cite[Section 7.]{thieme} for clues). One easily sees that the mapping $\w{\Pi}\colon\f(S_{\!F}(\mathring{u}_1,\mathring{u}_2),w\g)\to\f(\Pi\f(S_{\!F}(\mathring{u}_1,\mathring{u}_2\g),w\g)$ is continuous, surjective and proper. Moreover, a careful look at the set \[\f\{(u_1,u_2)\in S_{\!F}(\mathring{u}_1,\mathring{u}_2)\colon u_2=v\g\}\] reveals that it is essentially an $R_\delta$-type set. In practice, this means that the fiber $\Pi^{-1}(\{v\})$ is an acyclic subset of the space $(C(I,E),w)$. Therefore, $\w{\Pi}$ is a Vietoris mapping and $\w{H}^*\f(\f(S_{\!F}\f(\mathring{u}_1,\mathring{u}_2\g),w\g)\g)\approx\w{H}^*\f(\f(\Pi\f(S_{\!F}\f(\mathring{u}_1,\mathring{u}_2\g)\g),w\g)\g)$ (in view of the Vietoris-Begle mapping theorem for Alexander-Spanier cohomology functor \cite[Th.6.9.15]{spanier}). Clearly, the solution set ${\mathcal S}(\mathring{u}_1,\mathring{u}_2)$ must be an acyclic subset of $(C(I,L^2(\R{n})),w)$.
\end{proof}

Let us consider the following initial boundary value problem defined on $I\times\R{}$:
\begin{equation}\label{cauchy2}
\begin{dcases*}
\frac{\partial}{\partial t}u(t,x)-\sum_{j=0}^ka_j D^j u(t,x)=U(t)u(t,\cdot)(x)+h(t,x)&in $I\times\R{}$\\
u(0,x)=\mathring{u}(x)&on $\R{}$\\
||h(t,\cdot)||_2\<r(t,u(t,\cdot))&on $I$.
\end{dcases*}
\end{equation}
\par Let $E$ denote the complex Hilbert space $L^2(\R{},\mathbb{C})$. Our hypotheses on the mappings $r\colon I\times E\to\R{}_+$ and $U(t)\colon E\to E$ are the following:
\begin{itemize}
\item[$(\text{U}_1)$] $U(t)$ is a linear bounded operator for every $t\in I$, $U(\cdot)v$ is measurable for every $v\in E$ and $||U(\cdot)||_{\mathcal L}\in L^1(I)$,
\item[$(\text{r}_1)$] the function $r(\cdot,u)$ is measurable for any $u\in E$,
\item[$(\text{r}_2)$] the function $r(t,\cdot)$ is weakly usc for $t\in I$, 
\item[$(\text{r}_3)$] $r(t,u)\<b(t)(1+||u||_2)$ a.e. on $I$ with $b\in L^1(I)$.
\end{itemize}
\par By the weak solution of the problem \eqref{cauchy2} we mean a function $u\in C(I,E)$ such that $\langle u(\cdot),v\rangle$ is differentiable for every $v\in H^k(\R{})$ and $u$ satisfies
\begin{equation*}
\begin{dcases*}
\frac{d}{dt}\,\langle u(t),v\rangle=\langle\mathring{u},v\rangle+\f\langle u(t),\sum_{j=0}^ka_j D^j v\g\rangle+\f\langle\int_0^tU(s)u(s)+h(s)\,ds,v\g\rangle&for $t\in I$\\
u(0)=0
\end{dcases*}
\end{equation*}
for some function $h\in L^1(I,E)$ such that $||h(t)||_2\<r(t,u(t))$ on $I$.\par Define the polynomial $p(x):=\sum\limits_{j=0}^ka_j(ix)^j$ ($i=$ imaginary unit). Let $a_0,\ldots,a_k\in\mathbb{C}$ and $\omega:=\max\{0,\sup\limits_{x\in\R{}}\RE(p(x))\}$. Fix a constant $L_0>1$ such that $|p(x)|>|a_kx^k|/2$ for all $|x|\geqslant L_0$. Put 
\begin{equation}\label{M}
M:=\f(\frac{32(L_0)^{-2k+1}}{(2k-1)|a_k|^2}+2L_0T^2+4T^2\f(\frac{k(k+1)R}{|a_k|}\g)^2L_0^{-1}+4L_0T^2\sup_{|x|\<L_0}\frac{|p'(x)|}{|p(x)|}\g)^\frac{1}{2},
\end{equation}
where $R:=\max\limits_{1\<j\<k}|a_j|$.
\begin{theorem}
Assume that hypotheses $(\text{U}_1)$ and $(\text{r}_1)$-$(\text{r}_3)$ are satisfied. Suppose that $a_k\neq0$ and $a_j(-i)^{3j}\in\R{}$ for $j=0,\ldots,k$. If $\sup\limits_{x\in\R{}}\RE(p(x))<\infty$, then for every $\mathring{u}\in L^2(\R{})$ the set ${\mathcal S}(\mathring{u})$ of weak solutions to problem \eqref{cauchy2} forms an $R_\delta$ subset of the space $C(I,L^2(\R{},\mathbb{C}))$ endowed with the weak topology.
\end{theorem}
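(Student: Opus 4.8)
The plan is to recognise \eqref{cauchy2} as an instance of the abstract integro-differential inclusion \eqref{P} on the separable reflexive Hilbert space $E:=L^2(\R{},\mathbb{C})$ and then to invoke Corollary \ref{solsetcor}. I would take $A:=\sum_{j=0}^ka_jD^j$ with its maximal $L^2$-domain, $x_0:=\mathring{u}$, and define the multivalued perturbation $F\colon I\times E\map E$ by $F(t,w):=U(t)w+D(0,r(t,w))$, so that $U(t)w+h(t)\in F(t,w)$ precisely when $||h(t)||_2\<r(t,w)$.

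The analytic core is to verify $(\A_2)$ for this $A$. The Fourier transform conjugates $A$ to multiplication by the symbol $p(\xi)=\sum_{j=0}^ka_j(i\xi)^j=\sum_{j=0}^k(a_ji^j)\xi^j$. Since $(-i)^{3j}=i^j$, the hypothesis $a_j(-i)^{3j}\in\R{}$ says exactly that $p$ has real coefficients, hence $p(\R{})\subset\R{}$; together with $a_k\neq0$ and $\sup_{x\in\R{}}\RE(p(x))=\sup_xp(x)<\infty$ this forces $k$ to be even with negative leading term, so $p(\xi)\to-\infty$ as $|\xi|\to\infty$. Consequently $A$ is self-adjoint and bounded above by $\omega=\max\{0,\sup_xp(x)\}$, so it generates the $C_0$-semigroup $T(t)$ of multiplication by $e^{tp(\cdot)}$ with $||T(t)||_{\mathcal L}\<e^{\omega t}$, and hence $A$ generates the non-degenerate once-integrated semigroup $S(t)=\int_0^tT(s)\,ds$, which is multiplication by $(e^{tp(\cdot)}-1)/p(\cdot)$ (with value $t$ where $p$ vanishes); non-degeneracy holds because $p$ has only finitely many zeros. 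The explicit bound $||S(t)||_{\mathcal L}\<Me^{\omega t}$ with $M$ as in \eqref{M} is obtained by estimating this multiplier separately on $\{|\xi|\<L_0\}$ and on $\{|\xi|>L_0\}$, where $|p(\xi)|>|a_k\xi^k|/2$ so that the symbol is controlled by $2/|a_k\xi^k|$, the integral of $\xi^{-2k}$ over $\{|\xi|>L_0\}$ producing the term $L_0^{-2k+1}/((2k-1)|a_k|^2)$. Equicontinuity of $\{S(t)\}_{t\geqslant0}$ follows immediately from $||S(t)-S(\tau)||_{\mathcal L}\<\int_\tau^t||T(s)||_{\mathcal L}\,ds\<|t-\tau|e^{\omega T}$.

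Next I would verify the hypotheses on $F$. Its values are translated closed balls, so $(\F_1)$ holds (nonempty, convex, and weakly compact since $E$ is reflexive); $(\F_2)$ holds because, by $(\text{U}_1)$ and the Pettis measurability theorem, $t\mapsto U(t)w$ is a strongly measurable selection of $F(\cdot,w)$; and $(\F_3)$ holds since $U(t)$ is weakly continuous: if $w_n\rightharpoonup w$ in $E$ and $U(t)w_n+g_n\rightharpoonup\zeta$ with $||g_n||_2\<r(t,w_n)$, then $g_n\rightharpoonup g:=\zeta-U(t)w$ and, by weak lower semicontinuity of the norm together with $(\text{r}_2)$, $||g||_2\<\liminf_n||g_n||_2\<\limsup_n r(t,w_n)\<r(t,w)$, so that $\zeta\in F(t,w)$. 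The sublinear growth \eqref{mu} holds with $\mu:=||U(\cdot)||_{\mathcal L}+b\in L^1(I)$ (using $(\text{U}_1)$ and $(\text{r}_3)$), because $||F(t,w)||^+\<||U(t)||_{\mathcal L}||w||_2+r(t,w)$; and $(\F_5)$ is trivially satisfied with any $\eta\in L^1(I)$, since $\beta$ vanishes on bounded subsets of the reflexive space $E$. As $E$ is separable, Corollary \ref{solsetcor} then yields that the solution set $S_{\!F}(\mathring{u})$ of \eqref{P} with $x_0=\mathring{u}$ is a compact $R_\delta$-subset of $(C(I,E),w)$.

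It remains to identify ${\mathcal S}(\mathring{u})$ with $S_{\!F}(\mathring{u})$. Integrating the weak formulation of \eqref{cauchy2} once in $t$, using $u(0)=0$ and the Fubini identity $\int_0^t\langle\int_0^sg(\tau)\,d\tau,v\rangle\,ds=\langle\int_0^t(t-s)g(s)\,ds,v\rangle$, and then testing against the dense subspace $H^k(\R{})=D(A)=D(A^*)$, one finds that $u\in{\mathcal S}(\mathring{u})$ iff $\int_0^tu(s)\,ds\in D(A)$ and $u(t)=t\mathring{u}+A\int_0^tu(s)\,ds+\int_0^t(t-s)(U(s)u(s)+h(s))\,ds$ for $t\in I$ and some $h\in L^1(I,E)$ with $||h(s)||_2\<r(s,u(s))$ a.e.; since the last condition means precisely $U(s)u(s)+h(s)\in F(s,u(s))$ a.e., this says exactly that $u$ is an integrated solution of \eqref{P}, i.e. $u\in S_{\!F}(\mathring{u})$. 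Hence ${\mathcal S}(\mathring{u})=S_{\!F}(\mathring{u})$ is $R_\delta$ in $(C(I,L^2(\R{},\mathbb{C})),w)$. The main obstacle is the second step: proving that $A$ generates a non-degenerate, exponentially bounded, equicontinuous integrated semigroup with the explicit bound \eqref{M} — this is where the structural hypotheses (reality of $p$ via $a_j(-i)^{3j}\in\R{}$, $a_k\neq0$, $\sup_x\RE(p(x))<\infty$) and the frequency-splitting estimate at $|\xi|=L_0$ are used; the remaining verifications of $(\F_1)$-$(\F_3)$, $(\F_5)$, \eqref{mu} and the re-integration argument are routine.
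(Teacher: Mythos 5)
Your proposal is correct and follows the same overall strategy as the paper: cast \eqref{cauchy2} as an instance of \eqref{P} on $E=L^2(\R{},\mathbb{C})$ with $F(t,w)=U(t)w+\{v\colon||v||_2\<r(t,w)\}$, verify $(\A_2)$, $(\F_1)$--$(\F_3)$, $(\F_5)$ and the sublinear growth \eqref{mu}, invoke Corollary \ref{solsetcor}, and identify ${\mathcal S}(\mathring{u})$ with $S_{\!F}(\mathring{u})$. The one place where you genuinely diverge is the verification of $(\A_2)$. The paper produces the integrated semigroup via Kellermann--Hieber's generation theorem, writes $S(t)f=\frac{1}{\sqrt{2\pi}}\w{\phi_t}\ast f$ with $\phi_t(x)=\int_0^te^{p(x)s}\,ds$, and extracts the exponential bound $Me^{\omega t}$ with the explicit constant \eqref{M} by estimating $||\phi_t||_{1,2}$ on the two frequency regions $\{|x|\<L_0\}$ and $\{|x|>L_0\}$ and passing through the Sobolev embedding $||\w{\phi_t}||_1\lesssim||\phi_t||_{1,2}$. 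You instead observe that, since $a_j(-i)^{3j}=a_ji^j\in\R{}$ makes the symbol $p$ real-valued and $\sup_x p(x)<\infty$, the operator $A$ is self-adjoint and bounded above, hence generates the $C_0$-semigroup of multiplication by $e^{tp}$ with norm $e^{\omega t}$, so that $S(t)=\int_0^tT(s)\,ds$ satisfies $||S(t)||_{\mathcal L}\<te^{\omega t}$ and $||S(t)-S(\tau)||_{\mathcal L}\<|t-\tau|e^{\omega T}$ by Plancherel. On the Hilbert space $L^2$ this is sharper and more elementary (the multiplier norm is just the sup of the symbol, so the detour through $||\w{\phi_t}||_1$ is unnecessary), whereas the paper's kernel estimate is the route one would need on $L^p$, $p\neq2$, or $C_0$; either way an exponential bound of the form required by $(\A_1)$ is obtained, and the specific value \eqref{M} plays no further role since Corollary \ref{solsetcor} replaces $(\F_4)$ by \eqref{mu}. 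Your treatment of $(\F_5)$ (trivial by reflexivity) and your once-integrated reformulation identifying ${\mathcal S}(\mathring{u})$ with $S_{\!F}(\mathring{u})$ are consistent with, and in the latter case more explicit than, the paper's.
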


\begin{proof}
Consider the differential operator $A\colon D(A)\to E$ given by $Af:=\sum\limits_{j=0}^ka_jD^jf$, defined on \[D(A):=\f\{f\in E\colon \sum_{j=0}^ka_jD^jf\in E\;\text{distributionally}\g\}.\] Since $a_k\neq 0$, $D(A)=H^k(\R{})$ (\cite[Th.10.14.]{weidmann}). Assumption $a_j(-i)^{3j}\in\R{}$ for $j=0,\ldots,k$ means that the differential operator $A$ is self-adjoint on $E$ (cf. \cite[Th.10.12.]{weidmann}). In view of \cite[Th.4.1.]{hieber} the operator $A$ generates a norm continuous integrated semigroup $\{S(t)\}_{t\geqslant 0}$ on the space $E$, given by \[S(t)f:=\frac{1}{\sqrt{2\pi}}\w{\phi_t}\ast f,\] where $\phi_t(x):=\int_0^te^{p(x)s}\,ds$ and $\sim$ denotes the inverse of the Fourier transformation. Easy calculations show that
\begin{align*}
||\phi_t||_2^2&\<\int\limits_{-\infty}^{-L_0}\frac{|e^{p(x)t}-1|^2}{|p(x)|^2}\,dx+\int\limits_{-L_0}^{L_0}|e^{p(x)t}t|^2\,dx+\int\limits_{L_0}^\infty\frac{|e^{p(x)t}-1|^2}{|p(x)|^2}\,dx\\&\<\int\limits_{-\infty}^{-L_0}\frac{16e^{2\omega t}}{|a_kx^k|^2}\,dx+\int\limits_{-L_0}^{L_0}t^2e^{2\omega t}\,dx+\int\limits_{L_0}^\infty\frac{16e^{2\omega t}}{|a_kx^k|^2}\,dx=\f(\frac{32(L_0)^{-2k+1}}{(2k-1)|a_k|^2}+2L_0t^2\g)e^{2\omega t}.
\end{align*}
For $|x|\geqslant L_0$ we have
\begin{align*}
\frac{|p'(x)|}{|p(x)|}&=\frac{|\sum_{j=1}^kja_ji^jx^{j-1}|}{|p(x)|}\<2\frac{\sum_{j=1}^kj|a_j||x^{j-1}|}{|a_kx^k|}=2\sum_{j=1}^k\frac{j|a_j|}{|a_k||x^{k-j+1}|}\<\frac{2R}{|a_k|}\sum_{j=1}^k\frac{j}{|x|}\\&=\frac{k(k+1)R}{|a_k||x|}.
\end{align*}
Whence
\begin{align*}
\f\Arrowvert\frac{d}{dx}\phi_t\g\Arrowvert_2^2&\<2\f(\,\int\limits_{-\infty}^\infty\f|\frac{p'(x)}{p(x)}te^{p(x)t}\g|^2\,dx+\int\limits_{-\infty}^\infty\f|\frac{p'(x)}{p(x)}\phi_t(x)\g|^2\,dx\g)\\&\<2\f(\,\int\limits_{|x|\geqslant L_0}\frac{t^2e^{2\omega t}(k(k+1)R)^2}{|a_k|^2|x|^2}\,dx+\int\limits_{-L_0}^{L_0}\f|\frac{p'(x)}{p(x)}te^{p(x)t}\g|^2\,dx\g)\\&\<4\f(t^2\f(\frac{k(k+1)R}{|a_k|}\g)^2L_0^{-1}+L_0t^2\sup_{|x|\<L_0}\frac{|p'(x)|}{|p(x)|}\g)e^{2\omega t}.
\end{align*}
Eventually,
\begin{equation}\label{M2}
||\phi_t||_{1,2}\<\f(\frac{32(L_0)^{-2k+1}}{(2k-1)|a_k|^2}+2L_0t^2+4t^2\f(\frac{k(k+1)R}{|a_k|}\g)^2L_0^{-1}+4L_0t^2\sup_{|x|\<L_0}\frac{|p'(x)|}{|p(x)|}\g)^\frac{1}{2}e^{\omega t}.
\end{equation}
Applying \cite[Lem.4.4]{hieber}, \eqref{M} and \eqref{M2} we obtain the following exponential bound for our semigroup:
\begin{equation}\label{S}
||S(t)f||_2\<\frac{1}{\sqrt{2\pi}}||\w{\phi_t}||_1||f||_2\<||\phi_t||_{1,2}||f||_2\<Me^{\omega t}||f||_2.
\end{equation}
As a result, assumption $(\A_2)$ is met.\par Define a multimap $F\colon I\times E\map E$ by the formula \[F(t,u):=U(t)u+\f\{v\in E\colon ||v||_2\<r(t,u)\g\}.\] From $(\text{U}_1)$ and $(\text{r}_1)$ it follows straightforwardly that $F$ satisfies $(\F_1)$-$(\F_2)$. Moreover, \[||F(t,u)||_2^+\<||U(t)||_{\mathcal L}||u||_2+r(t,u)\<(b(t)+||U(t)||_{\mathcal L})(1+||u||_2)\;\;\text{ a.e. on }I,\] i.e. \eqref{mu} holds. The set $\bigcup\limits_{u\in\Omega}\f\{v\in E\colon ||v||_2\<r(t,u)\g\}$ is relatively weakly compact for a.a. $t\in I$ and for any bounded $\Omega\subset E$, since $E$ is reflexive. Whence \[\beta(F(t,\Omega))\<\beta(U(t)\Omega)+\beta\f(\bigcup\limits_{u\in\Omega}\f\{v\in E\colon ||v||_2\<r(t,u)\g\}\g)\<||U(t)||_{\mathcal L}\beta(\Omega)\;\;\text{a.e. on }I.\] \par As it comes to condition $(\F_3)$, let us assume that $u_k\xrightharpoonup[n\to\infty]{E}u$ and $g_k\xrightharpoonup[n\to\infty]{E}g$, where $g_k\in F(t,u_k)$ for $\K$. Suppose that $g_k=U(t)u_k+f_k$. Observe that \[f_k=g_k-U(t)u_k\xrightharpoonup[k\to\infty]{E}g-U(t)u\] and $||g-U(t)u||_2\<\liminf\limits_{k\to\infty}||f_k||_2\<\limsup\limits_{k\to\infty}r(t,u_k)\<r(t,u)$, by $(\text{r}_2)$. Therefore, the weak limit point $g=U(t)u+g-U(t)u\in F(t,u)$. \par By virtue of Corollary \ref{solsetcor}, we know that the set $S_{\!F}(\mathring{u})$ of all integrated solutions of the Cauchy problem \eqref{P} is nonempty $R_\delta$ in the space $C(I,E)$ endowed with the weak topology. One easily sees that ${\mathcal S}(\mathring{u})=S_{\!F}(\mathring{u})$.
\end{proof}

\begin{remark}
In view of \cite[Theorem 4.4.1]{vrabie} the operator $A$, defined as above, generates a $C_0$-semigroup of isometries. The mild solution to the problem \eqref{cauchy2} for $\mathring{u}\in L^2(\R{})=\overline{D(A)}$ coincides with the solution in the sense of Da Prato-Sinestrari i.e., with such a function $u\in C(I,E)$ that  
\begin{equation}\label{prato}
\begin{dcases*}
u(t)=\mathring{u}+A\int_0^tu(s)\,ds+\int_0^tU(s)\int_0^su(\tau)\,d\tau+h(s)\,ds&on $t\in I$\\
||h(t)||_2\<r\f(t,\int_0^tu(s)\,ds\g)&on $I$
\end{dcases*}
\end{equation}
$($cf \cite[Proposition 12.4]{prato}$)$. On the other hand, the mild solution can be thought of as the weak solution in the sense of Ball to the initial boundary value problem \eqref{cauchy2} i.e. a continuous map $u\colon I\to E$ such that for every $v\in D(A^*)$, $\langle u(\cdot),v\rangle$ is absolutely continuous and  
\begin{equation*}
\begin{dcases*}
\frac{d}{dt}\,\langle u(t),v\rangle=\f\langle u(t),\sum_{j=0}^ka_jD^j v\g\rangle+\f\langle U(t)\int_0^tu(s)\,ds+h(t),v\g\rangle&a.e. on $I$\\
u(0)=\mathring{u}\\
||h(t)||_2\<r\f(t,\int_0^tu(s)\,ds\g)&on $I$.
\end{dcases*}
\end{equation*}
\end{remark}\mbox{}\\
\par Let $\Omega\subset\R{N}$ be an open bounded subset with regular boundary $\Gamma$. Consider the following non-densely defined semilinear feedback control system:
\begin{equation}\label{feedback2}
\begin{dcases*}
\frac{\partial x}{\partial t}-\Delta x=U(t)x(t,\cdot)(z)\cdot u(t,z)&a.e. on $I\times\Omega$\\
\res{x}{I\times\Gamma}=0,\;x(0,z)=x_0(z)&a.e. on $\Omega$\\
u(t,z)\in U(t,z,x(t,z))&a.e. on $I\times\Omega,\;\hat{u}\in L^1(I,C(\overline{\Omega}))$
\end{dcases*}
\end{equation}
where $\hat{u}(t):=u(t,\cdot)$. \par The feedback set-valued map $U\colon I\times\overline{\Omega}\times\R{}\map\R{}$ meets the conditions:
\begin{itemize}
\item[$(\uu_1)$] the map $U$ has nonempty closed convex values,
\item[$(\uu_2)$] the map $U(\cdot,\cdot,x(\cdot))$ is ${\mathcal L}(I)\otimes{\mathcal B}(\overline{\Omega})$-measurable for every $x\in C(\overline{\Omega})$,
\item[$(\uu_3)$] $||U(t,z,x)||^+\<\xi(t)(1+|z|)$ a.e. on $I$ for any $(z,x)\in\overline{\Omega}\times\R{}$, where $\xi\in L^1(I,\R{}_+)$.
\item[$(\uu_4)$] \[\underset{\underset{\scriptstyle{y_2\in U(t,z_2,x_2)}}{y_1\in U(t,z_1,x_1)}}{\sup}|y_1-y_2|\<k(t)|z_1-z_2|\] for $z_1,z_2\in\overline{\Omega}$, $x_1,x_2\in\R{}$ and for a.a. $t\in I$ with $k\in L^1(I,\R{}_+)$.
\end{itemize}
By an integrated solution $x$ of the problem \eqref{feedback2} we mean a function $x\in C(I,C(\overline{\Omega}))$ such that \[x(t)=tx_0+\Delta\int_0^tx(s)\,ds+\int_0^t(t-s)\,U(s)x(s)\cdot u(s)\,ds,\;\text{ on }I\] with $u\in L^1(I,C(\overline{\Omega}))$ such that $u(t)(z)\in U(t,z,x(t)(z))$ a.e. on $I\times\Omega$.
\begin{theorem}
Assume that $\{U(t)\}_{t\in I}\subset{\mathcal L}(C(\overline{\Omega}))$, $U(\cdot)x$ is measurable for every $x\in C(\overline{\Omega})$ and $||U(\cdot)||_{\mathcal L}\in L^\infty(I)$. Under conditions $(\uu_1)$-$(\uu_4)$ 
the set ${\mathcal S}(x_0)$ of integrated solutions of the feedback control system \eqref{feedback2} is $R_\delta$ for every $x_0\in C(\overline{\Omega})$.
\end{theorem}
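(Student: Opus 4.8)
The plan is to cast \eqref{feedback2} as an abstract problem of the type \eqref{P} in the separable Banach space $E:=C(\overline{\Omega})$ and then to invoke Corollary \ref{solsetcor}. For the linear part I would take $A:=\Delta$ with domain $D(A):=\f\{f\in C(\overline{\Omega})\colon\Delta f\in C(\overline{\Omega})\text{ distributionally},\ \res{f}{\Gamma}=0\g\}$, which fails to be densely defined, its closure being $C_0(\Omega)$. The elliptic maximum principle shows that $A$ is a Hille--Yosida operator on $C(\overline{\Omega})$, namely $||R_\lambda^n||_{\mathcal L}\leqslant\lambda^{-n}$ for $\lambda>0$ with $R_\lambda:=(\lambda-A)^{-1}$, and that the part of $A$ in $\overline{D(A)}=C_0(\Omega)$ generates the contractive heat semigroup. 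Consequently $A$ generates a non-degenerate once-integrated semigroup $\{S(t)\}_{t\geqslant 0}$ which is locally Lipschitz in the operator norm, $||S(t)-S(\tau)||_{\mathcal L}\leqslant|t-\tau|$; in particular $\{S(t)\}_{t\geqslant 0}$ is equicontinuous and $||S(t)||_{\mathcal L}\leqslant t$, so hypothesis $(\A_2)$ is fulfilled.

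Next I would encode the feedback in the multimap $F\colon I\times E\map E$,
\[F(t,x):=\f\{(U(t)x)\cdot w\colon w\in C(\overline{\Omega}),\ w(z)\in U(t,z,x(z))\text{ for all }z\in\overline{\Omega}\g\},\]
with the pointwise product of functions in $C(\overline{\Omega})$; comparing the two definitions of an integrated solution gives $\mathcal{S}(x_0)=S_{\!F}(x_0)$, so it suffices to verify $(\F_1)$--$(\F_3)$, $(\F_5)$ and the sublinear growth \eqref{mu} for this $F$. Convexity of the values is immediate from $(\uu_1)$ and bilinearity of the product; nonemptiness follows because $(\uu_4)$ makes $z\mapsto U(t,z,x(z))$ Lipschitz in the Hausdorff metric (its bound depending on the $z$-variable alone), while $(\uu_3)$ makes it bounded and $(\uu_1)$ closed convex valued, so a continuous selection exists. $(\F_2)$ is granted by $(\uu_2)$ and measurability of $U(\cdot)x$, and from $(\uu_3)$ together with $||U(\cdot)||_{\mathcal L}\in L^\infty(I)$ one reads off $||F(t,x)||^+\leqslant c\,\xi(t)\,||U(t)||_{\mathcal L}(1+|x|)$ with a constant $c$ depending on $\overline{\Omega}$, which is \eqref{mu}. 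For $(\F_5)$ I would observe that, by $(\uu_4)$ and $(\uu_3)$, the set $W_t$ of all admissible selections $w$ (as $x$ ranges over a bounded $\Omega\subset E$) is equi-Lipschitz in $z$ and equibounded, hence relatively norm-compact by Ascoli's theorem; since $F(t,\Omega)\subset(U(t)\Omega)\cdot W_t$ and multiplication is continuous on bounded sets, one gets $\beta(F(t,\Omega))\leqslant c\,\xi(t)||U(t)||_{\mathcal L}\beta(\Omega)$, i.e. $(\F_5)$ with $\eta:=c\,\xi(\cdot)||U(\cdot)||_{\mathcal L}\in L^1(I)$.

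The main obstacle will be $(\F_3)$, the weak--weak sequential closedness of the graph of $F(t,\cdot)$. Given $x_n\rightharpoonup x$ and $v_n\rightharpoonup v$ in $C(\overline{\Omega})$ with $v_n\in F(t,x_n)$, weak convergence in $C(\overline{\Omega})$ amounts to $\sup_n|x_n|<\infty$ together with $x_n(z)\to x(z)$ for every $z\in\overline{\Omega}$; as $U(t)$ is bounded linear, hence weakly continuous, $(U(t)x_n)(z)\to(U(t)x)(z)$ for every $z$. Writing $v_n=(U(t)x_n)\cdot w_n$ with $w_n(z)\in U(t,z,x_n(z))$, the Ascoli estimate of the previous step extracts a subsequence $w_{n_k}\to w$ uniformly, so $v_{n_k}\to(U(t)x)\cdot w$ pointwise on $\overline{\Omega}$, which forces $v=(U(t)x)\cdot w$; an application of the Convergence Theorem (Corollary \ref{convth}) pointwise in $z$ --- the $z$-uniform control of the feedback supplied by $(\uu_4)$ being exactly what keeps the limit inside $U(t,z,x(z))$ --- then gives $w(z)\in U(t,z,x(z))$, whence $v\in F(t,x)$. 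Making this passage to the limit rigorous, i.e.\ dovetailing the Ascoli compactness of the admissible selections with the Convergence Theorem and the pointwise nature of weak convergence in $C(\overline{\Omega})$, is where the real work lies.

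Finally, $E=C(\overline{\Omega})$ is separable, $A$ satisfies $(\A_2)$, and $F$ satisfies $(\F_1)$--$(\F_3)$, $(\F_5)$ together with the growth condition \eqref{mu}; Corollary \ref{solsetcor}.\ then applies and yields that $S_{\!F}(x_0)=\mathcal{S}(x_0)$ is an $R_\delta$ subset of $C(I,E)$ endowed with the weak topology, for every $x_0\in C(\overline{\Omega})$.
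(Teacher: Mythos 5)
Your proposal is correct and follows essentially the same route as the paper: recast \eqref{feedback2} as \eqref{P} with $E=C(\overline{\Omega})$, the non-densely defined Dirichlet Laplacian generating an equicontinuous non-degenerate integrated semigroup, and a product-type multimap $F(t,x)=U(t)x\cdot\{w\colon w(z)\in U(t,z,x(z))\}$, then verify $(\A_2)$, $(\F_1)$--$(\F_3)$, $(\F_5)$ and \eqref{mu} and invoke Corollary \ref{solsetcor}. The only (immaterial) deviations are that you obtain the integrated semigroup from the Hille--Yosida estimate rather than from Da Prato--Sinestrari's analytic semigroup result, and that you work with the full set of continuous selections of $z\mapsto U(t,z,x(z))$ where the paper uses the convex hull of a Carath\'eodory--Castaing representation; the Ascoli compactness of the admissible selections and the pointwise characterization of weak convergence in $C(\overline{\Omega})$ that you use for $(\F_5)$ and $(\F_3)$ are exactly the paper's arguments.
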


\begin{proof}
Put $E:=C(\overline{\Omega})$, $C_0(\overline{\Omega}):=\f\{u\in E\colon u=0\text{ on }\Gamma\g\}$ and $Au:=\Delta u$ with \[D(A):=\f\{u\in C_0(\overline{\Omega})\colon\Delta u\in E\text{ distributionally}\g\}.\] In view of \cite[Proposition 14.6]{prato} $A$ generates a contraction analytic semigroup \[e^{At}=\frac{1}{2\pi i}\int\limits_{+C}e^{\lambdaup t}R(\lambdaup,A)\,d\lambdaup\]
on $E$, where $+C$ is a suitable oriented path in the complex plane. Observe that $\overline{D(A)}=C_0(\overline{\Omega})\subsetneq E$. From \cite[Theorem 10.2]{prato} we know that \[R(\lambdaup,A)x=\int\limits_0^{+\infty}e^{-\lambdaup t}e^{At}x\,dt\] for each $x\in E$ and $\lambdaup\in\mathbb{C}$ with $\operatorname{Re}\lambdaup>0$. The formula $S(t):=\int_0^te^{A\tau}\,d\tau$ for $t\geqslant 0$, defines a strongly continuous exponentially bounded family $\{S(t)\}_{t\geqslant 0}\subset{\mathcal L}(E)$ such that $S(0)=0$. In view of \cite[Theorem 10.1]{prato}, the semigroup $\{S(t)\}_{t\geqslant 0}$ is non-degenerate. Clearly, $(0,\infty)\subset\rho(A)$ and $R(\lambdaup,A)=\lambdaup\int_0^\infty e^{-\lambdaup t}S(t)\,dt$. In other words, operator $A$ is the generator of an equicontinuous integrated semigroup $\{S(t)\}_{t\geqslant 0}$.\par Since $U(t,\cdot,x(\cdot))$ is lower semicontinuous,$U(\cdot,\cdot,x(\cdot))$ is ${\mathcal L}(I)\otimes{\mathcal B}(\overline{\Omega})$-measurable and $(I,{\mathfrak L}(I),\ell)$ is $m$-projective, the map $U(\cdot,\cdot,x(\cdot))$ admits a Carath\'eodory-Castaing representation $\z{u_n^x}$ i.e., $U(t,z,x(z))=\overline{\x{u_n^x(t,z)}}$. Let $F\colon I\times E\map E$ be such that $F(t,x):=U(t)x\cdot\co\f(\x{u_n^x(t,\cdot)}\g)$, with "$\cdot$" being the multiplication in the ring $C(\overline{\Omega})$. Since the maps $U(\cdot)\colon I\to E$ and $I\ni t\mapsto u_n^x(t,\cdot)\subset E$ are measurable, the set-valued map $t\mapsto \x{U(t)x\cdot u_n^x(t,\cdot)}$ is measurable. Whence, the convex envelope map $t\mapsto\co\{\x{U(t)x\cdot u_n^x(t,\cdot)}$ is measurable as well. Eventually, the map $F(\cdot,x)$ satisfies assumption $(\F_2)$.\par From $(\uu_4)$ follows that the family $\bigcup\limits_{x\in E}\x{u_n^x(t,\cdot)}$ is equicontinuous. Combined with $(\uu_3)$ it entails the compactness of the map $\bigcup\limits_{x\in M}\x{u_n^x(t,\cdot)}\colon E\map E$ for a.a. $t\in I$. It is routine to check that $\beta(K\cdot M)\<||K||^+\beta(M)$ for $K\subset E$ relatively compact and $M\subset E$ bounded.  Thus, 
\begin{align*}
\beta(F(t,M))&\<\beta\f(\co\f(U(t)M\cdot\f(\bigcup\limits_{x\in M}\x{u_n^x(t,\cdot)}\g)\g)\g)\<\f\Arrowvert\bigcup\limits_{x\in M}\x{u_n^x(t,\cdot)}\g\Arrowvert^+||U(t)||_{\mathcal L}\beta(M)\\&\<\xi(t)\f(1+||\Omega||^+\g)||U(t)||_{\mathcal L}\beta(M),
\end{align*}
a.e. on $I$, by $(\uu_3)$. Hence, $(\F_5)$ holds with $\eta:=\xi\f(1+||\Omega||^+\g)||U(\cdot)||_{\mathcal L}$. \par Since $U(t,\cdot,\cdot)$ is upper hemicontinuous, the sequentiall upper semicontinuity of the map $F(t,\cdot)\colon(E,w)\map(E,w)$ is a straightforward consequence of the Riesz-Markov representation theorem and the Convergence Theorem.\par Observe that \[||F(t,x)||^+\<||U(t)||_{\mathcal L}||x||\,\f\Arrowvert\x{u_n^x(t,\cdot)}\g\Arrowvert^+\,dt\<\f(1+||\Omega||^+\g)||U(t)||_{\mathcal L}\xi(t)(1+||x||),
\]
by $(\uu_3)$. Hence, \eqref{mu} is met.\par We may rewrite equivalently the feedback control problem \eqref{feedback2} as \eqref{P} with $A$ and $F$ as above. Since the assumptions of Corollary \ref{solsetcor} are satisfied, the thesis follows.
\end{proof}



\begin{thebibliography}{99}
\bibitem{arendt} W. Arendt, {\it Vector valued Laplace transforms and Cauchy problems}, Israel J. Math. 59 (1987), 327-352. 
\bibitem{aubin2} J. P. Aubin, A. Cellina, {\it Differential Inclusions}, Springer, Berlin, 1984.
\bibitem{aubin} J. Aubin, H. Frankowska, {\it Set-Valued Analysis}, Birkh\"auser, Boston, 1990.
\bibitem{vath} I. Benedetti, M. V\"ath, {\it Semilinear inclusions with nonlocal conditions without compactness in non-reflexive spaces}, Topol. Methods Nonlinear Anal. 48 (2016), 613-636.
\bibitem{prato} G. Da Prato, E. Sinestrari, {\it Differential operators with non-dense domain}, Ann. Sc. Norm. Super. Pisa Cl. Sci. 14 (1987), 285-344.
\bibitem{blasi} F. De Blasi, {\it On a property of the unit sphere in a Banach space}, Bull. Math. Soc. Sci. Math. Roumanie (N.S.) 21 (1977), 259-262.
\bibitem{fabian} M. Fabian, P. Habala, P. H\'ajek, V. Montesinos, V. Zizler, {\it Banach Space Theory: The Basis for Linear and Nonlinear Analysis}, CMS Books in Mathematics, Springer, 2011.
\bibitem{gorn2} G. Fournier, L. G\'orniewicz, {\it The Lefschetz fixed point theorem for multi-valued maps of non-metrizable spaces}, Fund. Math. 92 (1976), 213-222.
\bibitem{papa} L. Gasi\'nski, N. Papageorgiou, {\it Nonlinear Analysis}, Taylor \& Francis Group, Boca Raton, 2005. 
\bibitem{gorn} L. G\'orniewicz, {\it Topological fixed point theory of multivalued mappings}, Second ed., Springer, Dordrecht, 2006.
\bibitem{himmelberg} C. Himmelberg, {\it Measurable relations}, Fund. Math. 87 (1975), 53-72.
\bibitem[14] {zecca} M. Kamenskii, V. Obukhovskii, P. Zecca, {\it Condensing multivalued maps and semilinear differential inclusions in Banach spaces}, Nonlinear Anal. Appl., vol.7, Walter de Gruyter, Berlin-New York, 2001.
\bibitem{hieber} H. Kellermann, M. Hieber, {\it Integrated semigroups}, J. Funct. Anal. 84 (1989), 160-180.
\bibitem{szufla} I. Kubiaczyk, S. Szufla, {\it Kneser's theorem for weak solutions of ordinary differential equations in Banach spaces}, Publ. Inst. Math. (Beograd) (N.S.) 32(46) (1982), 99-103. 
\bibitem{neu} F. Neubrander, {\it Integrated semigroups and their application to the abstract Cauchy problem}, Pacific J. Math. 135 (1988), 111-155. 
\bibitem{hu} Sh. Hu, N. Papgeorgiou, {\it Handbook of Multivalued Analysis, Volume I:Theory} Kluwer, Dordrecht, Boston, London, 1997.
\bibitem{kunze} M. Kunze, G. Schl\"uchtermann, {\it Strongly generated Banach spaces and measures of noncompactness}, Math. Nachr. 191 (1998), 197-214.
\bibitem{obuh} V. Obukhovskii, P. Zecca, {\it On semilinear differential inclusions in Banach spaces with nondensely defined operators}, J. Fixed Point Theory Appl. 9 (2011), no. 1, 85-100.
\bibitem{regan} D. O'Regan, R. Precup, {\it Fixed point theorems for set-valued maps and existence principles for integral inclusions}, J. Math. Anal. Appl. 245 (2000), 594-612.
\bibitem{pazy} A. Pazy, {\it Semigroups of linear operators and applications to partial differential equations}, Springer-Verlag, New York, 1983.
\bibitem{pietkun} R. Pietkun, {\it Structure of the solution set to Volterra integral inclusions and applications}, J. Math. Anal. Appl. 403 (2013), 643-666.
\bibitem{spanier} E. Spanier, {\it Algebraic Topology}, McGraw-Hill, New York, 1966.
\bibitem{thieme} H. Thieme, {\it ''Integrated  Semigroups'' and Integrated Solutions to Abstract Cauchy Problems}, J. Math. Anal. Appl. 152 (1990), 416-447.
\bibitem{ulger} A. \"Ulger, {\it Weak compactness in $L^1(\mu,X)$}, Proc. Amer. Math. Soc. 113 (1991), 143-149.
\bibitem{vrabie} I. Vrabie, {\it $C_0$-semigroups and applications}, North-Holland, 2003. 
\bibitem{weidmann} J. Weidmann, {\it Linear operators in Hilbert spaces}, Springer-Verlag, New York, 1980.
\end{thebibliography}
\end{document}